\documentclass{amsart}    
\usepackage{amsmath,amsfonts,amssymb,amsopn,amscd}
\usepackage{bbm,wasysym,stmaryrd}
\usepackage{subfigure}
\usepackage{color,colortbl}
\usepackage{hyperref}
\usepackage[normalem]{ulem}
\usepackage{xcolor}
\usepackage{graphicx}
\graphicspath{{./Fig/}}

\renewcommand{\P}{\mathbbm{P}}

\newcommand{\Exp}{\mathbbm{E}}
\newcommand{\E}{\mathcal{E}}
\DeclareMathOperator{\eqlaw}{\stackrel{\mathcal{L}}{=}}

\newcommand{\G}{\mathcal{G}}

\newcommand{\R}{\mathbbm{R}}
\newcommand{\N}{\mathbbm{N}}

\newcommand{\A}{\mathcal{A}}

\newcommand{\M}{\mathcal{M}}
\newcommand{\C}{\mathcal{C}}

\definecolor{orange}{rgb}{1.00,0.50,0.0}

\theoremstyle{plain}
\newtheorem{theorem}{Theorem}[section]
\newtheorem{proposition}[theorem]{Proposition}
\newtheorem{lemma}[theorem]{Lemma}

\theoremstyle{definition}

\theoremstyle{remark}

\begin{document}	
\title[Universality in randomly connected neural networks]{Universality in randomly connected neural networks: old and new on a conjecture of Geman and Hwang}
\date{\today}
\author[J.~Touboul]{Jonathan Touboul}
\address{Department of Mathematics, Brandeis University, Waltham, MA, USA}

\begin{abstract}
Networks with random, heterogeneous interactions are ubiquitous in physics, biology and social sciences. A central question is whether, and how, their collective dynamics depend on the choice of the distribution of the random couplings. Forty years ago, Geman and Hwang formulated a precise version of this \emph{universality} question for randomly connected neural networks. We come back to these conjectures, review what was proven since, provide a few new extensions and propose new conjectures in this paper. For $n$ neuron networks with couplings with a mean scaling as $1/n$ and light enough tails, two classes of limits are expected. For \emph{weakly heterogeneous} couplings, with variance $o(1/n)$, the network is predicted to converge to a McKean--Vlasov equation, and a broad literature has established this under general assumptions, including for interacting diffusions on random graphs and for non-exchangeable systems. For \emph{strongly heterogeneous} couplings, with variance of order $1/n$, the limit is a self-consistent, non-Markovian equation driven by a Gaussian process whose law depends on the solution. In this domain, after landmark results for Gaussian couplings, universality was only recently established for linear interactions, while the continuous-time nonlinear case relevant to neuroscience remained open. We review these results and the main techniques involved, and propose some extensions towards the proof of the conjecture. In the weakly heterogeneous regime, we prove quenched convergence on every time interval under a fourth-moment assumption only, and convergence in probability under an assumption slightly stronger than a second moment. In the strongly heterogeneous regime, we prove convergence for nonlinear interactions in continuous time, averaged over sub-Gaussian couplings and on every time interval, by a relative-entropy comparison with Gaussian couplings. We then extend the conjecture to heavy-tailed couplings, and propose a limit driven by a self-consistent $\alpha$-stable process, for which we prove existence for every $\alpha\in(0,2)$ and uniqueness for $\alpha\in[1,2)$, and provide extensive numerical evidence in all three cases. Altogether, forty years after its formulation, Geman and Hwang's program remains an active field of research, with several basic questions open.
\end{abstract}

\maketitle

\bigskip

\hrule

\tableofcontents

\medskip 

\hrule

\section{Introduction}

Interacting systems with random interactions are ubiquitous in complex systems describing the coordinated interaction of large ensembles of agents. They model systems ranging from neuronal networks~\cite{sompolinsky-crisanti-etal:88,ostojic:14} to spins of atoms in ferromagnets~\cite{mezard1987spin} or the flashing of fireflies~\cite{strogatzSync,strogatz2000kuramoto}, as well as ecological communities~\cite{may:72,allesina-tang:12,bunin:17}, social dynamics~\cite{castellano-fortunato-loreto:09}, and large artificial neural networks, at initialization~\cite{hanin:23} and during training~\cite{mei-montanari-nguyen:18,sirignano-spiliopoulos:20}. In such systems, the dynamics of each individual element as well as the interaction with the others are generally heterogeneous. These heterogeneities may characterize differences in the element itself, or in the topology of the network; as such, they are quenched during the dynamics of the system, and are generally referred to as \emph{disorder}, in contrast with noise that can induce random \emph{dynamical} fluctuations. It is well-known that heterogeneities may have a dramatic effect on the collective dynamics. For instance, a classical result due to Kuramoto~\cite{kuramoto-arakai} showed that a fully-coupled network of oscillators may synchronize when the disorder in intrinsic frequencies is low, and lose synchrony above a certain level of disorder. In the context of neural networks, Sompolinsky and co-authors showed that the level of disorder in the coupling strength between identical neurons has a dramatic impact on the collective solution, and controls a transition between trivial dynamics (where all neurons converge to rest) and chaos~\cite{sompolinsky-crisanti-etal:88}. This transition gave rise to ample literature in neuroscience and mathematics, and explored various directions. Some of these focused on the possible contribution of the distribution of eigenvalues of the random coupling matrix of the dynamics, implicitly suggesting that the universality properties of the spectrum of random matrices~\cite{ginibre:65,bai:97,tao-vu:10}, could reflect a universality in the behavior of neural networks. Among works related to these ideas, authors considered the spectrum of connectivity matrices where the statistics of the random couplings reflect a population structure~\cite{rajan-abbott:06,aljadeff-stern-sharpee:15,hermann-touboul:12,cabana-touboul:13}, when the dynamics is bistable~\cite{stern-sompolinsky-abbott:14} or described by spiking models~\cite{kadmon-sompolinsky:15}. It is widely admitted in the physics literature that these phenomena may rely only on a few statistical properties of the disorder distribution, and that large universality classes of behaviors exist where the details of the distribution of the disorder do not matter in the large system size limit. This universality has found strong support for disorder at the individual dynamics level. For instance, in the Kuramoto model and with symmetric distributions of intrinsic frequencies, the phase transition only depends on the amplitude of the distribution at the average value (see e.g.~\cite{strogatz2000kuramoto} for a review). Regarding disorder in the coupling weights, much less is understood mathematically, but universality of behaviors is widely admitted, as expressed by M\'ezard, Parisi and Virasoro in~\cite[p.9]{mezard1987spin} in the context of the replica symmetric solution of the Sherrington-Kirkpatrick spin-glass model:
\begin{center}
\begin{minipage}{0.8\textwidth}
	``\emph{it is usually supposed that the distribution of the J's \emph{[coupling coefficients]} is Gaussian, although the same results can also be found for non-Gaussian distributions [...]: as far as the J's are quantities of order $1/n^{1/2}$, the results depend only on the first two moments of the distribution~\cite{sherrington-kirkpatrick:75}.}''
	\end{minipage}
	\end{center}
In the early 1980s in a remarkably ahead-of-time work, Geman~\cite{geman:82}, and Geman and Hwang~\cite{geman:1982b}, undertook to formulate and study mathematically this conjecture in the context of randomly connected neural networks. In this program, they noted that while dynamics can often be predicted using well established heuristic arguments consisting in considering that elements of a large-scale are `nearly independent' (Boltzmann's molecular chaos, or \emph{Sto\ss zahlansatz}), mathematically proving these conjectured limit raises substantial mathematical challenges. Quoting from~\cite{geman:1982b},
 \begin{center}
 \begin{minipage}{0.8\textwidth}
``\emph{Some of the results [are] surprisingly difficult to prove, but they are all easily anticipated from a ``chaos hypothesis'': i.e. an assumption of near independence for the components of the solutions of large systems of weakly coupled equations.}''
	 \end{minipage}
	 \end{center}

Geman and Hwang's statement has passed the test of time: over the forty years after its formulation, it has generated significant research, and while substantial progress has been made in the last decade (see Sections~\ref{sec:WeakLit}, \ref{sec:StrongLit} and~\ref{sec:HeavyLit}), the picture remains incomplete, and new conjectures extending Geman and Hwang's are still unexplored. The universality raises deep mathematical questions and has the potential to stimulate the development of new mathematical tools. 

The aim of the present paper is twofold. First, this paper surveys mathematical results related to this conjecture. We formulate them within an overarching formalism, and review some marking rigorous results obtained since. Although these questions extend to a variety of systems, our paper focuses on the dynamics of randomly connected neuronal networks with noise, for both historical (these systems are similar to the Geman-Hwang equations) and application reasons. The classical equation we shall study here is a firing-rate equation describing the dynamics of a network composed of $n$ neurons with random synaptic (more generally referred to as \emph{coupling}) weights~\cite{amari:71,sompolinsky-crisanti-etal:88} (see~\cite{cessac:10} for a review) through the activity of each neuron (a scalar variable $X^{i}_t$, its membrane potential, with firing rate $S(X^i_t)$):
\begin{equation}\label{eq:network}
  dX^{i,n}_{t} = \Big( f(X^{i,n}_{t}) +\sum_{j=1}^{n} J_{ij} S(X^{j,n}_{t})\Big)\,dt + \lambda dW^{i}_{t}
\end{equation}
The Lipschitz-continuous map\footnote{A variety of works sought to generalize results to $f$ non-Lipschitz, and seek conditions for the results to hold in these contexts. We do not consider these works and associated methods in the present manuscript.} $f$, generally taken to be equal to minus identity, describes the intrinsic dynamics of the {membrane potential}, generally an exponential return to a baseline voltage, the stochastic term accounts for the intrinsic fluctuations of the electrical cells activities (modeled as independent Brownian fluctuations with diffusion coefficient $\lambda$), and interactions occur through the firing rates {$S(X^j_t)$} of other cells, {where} the sigmoid $S$ is assumed to {be} a bounded Lipschitz continuous map. The interaction coefficients $J_{ij} \in \R$ are independent random variables\footnote{Other works have considered dropping the independence assumptions. These questions are not in the scope of this work.} whose probability distribution may depend on the network size. We discuss the possible limits of the above system as a function of (i) the tails of the probability distribution of the interaction coefficients and (ii) the scaling of the mean and possibly the variance (when finite) of the interaction coefficients as $n$ increases. {Second, we establish the following results towards the conjecture: (i)~in the weakly heterogeneous regime, quenched convergence for every scaling $n=o(a_n)$ under a fourth moment, and convergence in probability under slightly more than a second moment (Propositions~\ref{prop:OpNorm} and~\ref{prop:Truncation}); (ii)~in the strongly heterogeneous regime, averaged convergence for nonlinear interactions in continuous time, for independent sub-Gaussian couplings, on every time interval and with logarithmic rate (Theorem~\ref{thm:Convergence4}); (iii)~for heavy-tailed couplings, a formulation of the conjecture with a limit driven by a self-consistent $\alpha$-stable process, together with its existence for $\alpha\in(0,2)$ and its uniqueness for $\alpha\in[1,2)$ when $f$ is affine (Theorem~\ref{thm:HeavyWellPosed}); (iv)~numerical simulations supporting the conjecture in the three regimes, including for couplings that no theorem covers.} {Section~\ref{sec:Physics} recalls the heuristic proposed by Geman and Hwang, formulates the conjecture in three regimes (weakly heterogeneous, strongly heterogeneous and heavy-tailed couplings), and provides numerical simulations illustrating the three regimes conjectured. The three sections that follow, Sections~\ref{sec:Weak}, \ref{sec:StrongHet} and~\ref{sec:HeavyTails}, focus on each of the regime individually: we review the literature, state the new results of this paper and discuss the open problems. All proofs are given in the appendices.

\section{Sto\ss zahlansatz, universal limits and a generalized conjecture}\label{sec:Physics}

As highlighted by Geman and Hwang~\cite{geman:82,geman:1982b} and discussed in~\cite{cessac-samuelides:07b}, Boltzmann's \emph{Sto\ss zahlansatz}, or molecular chaos hypothesis, suggests that as $n\to\infty$, neurons become mutually independent and independent of the couplings, possibly because the influence of any given neuron on another one is overwhelmed by the mean-field interaction generated by the rest of the network. Mathematically, this corresponds to \emph{propagation of chaos}: for particle systems with independent initial conditions, any finite subset of particles becomes independent in the limit, a property that is equivalent, for exchangeable systems, to the convergence of the empirical measure~\cite[Proposition~2.2]{sznitman:89} (see~\cite{kac:56,mckean:66,jabin-wang:17,chaintron-diez:22} for origins and reviews). Loosely speaking, assuming that this property holds approximately, the interaction term $\sum_{j}J_{ij}S(X^{j,n}_t)$ is a sum of (approximately) independent and identically distributed variables, and its limit follows from a law of large numbers, a central limit theorem or a generalized central limit theorem, depending on the scaling and on the tails of the couplings.

For couplings with finite variance, we write $J_{ij}=\frac{\bar J}{n}+\frac{\sigma}{\sqrt{a_n}}\hat J_{ij}$, with $\hat J_{ij}$ centered with unit variance and $a_n\to\infty$; the scaling $1/n$ of the mean keeps the mean input finite, then
\[
\sum_{j=1}^n J_{ij} S(x^j_t) = \underbrace{\frac{\bar{J}}{n} \sum_{j=1}^n S(x^j_t)}_{\longrightarrow\ \bar{J}\,\Exp [ S(\bar{x}_t) ]} + \underbrace{\frac{\sigma}{\sqrt{a_n}} \sum_{j=1}^n \hat{J}_{ij} S(x^j_t)}_{\text{variance }\approx\ \sigma^2\frac{n}{a_n}\Exp[ S(\bar{x}_t)^2]}.
\]
The first term converges by the law of large numbers, and the fate of the second one depends on the ratio $n/a_n$. It vanishes when $n=o(a_n)$, the \emph{weakly heterogeneous} regime, in which the limit is the McKean--Vlasov equation of item~1 of the conjecture below. It converges, when $a_n=n$, to a centered Gaussian process with covariance $\sigma^2\Exp[S(\bar x_t)S(\bar x_s)]$, the \emph{strongly heterogeneous} regime, in which the limit is the self-consistent equation of item~2. For heavy-tailed couplings, the generalized central limit theorem replaces the Gaussian input by an $\alpha$-stable one.

At this heuristic level, the limits only depend on whether the sums of the couplings satisfy a law of large numbers or a (generalized) central limit theorem, and not on the precise distribution of the couplings: this is the universality conjecture. Mathematically, one needs to justify these limits under no extra assumption on the couplings, and to prove that the limit equations, which are non-Markovian, implicit equations in the space of stochastic processes, are well posed. In physics, these limits are derived with the Martin--Siggia--Rose generating functional or the dynamical cavity method~\cite{martin-siggia-rose:73,sompolinsky-zippelius:82,crisanti-sompolinsky:87,crisanti-sompolinsky:18,helias-dahmen:20}, the formalism that also underlies the dynamical theory of mean-field spin glasses~\cite{cugliandolo-kurchan:93}.

\subsection{Mathematical formulation of a conjecture}\label{sec:Conjecture}
Based on this ansatz, we can thus conjecture the following classes of universality for the limit system~\eqref{eq:network} as $n\to \infty$:

{\noindent\textbf{(A) Limiting distribution.} The limit law $Q$ depends on the regime as follows.}
\begin{enumerate}
\item {\bf Weakly heterogeneous interactions} When the row sums of the $J_{ij}$ satisfy a law of large numbers, i.e.\ $\sum_j J^n_{1j}$ converges in probability to a deterministic limit $\bar J$ (for instance, when the $J^n_{ij}$ have mean $\bar J/n$ and variance $o(1/n)$), the limit is given by a McKean--Vlasov type equation:
	\begin{equation}\label{eq:WeakMFequation}
	  dx_{t} = \Big( f(x_{t}) +\bar{J} \Exp[S(x_{t})]\Big)\,dt + \lambda dW_{t}. 
	\end{equation}
\item {\bf Strongly heterogeneous interactions} When the row sums of the $J_{ij}$ satisfy a central limit theorem, i.e.\ $\sum_j J^n_{1j}$ converges in law to a non-degenerate Gaussian variable $\mathcal N(\bar J,\sigma^2)$ (for instance, when the $J^n_{ij}$ have mean $\bar J/n$ and variance $\sigma^2/n$), the system converges towards the non-Markovian equation
	\begin{equation}\label{eq:StrongMFequation}
	  dx_{t} = \Big( f(x_{t}) +U^x_t\Big)\,dt + \lambda dW_{t}
	\end{equation}
	with $U^x_t$ a Gaussian process{, independent of $(x_0,W)$,} with mean $\bar J \mathbb{E}[S(x_t)]$ and covariance structure $K^x(t,s)=\sigma^{2}\mathbb{E}[S(x_{t})S(x_{s})]$ both depending on the solution to the equation. 
	\item {\bf Heavy-tailed interactions} If $J_{ij}=\frac{\mu}{n}+\frac{g_{ij}}{n^{1/\alpha}}$, where $\mu\in\R$ and the $g_{ij}$ are i.i.d.\ with a symmetric law ($g_{ij}$ and $-g_{ij}$ have the same distribution) and $\P(|g_{ij}|>x)\sim x^{-\alpha}$ for some $\alpha\in(0,2)$ (so that they belong to the domain of attraction of a symmetric $\alpha$-stable law), the system should converge towards the non-Markovian equation
\begin{equation*}
  dx_{t} = \Big( f(x_{t}) +\mu\,\Exp[S(x_t)]+Z^x_t\Big)\,dt + \lambda\, dW_{t},
\end{equation*}
where, denoting by $Q$ the law of the solution, the process $(Z^x_t)_{t\in[0,T]}$ is independent of $(x_0,W)$ and is a symmetric $\alpha$-stable process indexed by time, with finite-dimensional distributions
\begin{equation*}
\Exp\Big[e^{\,i\sum_{k=1}^K\theta_k Z^x_{t_k}}\Big]=\exp\Big(-c_\alpha \Exp\left[\Big|\sum_{k=1}^K\theta_k\, S(x_{t_k})\Big|^\alpha\right]\Big),
\end{equation*}
where $c_\alpha$ is the constant of the generalized central limit theorem for $g_{ij}$~\cite{gnedenko-kolmogorov:54} ($c_\alpha=\int_0^\infty u^{-\alpha}\sin u\,du=\Gamma(1-\alpha)\cos\frac{\pi\alpha}2$ for $\alpha\neq1$ and $c_1=\frac\pi2$
). 

This process has several other representations that we recall here. Equivalently, $Z^x_t=\int_{\C}S(y_t)\,M_\alpha(dy)$ for a symmetric $\alpha$-stable random measure $M_\alpha$ with control measure $c_\alpha Q$~\cite{samorodnitsky-taqqu:94}. It can also be defined as the LePage series~\cite{lepage-woodroofe-zinn:81}, which will be our preferred representation here for numerics and proofs:
\begin{equation*}
Z^x_t \eqlaw C_\alpha\sum_{k\geq 1}\varepsilon_k\,\Gamma_k^{-1/\alpha}\,S(y^k_t),
\end{equation*}
with $(\Gamma_k)_k$ the arrival times of a unit-rate Poisson process, $(\varepsilon_k)_k$ i.i.d. Rademacher signs and $(y^k)_k$ i.i.d. with law $Q$, all independent, and $C_\alpha$ a constant, equal to $1$ with our normalization.
\end{enumerate}
We note that the putative limit listed in the heavy-tailed conjecture is a complex stochastic process of a new type. We discuss existence and uniqueness of solutions in Section~\ref{sec:HeavyTails}. 

\medskip
\noindent{\textbf{(B) Mode of convergence.} Denote by $\hat\mu_n=\frac1n\sum_{i}\delta_{X^{i,n}}$ the empirical measure, by $Q^i_n(J)$ and $Q^{ij}_n(J)$ the laws of $X^{i,n}$ and of $(X^{i,n},X^{j,n})$ given the couplings $J$. In all three regimes, $\hat\mu_n\to Q$ in probability, and since the system is exchangeable, the averaged over the couplings law of $(X^{1,n},\dots,X^{k,n})$ converges to $Q^{\otimes k}$ for every fixed $k$~\cite[Proposition~2.2]{sznitman:89}. In terms of quenched results, we expect the convergences below in probability with respect to the couplings. Noting $d$ the bounded-Lipschitz distance\footnote{Averaged statements, for $\E_J[Q^i_n(J)]$ and $\E_J[Q^{ij}_n(J)]$, can also be derived from results below and convexity of $d$}, we conjecture, for fixed neurons $i\neq j$:}
\begin{enumerate}
	\item {In the two light-tailed regimes (items~1 and~2 of~(A)), the disorder is averaged out at the level of each neuron (self-averaging property), and propagation of chaos holds given the couplings: $Q^i_n(J)\to Q$ and $Q^{ij}_n(J)\to Q\otimes Q$.}
	\item {{In the heavy-tailed regime, we do not expect self-averaging: the input of each neuron is carried by finitely many large weights, so that $Q^i_n(J)$ should converge to a random law rather than to $Q$. Since two fixed neurons are joined by a large weight with vanishing probability, we expect $d\big(Q^{ij}_n(J),\,Q^i_n(J)\otimes Q^j_n(J)\big)\to0$ in probability, with random marginals.}}
\end{enumerate}

\subsection{Numerical exploration of the conjecture}\label{sec:Numerics}
We started by evaluating the conjecture numerically in a variety of situations, including those covered by available theorems in the weakly and strongly heterogeneous cases (Gaussian or light-tailed couplings) and situations that no theorem covers. In each case, we quantify the distance between finite networks and the predicted limit equation, and estimate how these compare to finite-size biases and sampling errors. More detail on numerical methods, algorithms and codes is given in Appendix~\ref{app:numerics}.

\begin{figure}[h]
	\includegraphics[width=.8\textwidth]{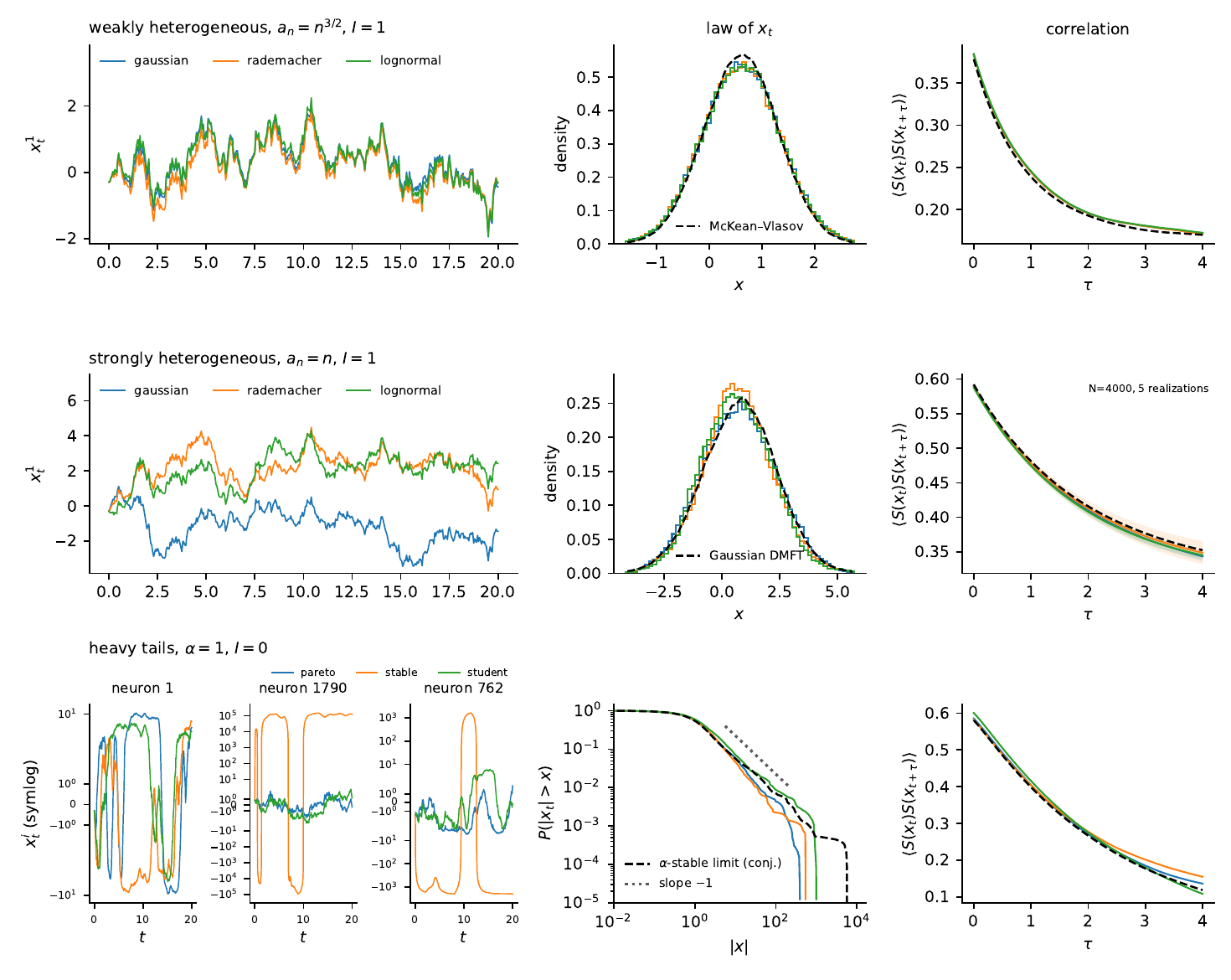}
		\caption{Trajectories and statistics of networks~\eqref{eq:network} in each of the three conjecture regimes. Colored curves: simulations with identical initial conditions and Brownian motions but distinct coupling distributions. Top: weakly heterogeneous couplings $J_{ij}=-1/n+2 g_{ij}/\sqrt{a_n}$ with $a_n=n^{3/2}$, $g_{ij}$ centered with unit variance (Gaussian, Rademacher, centered log-normal). Middle: same laws with $a_n=n$ (strongly heterogeneous regime). Bottom: heavy-tailed couplings $J_{ij}=g_{ij}n^{-1/\alpha}$, $\alpha=1$, with $g_{ij}$ symmetric Pareto, symmetric $\alpha$-stable or Student, normalized to the same tail $\mathbb{P}(|g|>x)\sim x^{-\alpha}$. Left: trajectory of neuron~1; bottom left: neuron~1 and the two neurons whose trajectories differ most between the three laws, in symmetric logarithmic scale. Center: law of $x^i_t$, pooled over neurons and $t\in[10,20]$ (bottom: tail distribution in log-log scale). Right: correlation function $\langle S(x^i_t)S(x^i_{t+\tau})\rangle$. Dashed black: numerical solution of the corresponding limit (McKean--Vlasov equation~\eqref{eq:WeakMFequation}, Gaussian self-consistent equation~\eqref{eq:StrongMFequation}, conjectured $\alpha$-stable equation of Section~\ref{sec:HeavyTails}). Parameters: $n=2\,000$ neurons with {$f(x)=-x+I$}, $S=\tanh$, $\lambda=1$, $I=1$ (top two rows) or $I=0$ (bottom row), and strong-regime correlation panel: $n=4\,000$, averaged over $5$ realizations of the couplings.}	
		\label{fig:universality}
\end{figure}

Figure~\ref{fig:universality} shows a few trajectories of the systems in the three regimes of universality, where, to highlight the impact of the coupling weights distribution, trajectories are simulated with the same initial conditions and Brownian motions. In the weakly heterogeneous regime, all trajectories converge to the same solution and the impact of the random part of the couplings vanishes. In the strongly heterogeneous regime, the trajectory of a given neuron depends on the realization of the weights, illustrating the impact of the random part of the couplings, but the law of the population and thus its correlation function do not depend on the choice of the coupling distribution. In particular, it coincides with the Gaussian self-consistent limit. With heavy tailed couplings, individual trajectories differ even more, the law of $x_t$ has power-law tails of exponent $\alpha$, and its bulk and correlation function again agree across the three laws and with the conjectured limit.

Figure~\ref{fig:finitevar} makes these comparisons more quantitative and estimates the distance between various network simulations and direct simulations of the predicted limit as a function of $n$. The predicted limits are given by self-consistent stochastic equations that were solved numerically using damped Picard iterations, estimating the self-consistent terms of a given iterate from the statistics of $M$ independent copies of the previous iterate equation (either their mean in case 1, their mean and covariance of the Gaussian input in case 2, or constructing a truncated LePage series for heavy-tailed couplings). Distances between a network simulation and the limit were estimated through the Wasserstein distance $W_1$ for light tails and the Kolmogorov--Smirnov distance for heavy tails. For references, we estimated a \emph{sampling level}, which is the distance obtained when the network is replaced by $n$ trajectories of an independent solution of the limit (see details in Appendix~\ref{app:numerics}). 

\begin{figure}
	\includegraphics[width=\textwidth]{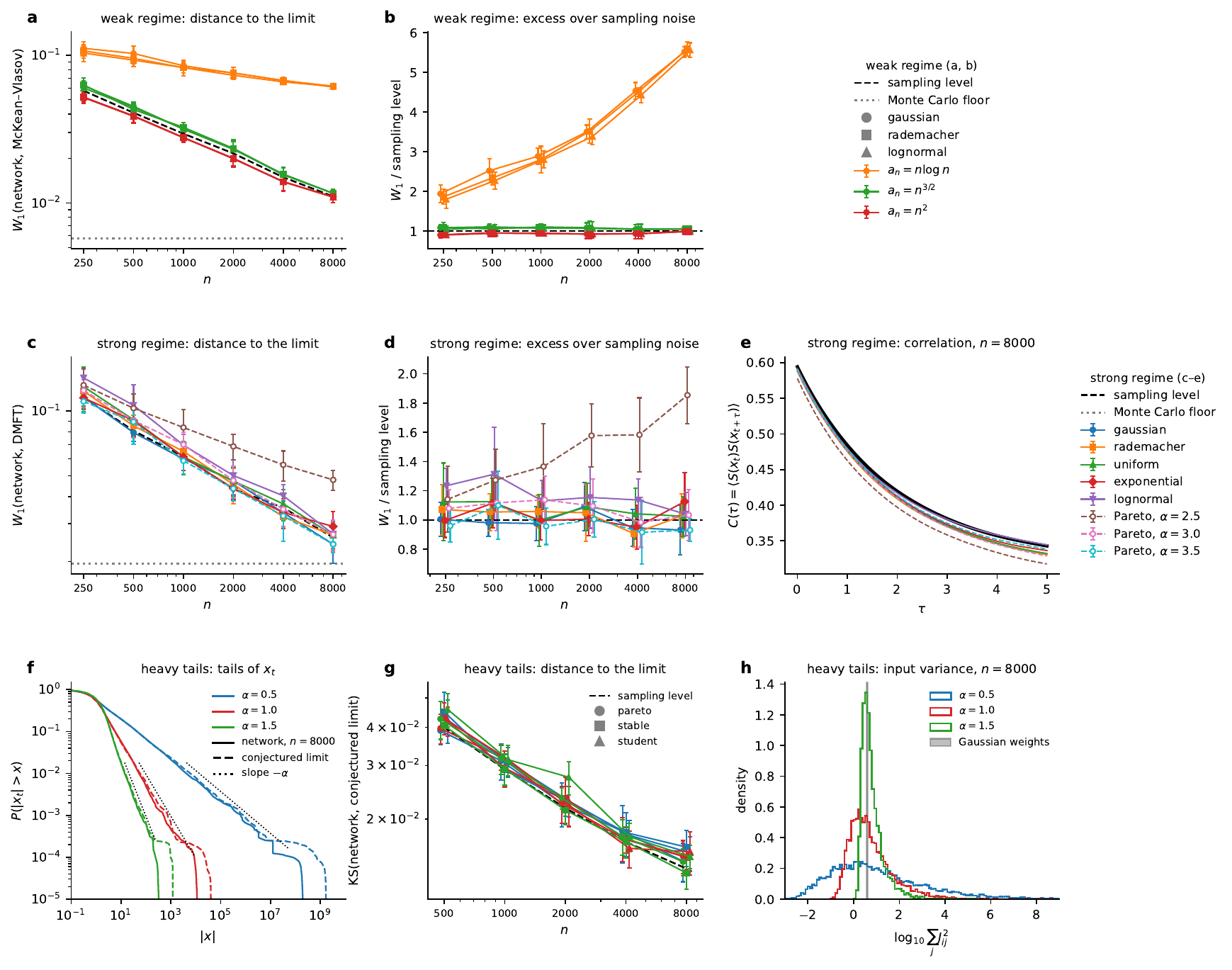}
	\caption{Tests of the conjecture in the three regimes: (a-b) Weakly heterogeneous regimes. (a) Wasserstein distance between the network and the McKean--Vlasov solution, $a_n\in\{n\log n,n^{3/2},n^2\}$ and three laws of the couplings (markers); dashed: sampling level; dotted: Monte Carlo floor, estimating the fluctuation in the estimate of the limit. (b)~Ratio of the distance to the sampling level. (c-e) Strongly heterogeneous regime ($a_n=n$), for five light-tailed laws and symmetric Pareto couplings with $\alpha\in\{5/2,3,7/2\}$ normalized to unit variance; (c)~distance to the Gaussian self-consistent limit; (d)~ratio to the sampling level; (e)~correlation functions at $n=8\,000$ and in the limit. (f-g) {Heavy tails regime} ($J_{ij}=g_{ij}n^{-1/\alpha}$, {$I=\bar J=0$, }$\alpha\in\{1/2,1,3/2\}$): (f)~Tails of $x_t$ for one network of size $n=8\,000$ with Pareto couplings (solid) and for the conjectured limit (dashed); dotted: slope $-\alpha$. (g)~Kolmogorov--Smirnov distance between networks with Pareto, stable and Student couplings {(markers; colors as in f)} and the conjectured limit; {dashed: sampling level.} (h)~Distribution across neurons of the input variance $\sum_jJ_{ij}^2$, $n=8\,000$; gray band: range for Gaussian couplings with $\sigma=2$.}
	\label{fig:lighttails}\label{fig:finitevar}
\end{figure}

\begin{enumerate}
\item \emph{Weakly heterogeneous regime:} We considered $J_{ij}=\bar J/n+\sigma g_{ij}/\sqrt{a_n}$ with $\bar J=-1$, $\sigma=2$, and $g_{ij}$ centered with unit variance for 5 distributions: Gaussian, Rademacher, uniform, centered exponential, centered log-normal {(three of which are shown in Figure~\ref{fig:lighttails}a)}, and three scalings: $a_n=n^{3/2}$, $a_n=n^2$ and $a_n=n\log(n)$. 

In all cases, we found that the distance to the McKean--Vlasov solution decreases as $n$ increases {(Figure~\ref{fig:lighttails}a)}, with no impact of the distribution of the coupling weights on the convergence other than the scaling. Scalings $a_n=n^{3/2}$ and $a_n=n^2$ converged very fast to the limit, with a rate comparable to the sampling level, meaning that already at these sizes, the network is as close to the McKean--Vlasov limit as $n$ independent copies of it {(panel b)}\footnote{It remains within $\pm10\%$ of the ratios obtained with $10$ realizations of the couplings.}.
For $a_n=n\log n$, the Wasserstein distance to the McKean-Vlasov limit also decreases, supporting the conjecture of a convergence, but it remains much larger than in the other two scalings, and in particular much slower than the sampling level, suggesting that the random part of the input vanishes slower than the finite-size effects.

\item \emph{Strongly heterogeneous regime} We next considered $J_{ij}=\bar J/n+\sigma g_{ij}/\sqrt{n}$ for 8 centered distributions of the $g_{ij}$, including lognormal distributions and power-law distributions with tails decaying faster than $1/x^\alpha$ for $\alpha>2$ that are not covered by any theorems. Consistently with the conjecture, we found that all distributions converged to the predicted limit (the dynamical mean-field theory, DMFT), and they were all close to the sampling level {(panels c and d)}\footnote{Between $0.91$ and $1.13$ for the four laws with exponential moments; for the log-normal law, the ratio decreases from $1.31$ to $1.04$ at $n=8\,000$, with power-law distributions remaining farther in particular for $\alpha$ close to {the boundary $\alpha=2$ of the finite-variance class}. For $\alpha=5/2$, the distance to the limit decreases, but more slowly than the sampling {level.}}. Moreover, the correlation functions agree with the limit (panel e), with an error decreasing like $n^{-1/2}$ (Figure~\ref{fig:strongsupp}a). Of note, the time horizon chosen, $T=20$, is much longer than the correlation time{, and no deviation appears at long times}. Figure~\ref{fig:strongsupp}b illustrates self-averaging: the fluctuations of the population average $n^{-1}\sum_iS(x^i_T)$ from one realization of the couplings to another decrease like $n^{-1/2}$. We also note that the largest coupling grows like $n^{2/\alpha-1/2}$, up to about $19$ for $\alpha=5/2$ and $n=8\,000$ (Figure~\ref{fig:strongsupp}d), so that the operator norm of $J$ diverges. {Note that the fourth-moment assumption of Proposition~\ref{prop:OpNorm} below (weakly heterogeneous regime) fails for $\alpha<4$.} The spectral radius of {$J-\bar J/n$}, however, stays at $\sigma=2$ (Figure~\ref{fig:strongsupp}e), in agreement with the fact that the spectral radius of a matrix with i.i.d.\ entries converges under a mere second moment assumption~\cite{bordenave-caputo-chafai-tikhomirov:18}. 
\item \emph{Heavy tail regimes}. Figure~\ref{fig:lighttails}f--h tests numerically the conjecture on heavy-tailed networks, for couplings $J_{ij}=g_{ij}n^{-1/\alpha}$, with symmetric Pareto, $\alpha$-stable or Student $g_{ij}$ normalized to the same tail $\P(|g_{ij}|>x)\sim x^{-\alpha}$ and $\alpha\in\{1/2,1,3/2\}$. We confirmed the consistency of the network distribution with the conjectured limit, both considering the distribution (and noting, in particular, that the tails of $x_t$ decay with exponent $\alpha$ over more than two decades, panel f), and by computing the Kolmogorov--Smirnov distance between the network and the conjectured limit, remaining very close to the sampling level of the theoretical limit for all values of $\alpha$ and all three laws (panel g). Moreover, we analyzed the variance of the input received by neurons, and found, as expected, that it spreads several orders of magnitude (panel h). This suggests that the network may not {be} self-averaging, a point we make more precise in Section~\ref{sec:HeavyTails}: each neuron sees a different environment, and only the population as a whole follows the limit.
\end{enumerate}

\subsection{Mathematical setting}\label{sec:Setting}
{The rigorous results of Sections~\ref{sec:Weak} to~\ref{sec:HeavyTails} use the following setting and notation.}

The class of systems we are interested in have two levels of randomness: networks are heterogeneous, thus associated to a quenched disorder in the coupling coefficients, and moreover the dynamics are stochastic. To account for these two levels of randomness, we consider two probability spaces:

\begin{itemize} 
\item the filtered probability space $\big(\Omega, \mathcal{F}, (\mathcal{F}_t)_{t \in [0,T]}, \P \big)$ associated with the stochastic dynamics: the random initial conditions of the system and the Brownian paths belong to this space. 
\item $\big(\tilde{\Omega}, \mathcal{F}_J, \mathcal{P}_J \big)$ is the probability space associated with the random environment. The random variables $J_{ij}$, $i,j \in [\![1,n]\!]$ are $\big(\tilde{\Omega}, \mathcal{F}_J, \mathcal{P}_J \big)$- random variables. They are said \textit{frozen} in the sense that they do not vary during the time evolution of the dynamics.
\end{itemize}

We will denote by $\Exp$ the expectation over $\P$, and $\E_J$ that over $\mathcal{P}_J$.

Sample trajectories are continuous real functions of $[0,T]$, so that the probability distribution of $X^i_t$ belongs to $\M_1^+(\C)$ the set of probability measures on the $\C:=\C([0,T],\R)$. We work under the following assumptions:
\begin{itemize}
	\item $f$ is a {$K$}-Lipschitz-continuous map, typically $f(x)=-x$. 
	\item $S$ is a {$K$-}Lipschitz-continuous bounded map, typically a sigmoid $S(x)=\tanh(x)$.
\end{itemize}

For any measure $\mu \in \M_1^+(\C)$, level of disorder $\sigma \in \R_+$ and mean coupling $\bar{J}\in\R$, we introduce the following maps respectively defined on $[0,T]^2$ and $[0,T]$:
\begin{equation*}
\begin{cases}
  K_{\mu,\sigma}(s,t)&:=\displaystyle{\sigma^2 \int_{\C} S(x_t)S(x_s) d\mu(x)} ,\\
  m_{\mu}(t) &:= \displaystyle{\bar{J} \int_{\C} S(x_t)d\mu(x)}.
\end{cases}
\end{equation*}

We shall consider the system with iid initial conditions with law $\mu_0$ a $\big(\Omega, \mathcal{F}, \P \big)$ random variable. The stochastic dynamics of the $n$-neuron network are thus given by the equation:
\begin{equation}\label{eq:InteractingSDE}
\left\{
  \begin{array}{ll}
   dX^i_t = \Big(f(X^i_t) + \sum_{j=1}^n J_{ij} S(X^j_t)\Big) dt + dW^i_t \\
   (X^i_0)_{i=1, \ldots, n} \eqlaw \mu_0^{\otimes n},
  \end{array}
\right.
\end{equation} 
{(that is, $\lambda=1$ in~\eqref{eq:network}),} where the $J_{ij}$ are independent random variables (i.i.d.\ unless stated otherwise) with mean $\frac{\bar{J}}{n}$ and variance $\frac{\sigma^2}{a_n}$, where $(a_n)$ is a diverging sequence. To split the contributions of average coupling and fluctuations, it will be useful to denote $\hat{J}_{ij} := \sigma^{-1}\sqrt{a_n} \big(J_{ij}-\frac{\bar{J}}{n}\big)$ for general distributions of $J_{ij}$; these are centered with unit variance, so that $J_{ij}=\frac{\bar J}{n}+\frac{\sigma}{\sqrt{a_n}}\hat J_{ij}$, and the level of disorder $\sigma$ always appears explicitly outside the sums. When the distribution of the $J_{ij}$ is Gaussian, we will rather denote this centered variable $\tilde{J}_{ij}$.

It is a classical result that \eqref{eq:InteractingSDE} admits a unique weak solution (or a strong solution when the initial conditions are given by prescribed random variable). We will denote by $Q^{a_n}_{\sigma}(\hat{J})$ this solution restricted to the $\sigma$-algebra $\G_T=\sigma(X^i_s, 0 \leq s \leq T, 1 \leq i \leq n)$, or $Q^{a_n}_{\sigma}(\tilde{J})$ for Gaussian interactions. These laws represent the \emph{quenched} behaviors: they depend on the realization of the disorder, and are probability measures on $\C^n$, and when they are well-defined, we will note the averaged laws $\hat{Q}^{a_n}_{\sigma}:=\E_J\big(Q^{a_n}_{\sigma}(\hat{J})\big)$ and $\tilde{Q}^{a_n}_{\sigma}:=\E_J\big(Q^{a_n}_{\sigma}(\tilde{J})\big)$. {When $a_n=n$, we simply write $\hat Q^n_\sigma$ and $\tilde Q^n_\sigma$.} 

\section{Weakly heterogeneous couplings}\label{sec:Weak}
In this section, the couplings are $J_{ij}=\frac{\bar J}{n}+\frac{\sigma}{\sqrt{a_n}}\hat J_{ij}$ with $n=o(a_n)$ and $\hat J_{ij}$ i.i.d, centered couplings with unit variance. {The conjecture states that the empirical measure $\hat\mu_n$ converges to $Q_0$}, where $Q_0$ is the solution of the implicit mean-field (McKean--Vlasov) equation:
\begin{equation}\label{eq:MVlimit}
\left\{
  \begin{array}{ll}
   d\bar{X}_t = \Big( f(\bar{X}_t) + m_{Q_0}(t) \Big) dt + {dW_t} \\
   \bar{X}_0 \eqlaw \mu_0
  \end{array}
\right.
\end{equation}

\subsection{Literature review}\label{sec:WeakLit}
Along with the original conjecture, {Geman~\cite{geman:82}} proved a number of results in the context of system~\eqref{eq:network}  with $\lambda=0$. {In the weakly heterogeneous regime,} {he} demonstrated that the system converges towards the deterministic dynamical system 
\[\frac{dx}{dt}=-x + \bar{J} S(x)\]
as soon as the coefficients $J_{ij}$ are iid with mean $\bar{J}/n$ and with a distribution having an analytic Fourier transform at zero (i.e., only depending on the first moment regardless of the details of their distribution). The degenerate case $\sigma=0$, in which all couplings equal $\bar J/n$, is the classical exchangeable setting of propagation of chaos~\cite{sznitman:89}{, with large deviations of the empirical measure described in~\cite{dawson-gartner:87,budhiraja-dupuis-fischer:12}}. Recent {breakthroughs} also significantly improved estimates of the quantitative convergence rate. For exchangeable systems, coupling and entropy methods~\cite{sznitman:89,jabin-wang:17} estimated a relative entropy of order $k/n$ between the law of $k$ particles and $Q^{\otimes k}$, but Lacker~\cite{lacker:23} derived the optimal order $(k/n)^2$. The method relied on a BBGKY hierarchy of relative entropies. This convergence rate further holds uniformly in time under functional inequalities~\cite{lacker-leflem:23} and for nonlinear dependence on the measure~\cite{arnese-lacker:26}.

{Many authors have pursued the study of the weakly heterogeneous regime successfully, and there was important recent progress on the problem in the past decade when the connections are random graphs. In the context of general couplings (that is, not specific to graphs), Dai~Pra and den~Hollander~\cite{daipra-denhollander:96} considered mean-field interactions depending on an i.i.d.\ random medium attached to the particles; the McKean--Vlasov limit is then a family of coupled equations indexed by the medium. In this framework, the authors also described the large deviations and Gaussian fluctuations around the limit. In spatially extended networks of neurons, where the coupling is random with a law depending on the positions of the two neurons, propagation of chaos holds in the neural-field limit~\cite{touboul:14} where the limit is indexed by space. The case of singular spatial weights together with a random environment was studied in~\cite{lucon-stannat:14}, and multiscale singular coupling inspired by neural connectivity in~\cite{touboul-multiscale}. For general frozen synaptic weights possibly correlated with random transmission delays, propagation of chaos then holds for almost every realization of the weights when their law does not depend on the postsynaptic neuron, and on average over the weights otherwise (for interactions that do not depend on the state of the postsynaptic neuron)~\cite{quininao-touboul:15}. For deterministic multi-agent systems, mean-field and graph limits were derived for influence weights attached to the agents and evolving in time~\cite{ayi-pouradierduteil:21}, and for general index-dependent interactions, by treating the index as an additional variable~\cite{paul-trelat:24}; see~\cite{ayi-pouradierduteil:24} for a review.}

Results relative to graphs have seen remarkable advances in the past decade. Dense graph limits were introduced in~\cite{medvedev:14} using the theory of graphons~\cite{lovasz:12}. Quenched convergence to the McKean--Vlasov limit for Erd\H{o}s--R\'enyi graphs that are not too sparse was studied in~\cite{delattre-giacomin-lucon:16}, and inhomogeneous, time-varying random graphs in~\cite{bhamidi-budhiraja-wu:19}. Laws of large numbers and large deviations for networks with diverging average degree graphs were derived in~\cite{coppini-dietert-giacomin:20, oliveira-reis:19}, and quenched results on inhomogeneous graphs were covered in~\cite{lucon:20}. {Dense graph limits also led to graphon mean-field systems~\cite{bayraktar-chakraborty-wu:23}.} {In the language of this paper, most of these works concern couplings $J_{ij}=\xi_{ij}/(np_n)$, where $\xi_{ij}\in\{0,1\}$ are the entries of the adjacency matrix of a graph with edge density $p_n$ (or nonnegative graphon weights), the sign of the interaction being carried by the interaction function. For an Erd\H{o}s--R\'enyi graph, $J_{ij}$ has mean $1/n$ and variance $(1-p_n)/(n^2p_n)$, that is, $\bar J=1$, $\sigma=1$ and $a_n=n^2p_n/(1-p_n)$: the weakly heterogeneous regime $n=o(a_n)$ is exactly the regime of diverging average degree $np_n\to\infty$, in which the convergence to the McKean--Vlasov limit is proved~\cite{delattre-giacomin-lucon:16,oliveira-reis:19}, this condition being optimal~\cite{oliveira-reis:19}. Item~1 of the conjecture is thus established for Bernoulli couplings. What these works do not address is its universality with respect to the law of the couplings: i.i.d.\ couplings with a general, possibly signed, distribution with mean $\bar J/n$ and variance $\sigma^2/a_n$.}

Part of this problem is handled by~\cite{jabin-poyato-soler:25}, where a general theory of mean-field limits for non-exchangeable systems is developed which covers sparse graphs and asymmetric weights. The framework of~\cite{jabin-poyato-soler:25} allows general weights, but requires the row and column sums of $|J_{ij}|$ to be bounded and $\max_{i,j}|J_{ij}|\to0$. For i.i.d.\ signed couplings $J_{ij}=\bar J/n+\sigma\hat J_{ij}/\sqrt{a_n}$, since $\sum_j|J_{ij}|\geq\sigma a_n^{-1/2}\sum_j|\hat J_{ij}|-|\bar J|$ and $\frac1n\sum_j|\hat J_{ij}|\to\E_J|\hat J_{11}|>0$ almost surely, these conditions require $\liminf_n a_n/n^2>0$. Conversely, when $\liminf_n a_n/n^2>0$ and the $\hat J_{ij}$ are bounded, they hold, and~\cite{jabin-poyato-soler:25} gives quenched convergence. Limits of sequences of weighted directed graphs, dense, sparse or of intermediate density were studied in~\cite{kuehn-xu:22} for deterministic dynamics. Lacker's hierarchy method yields sharp bounds for non-exchangeable systems with $\max_i\sum_j|J_{ij}|\leq1$~\cite{lacker-yeung-zhou:24}; for i.i.d.\ signed couplings with variance $\sigma^2/a_n$, this condition requires $\liminf_n a_n/n^2>0$, for the same reason. {For non-exchangeable systems with dense enough interaction matrices, the fluctuations around the limit were also shown to coincide with those of the exchangeable case~\cite{shkolnikov-yeung:26}.}

{In summary, quenched convergence to the McKean--Vlasov limit was known for unweighted random graphs that are not too sparse, and for i.i.d.\ signed couplings when $\liminf_n a_n/n^2>0$ (under boundedness or moment assumptions on the $\hat J_{ij}$); to our knowledge, none of these results covers i.i.d.\ signed couplings with $n=o(a_n)$ and $a_n=o(n^2)$.} {Where they apply, these coupling arguments also give the quenched statements of part~(B) of the conjecture (Section~\ref{sec:Conjecture}): the law of a fixed neuron given the couplings converges to the McKean--Vlasov law, and propagation of chaos holds given the couplings.}

\subsection{\texorpdfstring{Quenched convergence for $n=o({a_n})$}{Quenched convergence for n=o(a\_n)}}\label{sec:WeakNew}
We extend these results to show that the weakly heterogeneous regime is, on finite time intervals, a simple perturbation of the mean-field case, controlled by the operator norm of the random part of the coupling matrix, which will allow us to expand the universality results. For this result, we use the $p$-Wasserstein distance on  $\M_1^+(\C)$:
\[
d^V_{T,p}(\mu,\nu):=\inf_{\xi\in\mathcal C_{\mu,\nu}}\Big(\int_{\C^2}\sup_{t\leq T}|x_t-y_t|^p\,d\xi(x,y)\Big)^{\frac1p},
\]
for $p\geq1$ and where $\mathcal C_{\mu,\nu}$ is the set of all couplings of $(\mu,\nu)$, that is, the set of probability measures on $\C^2$ with marginals $\mu$ and $\nu$. It is finite on measures with $\int\sup_{t\leq T}|x_t|^pd\mu(x)<\infty$, and convergence for $d^V_{T,p}$ is equivalent to weak convergence together with convergence of the $p$-th moments $\int\sup_{t\leq T}|x_t|^p\,d\mu$~\cite{villani:09}. We work under the assumptions of the paper that $f$ and $S$ are $K$-Lipschitz and $S$ is bounded. 

\begin{proposition}\label{prop:OpNorm}
Assume that $\int|x|^2d\mu_0(x)<\infty$ and $\sup_{n,i,j}\E_J\hat J_{ij}^4<\infty$. Then:
\begin{enumerate}
\item there exists a constant $C_T$ depending on $T$, $K$, $\Vert S\Vert_\infty$ and $\bar J$ such that, for every realization of the couplings and of the noise,
\[
d^V_{T,2}\big(\hat\mu_n,\hat\mu_n^{\mathrm{mf}}\big)\leq C_T \frac{\sigma}{\sqrt{a_n}}\, \Vert \hat J\Vert_{\mathrm{op}},
\]
where $\hat\mu_n^{\mathrm{mf}}$ is the empirical measure of the network with constant couplings $ J_{ij}=\bar J/n$, driven by the same initial conditions and Brownian motions;
\item $\E_J\Vert \hat{J} \Vert_{\mathrm{op}}\leq C \sqrt{n}$, with $C$ depending only on $\sup_{n,i,j}\E_J\hat J_{ij}^4$.
\end{enumerate}
Consequently, if $n=o(a_n)$, then $d^V_{T,2}(\hat\mu_n,Q_0)\to0$ in probability for every $T>0$, where $Q_0$ is the law of the McKean--Vlasov process~\eqref{eq:MVlimit}. For a sequence with consistent couplings (that is, when the $n$-neuron coupling matrices are the upper-left $n\times n$ submatrices of the same doubly infinite i.i.d.\ array, and the initial conditions and Brownian motions are the first $n$ elements of a fixed i.i.d. sequence), the convergence is almost sure, for almost every sequence of coupling matrices (quenched convergence). 
\end{proposition}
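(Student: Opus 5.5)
Part (1) is a pathwise synchronous-coupling (Gronwall) estimate. Let $X^i$ solve \eqref{eq:InteractingSDE} with couplings $J_{ij}=\bar J/n+\sigma a_n^{-1/2}\hat J_{ij}$. Let $Y^i$ solve the same system with $J_{ij}=\bar J/n$, the same initial conditions and the same Brownian motions. The noise cancels in the difference $\Delta^i_t=X^i_t-Y^i_t$, so $\Delta^i_t$ is $C^1$ and
\[
\Delta^i_t=\int_0^t\Big(f(X^i_s)-f(Y^i_s)+\frac{\bar J}{n}\sum_j\big(S(X^j_s)-S(Y^j_s)\big)+\frac{\sigma}{\sqrt{a_n}}\big(\hat J S(X_s)\big)_i\Big)ds .
\]
Set $D_t=\sup_{u\le t}\frac1n\sum_i|\Delta^i_u|^2$. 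By Cauchy--Schwarz, the first two terms contribute at most $2K(1+|\bar J|)\int_0^t D_s\,ds$ (up to constants). The last term is bounded by $\frac1n\|\hat J S(X_s)\|_2^2\le \|\hat J\|_{\mathrm{op}}^2\frac1n\sum_j S(X^j_s)^2\le\|\hat J\|_{\mathrm{op}}^2\|S\|_\infty^2$. This is where boundedness of $S$ avoids any a priori bound on $X$.

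Applying $(a+b+c)^2\le3(\cdot)$ and Gronwall then gives
\[
D_T\le C_T\,\frac{\sigma^2}{a_n}\|\hat J\|_{\mathrm{op}}^2 .
\]
Using the coupling $\frac1n\sum_i\delta_{(X^i,Y^i)}$ of the two empirical measures yields $d^V_{T,2}(\hat\mu_n,\hat\mu_n^{\mathrm{mf}})\le D_T^{1/2}$, which is (1) pathwise.

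Part (2) is a known random-matrix bound. Under a uniform fourth moment and independent centered entries, Lata{\l}a's inequality gives
\[
\E\|\hat J\|_{\mathrm{op}}\le C\Big(\max_i\big(\textstyle\sum_j\E\hat J_{ij}^2\big)^{1/2}+\max_j\big(\textstyle\sum_i\E\hat J_{ij}^2\big)^{1/2}+\big(\textstyle\sum_{ij}\E\hat J_{ij}^4\big)^{1/4}\Big).
\]
Each of the three terms is $O(\sqrt n)$, so $\E\|\hat J\|_{\mathrm{op}}\le C\sqrt n$. I would quote this rather than reprove it.

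\textbf{Convergence in probability.} By Markov's inequality, $\frac{\sigma}{\sqrt{a_n}}\|\hat J\|_{\mathrm{op}}=O_P(\sqrt{n/a_n})\to0$. The term $d^V_{T,2}(\hat\mu_n^{\mathrm{mf}},Q_0)\to0$ almost surely is the classical exchangeable propagation of chaos (Sznitman). To see this, couple $Y^i$ with i.i.d.\ McKean--Vlasov copies $\bar X^i$ driven by the same noise, which gives $\E\sup_t|Y^1-\bar X^1|^2=O(1/n)$. The empirical measure of $\bar X^i$ converges to $Q_0$ in $d^V_{T,2}$ almost surely by the Wasserstein law of large numbers, since $\sup_t|\bar X_t|^2$ is integrable given a second moment of $\mu_0$ and bounded drift growth. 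The triangle inequality then concludes.

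\textbf{Quenched almost-sure statement: the main obstacle.} For consistent couplings I would need $\sqrt{n/a_n}\,\|\hat J^{(n)}\|_{\mathrm{op}}/\sqrt n\to0$ almost surely, that is, $\limsup_n\|\hat J^{(n)}\|_{\mathrm{op}}/\sqrt n<\infty$ almost surely. Under a finite fourth moment of i.i.d.\ entries this is the Bai--Yin/Yin--Bai--Krishnaiah theorem: $\|\hat J^{(n)}\|_{\mathrm{op}}/\sqrt n\to2$ almost surely, with the fourth moment being exactly the needed condition. I would invoke it directly. The alternative route, Borel--Cantelli from (2) along a geometric subsequence combined with monotonicity of the operator norm of nested submatrices, seems to fail: with only a first-moment bound, Markov's inequality does not give summable probabilities unless $a_n/n$ grows fast. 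Hence I rely on Bai--Yin for the i.i.d.\ case. The noise-side convergence is almost sure by the argument above, so the combination holds almost surely.
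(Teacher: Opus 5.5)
Your argument follows the paper's proof step by step: synchronous coupling with the constant-coupling network and Gronwall through $\Vert\hat J\Vert_{\mathrm{op}}$, Lata{\l}a's inequality, then Markov's inequality and a triangle inequality through i.i.d.\ McKean--Vlasov copies, and finally Yin--Bai--Krishnaiah for nested arrays. However, two steps do not prove what you claim as written.

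\textbf{Minor slip in part (1).} With $D_t=\sup_{u\le t}\frac1n\sum_i|\Delta^i_u|^2$, the supremum sits outside the average. The coupling $\frac1n\sum_i\delta_{(X^i,Y^i)}$ gives $d^V_{T,2}(\hat\mu_n,\hat\mu_n^{\mathrm{mf}})^2\le\frac1n\sum_i\sup_{t\le T}|\Delta^i_t|^2$, which dominates $D_T$. So $d^V_{T,2}\le D_T^{1/2}$ does not follow. This is easy to repair: your bound $|\Delta^i_t|^2\le 3t\int_0^t(\cdots)\,ds$ has a right-hand side nondecreasing in $t$, so the same Gronwall argument runs on $\frac1n\sum_i\sup_{u\le t}|\Delta^i_u|^2$. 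The paper instead uses $\sup_t|\Delta^i_t|^2\le T\int_0^T|\dot\Delta^i_s|^2ds$ and bounds $\Vert\dot\Delta_s\Vert$.

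\textbf{Gap in the quenched almost-sure statement.} The bound $\Exp\sup_t|Y^1_t-\bar X^1_t|^2=O(1/n)$ gives convergence of $d^V_{T,2}(\hat\mu_n^{\mathrm{mf}},\bar\mu_n)$ in $L^2$ only. Since $1/n$ is not summable, it gives no almost-sure convergence. Your closing claim that ``the noise-side convergence is almost sure by the argument above'' is therefore unsupported, and the quenched conclusion with it.

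You need a pathwise estimate instead. Subtract the equations for $Y^i$ and $\bar X^i$ (same noise) and apply Gronwall as above. This gives, for every realization,
\[
\frac1n\sum_i\sup_{t\le T}|Y^i_t-\bar X^i_t|^2\le C_T\int_0^T|R_n(s)|^2ds,
\qquad
R_n(s):=\frac1n\sum_jS(\bar X^j_s)-\Exp S(\bar X^1_s).
\]
The right-hand side tends to $0$ $\P$-almost surely. One route is the strong law of large numbers at each fixed $s$, then Fubini and dominated convergence, using $|R_n|\le2\Vert S\Vert_\infty$. Alternatively, apply a strong law in $C([0,T])$ directly, as the paper does.

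The resulting noise-side remainder does not depend on the couplings and vanishes $\P$-a.s. Combined with $\Vert\hat J^{(n)}\Vert_{\mathrm{op}}/\sqrt n\to2$ $\mathcal P_J$-a.s., this yields the quenched statement.
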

{This result is quenched: the convergence holds in probability with respect to the couplings, and almost surely when the coupling matrices are nested.} It holds on any time interval, covers the whole range $n=o(a_n)$ only under a fourth moment, which improves existing coupling arguments based on exponential moments that typically require $n^2=O(a_n)$. Our argument is {analogous} to the arguments used for interacting diffusions on Erd\H{o}s--R\'enyi graphs with diverging degrees~\cite{delattre-giacomin-lucon:16,coppini-dietert-giacomin:20,oliveira-reis:19}{; to the best of our knowledge, it had not been written for general i.i.d.\ signed couplings, which} are not covered by the non-exchangeable framework of~\cite{jabin-poyato-soler:25} either, since the row sums $\sum_j|J_{ij}|\sim n/\sqrt{a_n}$ diverge when $a_n=o(n^2)$. Its proof is given in Appendix~\ref{app:ProofWeak}. 
The universality in this regime only uses the fact that the random part of the coupling matrix is small in operator norm\footnote{In that view, it is natural that $a_n=n$ serves as a threshold because, for $a_n=n$, {$\frac{\sigma}{\sqrt n}\Vert\hat J\Vert_{\mathrm{op}}\to2\sigma$ (for i.i.d.\ entries with a finite fourth moment~\cite{yin-bai-krishnaiah:88})} does not vanish.}, and this whatever the law of its entries, so the bound holds for every realization, with a constant $C_T$ that depends neither on the couplings nor on their law.

{The same coupling estimate gives the quenched statements of part~(B) of the conjecture for fixed neurons: under the assumptions of Proposition~\ref{prop:OpNorm}, $d\big(Q^1_n(J),Q_0\big)\to0$ and $d\big(Q^{12}_n(J),Q_0\otimes Q_0\big)\to0$ in probability with respect to the couplings (see the end of Appendix~\ref{app:ProofWeak}).}

We can relax the condition on the fourth moment and replace it by a condition slightly stronger than a second moment, but at the cost of {almost sure} quenched convergence. For a truncation level $K_n>0$, let
\[
\varepsilon_n(K_n):=\max_{i,j\leq n}\E_J\big[\hat J_{ij}^2\,\mathbf 1_{|\hat J_{ij}|>K_n}\big].
\]
\begin{proposition}\label{prop:Truncation}
Assume that $\int|x|^2d\mu_0(x)<\infty$, $n=o(a_n)$, and that there exist $K_n>0$ such that
\begin{equation}\label{eq:TruncCond}
K_n\sqrt{\log n}=o\big(\sqrt{a_n}\big)\qquad\text{and}\qquad n\,\varepsilon_n(K_n)=o\big(K_n\sqrt{a_n}\big).
\end{equation}
Then, for every $T>0$, $d^V_{T,2}(\hat\mu_n,{Q_0})\to0$ in probability. 

\noindent We add that condition~\eqref{eq:TruncCond} holds:
\begin{enumerate}
\item[(i)] without any further assumption, if $a_n/(n\sqrt{\log n})\to\infty$;
\item[(ii)] for every $a_n$ with $n=o(a_n)$, if $M:=\sup_{n,i,j}\E_J\big[\hat J_{ij}^2\sqrt{\log(e+|\hat J_{ij}|)}\big]<\infty$.
\end{enumerate}
\end{proposition}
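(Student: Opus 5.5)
The plan is to truncate the couplings and repeat the deterministic coupling argument behind part~1 of Proposition~\ref{prop:OpNorm}. The operator norm is kept for the bounded part of $\hat J$; the heavy remainder is controlled entrywise through row sums of absolute values, which only requires moments of order at most two.

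\textbf{Decomposition of $\hat J$.} Write $\hat J_{ij}=A_{ij}+b_{ij}\mathbf 1+B_{ij}$ with the following pieces.
\begin{itemize}
\item $A_{ij}:=\hat J_{ij}\mathbf 1_{|\hat J_{ij}|\leq K_n}-\E_J[\hat J_{ij}\mathbf 1_{|\hat J_{ij}|\leq K_n}]$ is centered, bounded by $2K_n$, and has variance at most $1$.
\item $b_{ij}:=\E_J[\hat J_{ij}\mathbf 1_{|\hat J_{ij}|\leq K_n}]=-\E_J[\hat J_{ij}\mathbf 1_{|\hat J_{ij}|>K_n}]$ is deterministic. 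Since $|\hat J_{ij}|\,\mathbf 1_{|\hat J_{ij}|>K_n}\leq \hat J_{ij}^2\,\mathbf 1_{|\hat J_{ij}|>K_n}/K_n$, it satisfies $|b_{ij}|\leq \varepsilon_n/K_n$.
\item $B_{ij}:=\hat J_{ij}\mathbf 1_{|\hat J_{ij}|>K_n}$, which satisfies $\E_J|B_{ij}|\leq\varepsilon_n/K_n$ and $\E_J B_{ij}^2\leq\varepsilon_n\leq 1$.
\end{itemize}

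\textbf{Gronwall comparison.} Couple the network with the mean-field network $Y^i$ (couplings $\bar J/n$, same initial conditions and Brownian motions), and set $\Delta_i(t)=\sup_{s\leq t}|X^i_s-Y^i_s|$. Then $X^i-Y^i$ is driven by four contributions:
\begin{itemize}
\item $f$ and the mean coupling, which are Lipschitz and give the usual $K\Delta_i+K|\bar J|\frac1n\sum_j\Delta_j$;
\item $\frac{\sigma}{\sqrt{a_n}}\big((A+b)S(X)\big)_i$, whose $\ell^2$ norm is at most $\frac{\sigma}{\sqrt{a_n}}\Vert A+b\Vert_{\mathrm{op}}\Vert S\Vert_\infty\sqrt n$;
\item $\frac{\sigma}{\sqrt{a_n}}\sum_j B_{ij}S(X^j)$, which is bounded pointwise by $e_i:=\frac{\sigma\Vert S\Vert_\infty}{\sqrt{a_n}}\sum_j|B_{ij}|$.
\end{itemize}
Averaging squares over $i$ and applying Gronwall as in Proposition~\ref{prop:OpNorm}(1), we expect
\[
d^V_{T,2}(\hat\mu_n,\hat\mu^{\mathrm{mf}}_n)^2\leq C_T\Big(\tfrac{\sigma^2}{a_n}\Vert A+b\Vert_{\mathrm{op}}^2+\tfrac1n\textstyle\sum_i e_i^2\Big).
\]

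\textbf{Each error term vanishes in probability.}
\begin{itemize}
\item The mean part $b$ has op norm at most $n\max|b_{ij}|\leq n\varepsilon_n/K_n$. After multiplying by $\sigma/\sqrt{a_n}$ this is $o(1)$ by the second condition in~\eqref{eq:TruncCond}.
\item For the remainder, $\E_J e_i^2\leq \frac{C}{a_n}\big(n\,\E B^2+(n\varepsilon_n/K_n)^2\big)\leq C\big(\frac n{a_n}+\frac{n^2\varepsilon_n^2}{K_n^2a_n}\big)\to0$. Markov's inequality then gives $\frac1n\sum_ie_i^2\to0$ in probability.
\item The main obstacle is the op norm of $A$, a matrix with independent centered entries of variance at most $1$, bounded by $2K_n$. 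Here I would invoke the Bandeira--van Handel (or Seginer/Lata{\l}a type) bound $\E_J\Vert A\Vert_{\mathrm{op}}\leq C(\sqrt n+K_n\sqrt{\log n})$. This gives $\frac{\sigma}{\sqrt{a_n}}\Vert A\Vert_{\mathrm{op}}\to0$ in probability, using $n=o(a_n)$ and the first condition in~\eqref{eq:TruncCond}.
\end{itemize}
Finally, $d^V_{T,2}(\hat\mu^{\mathrm{mf}}_n,Q_0)\to0$ is the classical exchangeable propagation of chaos already used to conclude Proposition~\ref{prop:OpNorm}, and the triangle inequality finishes the argument.

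\textbf{Sufficient conditions.} In both cases take $K_n=\omega_n^{-1}\sqrt{a_n/\log n}$ with $\omega_n\to\infty$ slowly, so that $K_n\sqrt{\log n}=o(\sqrt{a_n})$ holds automatically.
\begin{itemize}
\item For (i), use only $\varepsilon_n\leq1$. Then $n\varepsilon_n/(K_n\sqrt{a_n})\leq n\omega_n\sqrt{\log n}/a_n\to0$, provided $\omega_n$ grows slower than $a_n/(n\sqrt{\log n})$.
\item For (ii), the moment assumption gives $\varepsilon_n\leq M/\sqrt{\log(e+K_n)}$. Since $a_n\geq n$ eventually, $\log K_n\geq\frac12\log n(1-o(1))$, hence $n\varepsilon_n/(K_n\sqrt{a_n})\leq C\,M\,\omega_n\,n/a_n\to0$ once $\omega_n$ grows slower than $a_n/n$.
\end{itemize}
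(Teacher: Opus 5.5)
Your proposal is correct and is essentially the paper's proof. Both arguments truncate at $K_n$, bound the centered bounded part $A$ via Bandeira--van Handel, control the large part through an $\ell^2$ bound on the row sums $\sum_j|B_{ij}|$, and close with the Gronwall estimate of Proposition~\ref{prop:OpNorm}. The differences are cosmetic: you isolate the deterministic mean $b$ and bound it in operator norm by $n\varepsilon_n/K_n$, where the paper centers $B$ and folds that mean into the row sums, and you pick a different admissible $K_n$.

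Two minor precisions. First, Lata{\l}a's inequality would only give $\E_J\Vert A\Vert_{\mathrm{op}}\leq C(\sqrt n+\sqrt{nK_n})$, so it is genuinely the $\sigma_*\sqrt{\log n}$ term of Bandeira--van Handel that is needed. Second, in (ii) the step $\log K_n\geq\frac12\log n\,(1-o(1))$ also requires $\log\omega_n=o(\log n)$, which holds for instance for $\omega_n=\min(\log n,\sqrt{a_n/n})$.
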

{The proof (Appendix~\ref{app:ProofWeak}) relies on a truncation argument. The coupling coefficients are truncated at level $K_n$ for each $n$, the truncated part is controlled in operator norm using sharp bounds on the norm of random matrices with bounded independent entries~\cite{bandeira-vanhandel:16}, and the large coupling terms are {controlled} through the row sums of their absolute values.

\subsection{Open problems}\label{sec:WeakOpen}
These two results bring us much closer to completing the program engaged in the conjecture, and only boundary problems remain open, including:
\begin{itemize}
\item Proposition~\ref{prop:Truncation} gives convergence in probability under a second moment when $a_n/(n\sqrt{\log n})\to\infty$, and in the whole regime $n=o(a_n)$ under the condition $\E_J[\hat J_{ij}^2\sqrt{\log(e+|\hat J_{ij}|)}]$ bounded. These conditions look mostly technical, although we have not been able to lift them yet. The convergence under a second moment was only proven when $n\sqrt{\log n}=o(a_n)$, and the factor $\sqrt{\log n}$ arose from the norm bound for matrices with bounded entries in Bandeira and van Handel~\cite{bandeira-vanhandel:16}. The latter bound is sharp in the general class of matrices considered in~\cite{bandeira-vanhandel:16}, but the operator we need to control for our purpose is of a particular kind (with truncated couplings), and stronger estimates may hold for these. Second, almost sure convergence was only proven {for nested i.i.d.\ couplings with a law independent of $n$ and a finite fourth moment}, and improving these conditions remains an open problem. 
\item For $n=o(a_n)$, Proposition~\ref{prop:OpNorm} gives the rate $\sqrt{n/a_n}$ for the empirical measure, and Proposition~\ref{prop:Truncation} gives no rate. In the exchangeable case much sharper rates are known: the $k$-neuron marginals are at distance $O((k/n)^2)$ from the product law in relative entropy~\cite{lacker:23}, uniformly in time under functional inequalities~\cite{lacker-leflem:23}, and these rates extend to non-exchangeable couplings whose row sums are controlled~\cite{lacker-yeung-zhou:24}, which for i.i.d.\ signed couplings requires $\liminf_n a_n/n^2>0$. The optimal rate, its dependence on $a_n$ and on the law of the couplings, and quantitative propagation of chaos for $k$-neuron marginals, are open problems for the scalings and weights of Proposition~\ref{prop:OpNorm}.
\end{itemize}

\section{Strongly heterogeneous couplings}\label{sec:StrongHet}
\subsection{Literature review}\label{sec:StrongLit}
{In the strongly heterogeneous regime, Geman and Hwang} proved for $S(x)=x$~\cite{geman:1982b} that when the coefficients $J_{ij}=\frac{\sigma}{\sqrt{n}}\xi_{ij}$ where $\xi_{ij}$ iid centered random variables with unit variance (with no other assumption on their distribution), the system converges weakly towards independent copies of a Gaussian process with prescribed mean and covariance structure, given by an implicit equation, and only depending on $\sigma$.
On the strongly heterogeneous regime, Geman~\cite{geman:82} also formulates the conjecture that the latter result holds for nonlinear map $S$ and general interaction coefficients. These questions long remained elusive, and few rigorous results were proved. Thirteen years after the introduction of Geman and Hwang conjecture, a similar result was proved in the context of stochastic interacting spin-glass model with Gaussian coupling $J_{ij}$~\cite{ben-arous-guionnet:95,ben-arous-guionnet:97}. This approach was extended to prove quenched propagation of chaos in the same model in~\cite{guionnet:97}, and related results were derived for Glauber dynamics~\cite{grunwald:96} and extended to describe long-time behavior and aging in spherical models~\cite{ben-arous-dembo-guionnet:01,ben-arous-dembo-guionnet:06}, or applied to neural systems with spatial interactions, delays and state-dependent interactions~\cite{cabana-touboul:15,cabana-touboul:16}. Technically, the principle of the proof consists in demonstrating that the sequence of empirical measures satisfies a large-deviations principle, and many arguments rely on the Gaussian nature of the coupling weights. In particular, this proves the Gaussian case of Geman's conjecture, extended to the case of networks with stochastic inputs, with a convergence quantified through exponential tightness estimates of the empirical measure and a weak Large Deviation Principle (LDP)~\cite{cabana-touboul:15},  with a good rate function $H_{\sigma}$ that attains its minimal value at a unique probability measure $Q_{\sigma}$, in an average sense, given by the process:
\begin{equation}\label{eq:GaussianLimit}
\left\{
  \begin{array}{ll}
   d\bar{X}_t = \Big( f(\bar{X}_t) + G^{Q_{\sigma}}_t + m_{Q_{\sigma}}(t) \Big) dt + dW_t \\
   \bar{X}_0 \eqlaw \mu_0,
  \end{array}
\right.
\end{equation}
where $\big(G^{Q_{\sigma}}_t, 0\leq t \leq T\big)$ is a centered Gaussian process with covariance $K_{Q_{\sigma},\sigma}${, independent of $(\bar X_0,W)$} (see also~\cite{faugeras-tanre:25}), consistent with the conjecture. 

On the physics side, equation~\eqref{eq:GaussianLimit} is often referred to as the Dynamic Mean-Field Theory, DMFT. It was discussed in the landmark work of Sompolinsky, Crisanti and Sommers~\cite{sompolinsky-crisanti-etal:88} where it was interpreted as the dynamics of a particle in a self-consistent potential, and then reformulated as a large-deviation principle for the empirical measure in~\cite{vanmeegen-kuhn-helias:21} (see~\cite{helias-dahmen:20} for a textbook account). Universality for strongly heterogeneous interactions is less developed. In physics, it is largely admitted (see the citation of M\'ezard, Parisi and Virasoro in the introduction).

In mathematics, results relative to universality are scarce. A tour de force in the domain was achieved by Moynot and Samuelides in 2002 for discrete-time versions of system~\eqref{eq:network}, where it was shown that for specific couplings the networks remained asymptotically arbitrarily close to the Gaussian case, so that the large-deviation principle persists in the system and that it implies universality for non-Gaussian weights with Gaussian tails~\cite{moynot-samuelides:02}. However, extending these results to continuous-time diffusions or to weaker assumptions has remained open for many years. A natural hypothesis would be that these results can be extended to continuous-time dynamics by discretizing equation~\eqref{eq:network} on increasingly fine time grids. However, when doing so naively, the estimates of Moynot and Samuelides blow up, and no control is conserved in the continuous-time limit. We propose in Theorem~\ref{thm:Convergence4} an alternative extension that conserves the continuous-time nature of the system. Using a distinct set of tools, important rigorous results for the universality were obtained in recent years. In particular, Dembo, Lubetzky and Zeitouni~\cite{dembo-lubetzky-zeitouni:21} showed universality of the limit for confined Langevin dynamics with \emph{linear} interactions $\frac{\beta}{\sqrt n}\sum_jJ_{ij}X^j_t$ where the $J_{ij}$ are iid centered random variables with unit variance and exponential tails. These results are very strong: they ensure that for any $\beta$ the empirical measure on a fixed (but arbitrary) time interval converges almost surely to the same limit as in the Gaussian case. Their approach consists in freezing the interaction on a time grid allows them to compare Gaussian and non-Gaussian frozen systems through the multivariate Lindeberg principle proposed by Chatterjee~\cite{chatterjee:06} (this tool also yields universality of the free energy of the Sherrington--Kirkpatrick model~\cite{carmona-hu:06}, see~\cite{talagrand:11} for the statics). These estimates are made on the event where the operator norm of the coupling matrix is bounded, where they can then control the frozen and true dynamics by a pathwise coupling argument rather than by a comparison of densities. Another remarkable work towards universality is due to Dembo and Gheissari~\cite{dembo-gheissari:21}, where they show that averaged observables of diffusions interacting \emph{linearly} through a random matrix with independent entries depend only on \emph{the first two moments} of the entries, assuming sub-exponential tails for the couplings. Here again the method of proof is distinct from all previous approaches: it relies on stochastic Taylor expansion and moment matching, and this result covers a large part of the conjecture on strongly heterogeneous synapses in the case of \emph{linear} interactions. Finally, an alternative method was proposed in~\cite{celentano-cheng-montanari:21} based on time-discretizations and reduction to iterative schemes known as Approximate Message Passing (AMP), which yields universality with respect to the law of i.i.d.\ sub-Gaussian entries (see also~\cite{gerbelot-etal:24} for stochastic gradient descent, and~\cite{dudeja-lu-sen:23} for the universality of AMP). To our knowledge, universality for nonlinear interactions $\sum_jJ_{ij}S(X^j)$ in continuous time, mentioned as open in~\cite[Remark~1.4]{dembo-lubetzky-zeitouni:21}, had not been established. 

In terms of the universality of the propagation of chaos property, averaged and quenched results were proven for Gaussian couplings for a closely related Langevin dynamics of spin glasses, in which the interaction $\sum_jJ_{ij}x^j$ is linear and the nonlinearity lies in the confining drift~\cite{ben-arous-guionnet:95,guionnet:97}. For the model~\eqref{eq:network}, in which the interaction acts through the nonlinear map $S$, the quenched statements of part~(B) of the conjecture (Section~\ref{sec:Conjecture}) are, to our knowledge, open, even for Gaussian couplings.

\subsection{{Continuous-time universality with sub-Gaussian tails}}\label{sec:StrongNew}

We extend the class of models proven to be in the universality class by showing an averaged convergence for sub-Gaussian couplings $\hat J_{ij}$ i.i.d., centered, with unit variance and sub-Gaussian moments:
\begin{equation}\label{hyp:sub-Gaussian-tail}
\ \sup_{n,i,j}\mathcal E_J\big[e^{\theta_0\hat J_{ij}^2}\big]\leq D\tag{$H_J$}
\end{equation}
for some $\theta_0,D>0$.}

\begin{theorem}\label{thm:Convergence4}
Under the current assumptions, suppose that the couplings satisfy~\eqref{hyp:sub-Gaussian-tail}  and that $\mu_0$ has a finite second moment. Then, for every $T>0$ and every $\epsilon>0$, there exists $C>0$ such that, for all $n\geq2$, with $B(Q_\sigma,\epsilon)$ the open ball for a metric of weak convergence on $\M_1^+(\C)$ (e.g., the L\'evy--Prokhorov metric),
\[
\hat Q^n_\sigma\big(\hat\mu_n\notin B(Q_\sigma,\epsilon)\big)\leq\frac{C}{\log n}.
\]
In particular, $\big(\hat{Q}^n_{\sigma}(\hat{\mu}_{n} \in \cdot ) \big)_{n \in \N^*}$ converges towards $\delta_{Q_{\sigma}} \in \M_1^+\big( \M_1^+(\C ) \big)$.
\end{theorem}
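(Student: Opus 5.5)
The plan is a relative-entropy comparison between the averaged non-Gaussian law $\hat Q^n_\sigma$ and the averaged Gaussian law $\tilde Q^n_\sigma$, followed by the large-deviation estimate available in the Gaussian case. The Gaussian case is the input. By the weak LDP with good rate function $H_\sigma$ and the exponential tightness of~\cite{cabana-touboul:15}, since $H_\sigma$ has the unique zero $Q_\sigma$, there exists $\delta=\delta(\epsilon,T)>0$ such that
\[
\tilde Q^n_\sigma\big(\hat\mu_n\notin B(Q_\sigma,\epsilon)\big)\le e^{-\delta n}
\]
for all $n$ large (and all $n\ge2$ after adjusting constants).

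The transfer to sub-Gaussian couplings will use the entropy inequality
\[
P(A)\le\frac{H(P\,|\,R)+\log 2}{\log\big(1+1/R(A)\big)},
\]
applied with $P=\hat Q^n_\sigma$, $R=\tilde Q^n_\sigma$ and $A=\{\hat\mu_n\notin B(Q_\sigma,\epsilon)\}$. Since $\log(1+1/R(A))\ge\delta n$, it suffices to prove
\[
H\big(\hat Q^n_\sigma\,\big|\,\tilde Q^n_\sigma\big)\le C\,\frac{n}{\log n}.
\]
A plain $O(n)$ bound would not be enough.

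To compute this entropy, I would write both averaged laws through Girsanov with respect to the uncoupled reference $P^{\otimes n}$, where $P$ is the law of $dx=f(x)dt+dW$. The density of $\hat Q^n_\sigma$ is then
\[
\E_J\Big[\exp\Big(\sum_i\int_0^T\big(\textstyle\sum_jJ_{ij}S(x^j_t)\big)dW^i_t-\frac12\int_0^T\big|\textstyle\sum_jJ_{ij}S(x^j_t)\big|^2dt\Big)\Big].
\]
Conditionally on the paths the rows decouple, so the density factorizes as $\prod_i\Phi_{\hat J}(x^i;x)$, where each factor is a one-row average. In the Gaussian case this average is explicit: a Gaussian integral expressed through the matrix $\big(\int S(x^j_t)dx^i_t\big)$ and the covariance $K_{\hat\mu_n,\sigma}$. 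The main obstacle is comparing the two one-row averages. I would use a Lindeberg-type replacement, swapping the $n$ entries of a row one at a time, or alternatively a cumulant expansion of $\log\E_J e^{\langle\hat J,\cdot\rangle}$.

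The first two moments match, so the error in each replacement is governed by third- and fourth-order terms in $\sigma n^{-1/2}\int S(x^j)\,dx^i$. Since $S$ is bounded, these terms are of order $n^{-3/2}$ times powers of stochastic integrals. Summed over $n^2$ entries, this gives $O(\sqrt n)$ times moments. Here the sub-Gaussian hypothesis~\eqref{hyp:sub-Gaussian-tail} is used to control exponential moments of $\hat J$ against Gaussian-tailed martingales. The difficulty is that the weights $\exp(\cdots)$ are unbounded. I would handle this by truncating on the event $\{\sup_{i,j}|\int_0^T S(x^j_t)\,dW^i_t|\le L_n\}$ and choosing $L_n\sim\sqrt{\log n}$. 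Off this event, the Gaussian concentration of stochastic integrals with bounded integrands makes the contribution polynomially small. On the event, the Taylor remainders are bounded by powers of $L_n$ over $\sqrt n$.

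Balancing the two contributions is where the logarithmic loss comes from, yielding $H\le Cn/\log n$; the exact exponent is immaterial as long as the bound is $o(n)$ with rate $1/\log n$. Inserting this into the entropy inequality gives
\[
\hat Q^n_\sigma(A)\le\frac{Cn/\log n+\log2}{\delta n}\le\frac{C'}{\log n},
\]
and the convergence $\hat Q^n_\sigma(\hat\mu_n\in\cdot)\to\delta_{Q_\sigma}$ follows since $\epsilon$ is arbitrary. A finite second moment of $\mu_0$ is only needed for well-posedness and for the Gaussian LDP input.
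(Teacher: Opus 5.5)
Your architecture matches the paper's: Girsanov densities with respect to $P^{\otimes n}$, factorization of the averaged density over rows, a Lindeberg replacement within each row, the target bound $H(\hat Q^n_\sigma|\tilde Q^n_\sigma)\le Cn/\log n$, and the entropy inequality combined with exponential decay of $\tilde Q^n_\sigma(\hat\mu_n\notin B(Q_\sigma,\epsilon))$. On that last point, the paper proves the exponential tightness itself (Lemma~\ref{lem:Bgauss}) and takes only the compact-set bound from~\cite{cabana-touboul:15}.

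\textbf{Main gap.} The step you treat as routine is where the real difficulty lies. The relative entropy is an expectation under $\hat Q^n_\sigma$, the law of the interacting non-Gaussian network. Under this law $W_t(x^i)$ is not a Brownian motion:
$A^i_j=\int_0^TS(X^j_t)\,dW_t(x^i)=\int_0^TS(X^j_t)\,dW^i_t+\int_0^TS(X^j_t)\,u^i_t\,dt$, with $u^i_t=\sum_kJ_{ik}S(X^k_t)$.
\begin{itemize}
\item Gaussian concentration for stochastic integrals with bounded integrands handles only the martingale term. It holds for all of $A^i_j$ under $P^{\otimes n}$, but changing measure back to $\hat Q^n_\sigma$ by Cauchy--Schwarz costs a factor $e^{O(n)}$.
\item For the drift term, crude bounds only give $|u^i_t|=O(\sqrt n)$. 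Getting $u^i=O(1)$ needs the cancellations in $\sum_kJ_{ik}S(X^k_t)$.
\item Those cancellations are not available directly: the $X^k$ depend on the very weights $J_{ik}$ (through $X^i$). Conditionally on the paths, the summands are neither centered nor independent.
\end{itemize}
The paper resolves this in Lemma~\ref{lem:Bmoments} with a cavity argument. It compares $X^k$ with the network $Y$ from which neuron $i$ is removed, which is independent of row $i$, so the main part of $u^i$ is a sum of conditionally independent centered terms. It bounds $X-Y$ by Gronwall through $\Vert J\Vert_{\mathrm{op}}$. Finally it uses $\sup_n\E\exp(\lambda\Vert\hat J\Vert_{\mathrm{op}}/\sqrt n)<\infty$ to get $\sup\E_{\hat Q^n_\sigma}[n^2(\alpha^i_j)^4]<\infty$. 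Without an estimate of this kind, neither the probability of your truncation event nor the moments needed off it are controlled.

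\textbf{Secondary points.}
\begin{itemize}
\item Off the good event you need an a priori bound on the integrand, not just a small probability. The paper uses $|\log(\Lambda_\xi/\Lambda_g)|\le C(1+\Vert\alpha^i\Vert^2)$, from Jensen below and the sub-Gaussian moment generating function above.
\item The Lindeberg remainder is weighted by $F(W)=e^{\langle W,\alpha\rangle-\frac12W^\top\Gamma W}$, which is unbounded in the remaining couplings of the row. The straightforward Cauchy--Schwarz bound relative to $\Lambda_g$ produces a factor $e^{c\Vert\alpha\Vert^2}$. On your event, $\Vert\alpha^i\Vert^2\lesssim L_n^2\sim\log n$, so this factor is a power of $n$ whose exponent must stay below $1/2$. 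That conflicts with taking $L_n$ large enough for a union bound over $n^2$ pairs, unless the constants happen to cooperate.
\item The paper instead thresholds $\Vert\alpha^i\Vert^2$ row by row at $L=\log n/(4c)$, so that $e^{cL}=n^{1/4}$. It controls the complement only through the fourth moments of Lemma~\ref{lem:Bmoments}, via $(1+\Vert\alpha\Vert^2)\mathbf 1_{\{\Vert\alpha\Vert^2>L\}}\le(1+\Vert\alpha\Vert^2)^2/L$.
\item This Markov step, not a balancing of two polynomially small terms, is the source of the $1/\log n$ rate. If your off-event contribution really were polynomially small, you would get a polynomial rate, so your description does not account for the logarithmic loss.
\end{itemize}
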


This result is averaged over the couplings, but it holds on every time interval and for the nonlinear interaction of~\eqref{eq:InteractingSDE}, which~\cite{dembo-lubetzky-zeitouni:21,dembo-gheissari:21} do not cover. Its complete proof is given in Appendix~\ref{sec:NonGaussian}, and uses relative entropy estimates to handle a technical difficulty arising when applying~\cite{moynot-samuelides:02} to continuous time processes that limited the time horizon of these results in~\cite{cabana2016large}. The principle of the proof consists in comparing the averaged laws with non-Gaussian and Gaussian couplings through their Radon-Nikodym derivatives with respect to the law of independent neurons. After averaging, these densities factorize over the rows of the coupling matrix, and each row factor can be compared with its Gaussian counterpart by a Lindeberg replacement, control the relative entropy $H(\hat Q^n_\sigma|\tilde Q^n_\sigma)$, and conclude using Donsker--Varadhan's inequality. This argument is likely not optimal for computing a rate of convergence, yielding a rate equal to $1/\log n$ instead of an exponential rate. A direct extension of our result should allow obtaining a rate $(\log n)^{-k}$ for every $k\geq1$ (by considering moments higher than order 4 in Lemma~\ref{lem:Bmoments}), but our estimates do not provide a polynomial or exponential rate of convergence.}

\subsection{Open problems}\label{sec:StrongOpen}
These results leave a wide range of open problems mathematically. In particular, quenched universality in the nonlinear interactions case is still open; Theorem~\ref{thm:Convergence4} covers the averaged case, with a rate $1/\log n$, and its proof requires independent (non-symmetric) couplings. Extending the Lindeberg-and-coupling strategy of~\cite{dembo-lubetzky-zeitouni:21} seems the natural route to quenched statements. Moreover, the conditions used in most universality theorems are stronger than what is claimed in the conjecture. The results of~\cite{dembo-lubetzky-zeitouni:21} require exponential tails, and Theorem~\ref{thm:Convergence4} sub-Gaussian tails. We are thus far from the minimal assumptions that are included under the conjecture, that would just require some finite small degree polynomial moments. A variety of couplings considered in our numerical experiments tend to confirm that the conjecture should hold on these cases, including log-normal weights (that have moments of all orders but no exponential moments) or power-law couplings (Pareto $\alpha\in\{5/2, 3,7/2\}$) which have moments of order more than 2, but not every polynomial moment bounded. In the latter case where $\P(|g_{ij}|>x)\sim x^{-\alpha}$ with $\alpha\in(2,4)$ and $J_{ij}=g_{ij}/\sqrt n$, while the central limit theorem holds row by row, the network contains a vanishing fraction, on the order of $n^{2-\alpha/2}$ in number, of couplings of order one, and its largest coupling grows like $n^{2/\alpha-1/2}$. These rare strong couplings control the operator norm of the coupling matrix, and for symmetric couplings, its largest eigenvalues~\cite{auffinger-benarous-peche:09}, and may therefore affect long-time stability and the transition to chaos, although they affect a vanishing fraction of neurons on finite time intervals.
Finally, while the fluctuations around the McKean--Vlasov limit were recently shown to be universal with respect to the structure of dense interaction matrices~\cite{shkolnikov-yeung:26}, the strongly heterogeneous analogue of that result is open to the best of our knowledge.

\section{Heavy-tailed couplings}\label{sec:HeavyTails}
\subsection{Literature review}\label{sec:HeavyLit}
For heavy-tailed couplings, much less is known mathematically. Recent interest arose in the past five years in the physics community. Discrete-time networks with Cauchy weights scaled by $1/n$ were studied in~\cite{kusmierz-ogawa-toyoizumi:20}, where authors derived a recursion relationship on the Cauchy population average of $|S(x_j)|$, which they use to accurately predict a continuous transition between a quiescent phase and an active phase. They also analyze the dynamics and find that it propagates almost discretely, depending only on a few of the largest weights, forming a propagation akin to a critical branching process, and therefore in turn produces scale-free avalanches. Stationary states of rate networks with $\alpha$-stable weights, $\alpha\in(1,2)$ were studied in~\cite{wardak-gong:21,wardak-gong:22}. These states are analogs of a time-independent version of the limit conjectured in Section~\ref{sec:HeavyTails}. The authors show that eigenvectors of the Jacobian are multifractal, that individual neurons' autocorrelation fails to self-average, and derive generalized autocorrelation functions in the frequency domain. Finally, finite-size effects associated with the transition from quiescence to chaos were studied in~\cite{xie-mihalas-kusmierz:25}. 

On the mathematical side, little remains known about these systems, and we are not aware of any formulation of a dynamical equation for the system before this paper. Outside of the dynamics of neural networks with heavy-tailed couplings, a number of mathematical works in the past two decades have characterized in various situations the spectrum of heavy-tailed random matrices~\cite{ben-arous-guionnet:08,bordenave-caputo-chafai:11} (see also~\cite{soshnikov:04,bordenave-guionnet:13}). In the context of machine learning, it was shown that feedforward networks with infinite-width and stable weights are stable processes~\cite{favaro-fortini-peluchetti:20}, see also~\cite{jung-lee-lee-yang:23} for heavy- and light-tailed weights. The statics of a spin glass with heavy-tailed couplings, the L\'evy spin glass, were studied rigorously in~\cite{chen-kim-sen:23}. To the best of our knowledge, however, no formulation of a possible limit is available for recurrent stochastic dynamics with heavy-tailed couplings and no characterization for the law of the trajectories on finite time intervals was proposed, and \emph{a fortiori}, no characterization of possible propagation of chaos and self-averaging discussed in part~(B) of the conjecture, have been studied mathematically.

\subsection{Heuristics}\label{sec:HeavyHeuristics}
We work under the assumptions that $J_{ij}=g_{ij}/n^{1/\alpha}$ with $(g_{ij})$ be i.i.d. with regularly varying tails, $\P(|g_{ij}|>x)\sim x^{-\alpha}$, $\alpha\in(0,2)$, and we assume for simplicity that the distribution of $g_{ij}$ is symmetric. Collections of heavy tailed random variables often present vastly distinct property compared to, e.g., random variables with exponential moments. In particular, 
\begin{enumerate}
\item For each $\varepsilon>0$, the number of incoming weights of a given neuron with $|J_{ij}|>\varepsilon$ converges in law to a Poisson variable with mean of order $\varepsilon^{-\alpha}$, while the total contribution of the weights below $\varepsilon$ has variance of order $\varepsilon^{2-\alpha}$. The interaction is therefore largely (that is, with an arbitrarily small error) carried by a \emph{sparse} weighted graph with finite mean degree.
\item The quantity $\sum_j J_{ij}^2$, which in the Gaussian case concentrates around $\sigma^2$ and sets the variance of the local field, converges in law to a positive $\alpha/2$-stable variable. We therefore do not expect self-averaging to happen, and different neurons see local fields with different statistics, even as $n\to\infty$.
\item Some couplings will be extremely large, with the largest coupling in the whole network diverging as $n^{1/\alpha}$, which will affect regularity of the solutions and their dynamics.
\end{enumerate}
Our conjecture proposes that the system converges to the self-consistent equation:
\[
\begin{gathered}dx_t=\big(f(x_t)+\mu\,\Exp[S(x_t)]+Z^x_t\big)\,dt+\lambda\,dW_t,\\ Z^x_t=C_\alpha\sum_{k\geq1}\varepsilon_k\Gamma_k^{-1/\alpha}S(y^k_t),\quad y^k\overset{\text{i.i.d.}}{\sim}Q=\mathrm{Law}(x).\end{gathered}
\]
The LePage representation of $Z_t^x$ can be seen as an illustration of item~1 above. Neuron $i$ receives large inputs from presynaptic neurons $j_{1},j_{2},\ldots$ chosen uniformly at random, with weights close to $\varepsilon_k\Gamma_k^{-1/\alpha}$. Because the weights are independent and asymmetric, neuron $i$ is, with high probability, not among the large-weight presynaptic neurons of $j_{k}$, and the graph of large weights explored from $i$ converges locally to a Poisson weighted infinite tree~\cite{aldous-steele:04,bordenave-caputo-chafai:11}. On a directed tree, the presynaptic neurons are driven by disjoint subtrees; they should therefore be asymptotically independent, independent of neuron $i$, and distributed according to $Q$, which is precisely the self-consistency in the LePage series. In this picture, molecular chaos fails pairwise along the large-weight edges, but in a way that is invisible at the level of the empirical measure.

This heuristic underlies part~(B) of our conjecture, that the empirical measure $\hat\mu_n$ converges to $Q$ in probability, but in contrast with the Gaussian case, the law of a fixed neuron \emph{given the couplings} does not converge to $Q$, but to a random law determined by the local limit of the weighted graph around that neuron ($Q$ is then recovered only after averaging over the disorder). The latter point could be the mathematical counterpart of the non-self-averaging reported by Wardak and Gong~\cite{wardak-gong:22}, and is consistent with the trajectories of Figure~\ref{fig:universality} (bottom), which are different between coupling matrices even though the statistics of the population agree. It also suggests that the natural framework is that of local weak convergence for sparse networks of interacting processes~\cite{lacker-ramanan-wu:23}, and the marginal dynamics on random trees developed in~\cite{lacker-ramanan-wu:23b,ramanan:23}, rather than classical mean-field theory.

\subsection{Well-posedness of the conjectured limit}\label{sec:HeavyWellPosed}
As a starting point on the conjecture, we study whether the putative limit of the system is well-posed here. In detail, we show existence for all $\alpha\in (0,2)$, and uniqueness for $\alpha\in[1,2)$ and when $f$ is affine. The argument is not a usual Picard iterate using a Lipschitz constant of $Q\mapsto Z^Q$, but it relies on the scaling of the LePage series and their couplings. 

Fix $T>0$, $\alpha\in(0,2)$, $\lambda\geq0$, $\mu\in\R$, an initial law $\mu_0$ on $\R$. We continue to assume $f$ and $S$ $K$-Lipschitz continuous and $S$ bounded. Let $(\Gamma_k)_{k\geq1}$ be the arrival times of a unit-rate Poisson process and $(\varepsilon_k)_{k\geq1}$ i.i.d.\ Rademacher signs. For $Q\in\M_1^+(\C)$, let $(y^k)_{k\geq1}$ be i.i.d.\ with law $Q$, and assume that $(\Gamma_k)$, $(\varepsilon_k)$, $(y^k)$, $x_0\sim\mu_0$ and the Brownian motion $W$ are independent. Set
\[
Z^Q:=C_\alpha\sum_{k\geq1}\varepsilon_k\Gamma_k^{-1/\alpha}S(y^k),\qquad m_Q(t):=\mu\,\Exp_Q[S(y_t)].
\]

Building all processes based on the same random variables is a key ingredient of our proof, together with the following result, which relies on some classical estimates (see e.g.~\cite{ledoux-talagrand:91}), but that we include for completeness and to introduce our notations, and include a brief proof in Appendix~\ref{app:ProofsHeavy}. 
\begin{lemma}\label{lem:LePageTail}
Let $H$ be a separable Hilbert space and $(v_k)$ i.i.d.\ $H$-valued random variables, independent of $(\Gamma_k)$ and $(\varepsilon_k)$, with $m_\alpha:=\Exp\Vert v_1\Vert^\alpha<\infty$. Then $X:=\sum_k\varepsilon_k\Gamma_k^{-1/\alpha}v_k$ converges almost surely in $H$, its law only depends on the law of $v_1$, and
\[
\forall u>0,\qquad \P(\Vert X\Vert>u)\leq\frac{2}{2-\alpha}\,m_\alpha\,u^{-\alpha}.
\]
Consequently, with $\psi(u):=u\big(1+\log^+(1/u)\big)$,
\[
\Exp\big[(\Vert X\Vert\wedge1)^\alpha\big]\leq\psi\Big(\frac{2}{2-\alpha}\,m_\alpha\Big).
\]
\end{lemma}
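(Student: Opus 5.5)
Fix $u>0$ and split the series at the index where $\Gamma_k^{-1/\alpha}$ crosses the level $u$. We first establish almost sure convergence, then the tail bound by splitting into "large" and "small" terms, and finally the bound on $\Exp[(\Vert X\Vert\wedge 1)^\alpha]$ by integrating the tail.

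\emph{Convergence and law.} Condition on $(\Gamma_k,v_k)$. The series $\sum_k\varepsilon_k\Gamma_k^{-1/\alpha}v_k$ is then a Rademacher series in $H$. It converges a.s. as soon as $\sum_k\Gamma_k^{-2/\alpha}\Vert v_k\Vert^2<\infty$, by the Itô--Nisio theorem or simply by the $L^2$ martingale convergence theorem in a Hilbert space.

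To check this condition, note that $\Gamma_k/k\to1$ a.s. It therefore suffices to show $\sum_k k^{-2/\alpha}\Vert v_k\Vert^2<\infty$ a.s. By Kolmogorov's three-series theorem, using only $\Exp\Vert v_1\Vert^\alpha<\infty$:
\begin{itemize}
\item $\sum_k\P(\Vert v_k\Vert^2>k^{2/\alpha})=\sum_k\P(\Vert v_1\Vert^\alpha>k)<\infty$;
\item the truncated means satisfy $\sum_k k^{-2/\alpha}\Exp[\Vert v_1\Vert^2\mathbf 1_{\Vert v_1\Vert\le k^{1/\alpha}}]\lesssim \Exp\Vert v_1\Vert^\alpha$, since $\sum_{k\ge \Vert v\Vert^\alpha}k^{-2/\alpha}\lesssim \Vert v\Vert^{\alpha-2}$ (here $2/\alpha>1$).
\end{itemize}
The law of $X$ is a measurable function of the law of the i.i.d. triple sequence, hence depends only on the law of $v_1$.

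\emph{Tail bound.} Define the large-term part $X_{\rm L}:=\sum_{k:\Gamma_k^{-1/\alpha}\Vert v_k\Vert>u}\varepsilon_k\Gamma_k^{-1/\alpha}v_k$ and the small-term part $X_{\rm S}:=X-X_{\rm L}$. By the marking theorem, the points $\{(\Gamma_k,v_k)\}$ form a Poisson process on $\R_+\times H$ with intensity $dt\otimes \mathcal L(v_1)$. We then use the bound
\[
\P(\Vert X\Vert>u)\le \P(X_{\rm L}\ne0)+\P(\Vert X_{\rm S}\Vert>u).
\]
\begin{itemize}
\item For the first term, the number of points with $t^{-1/\alpha}\Vert v\Vert>u$ is Poisson with mean $\int\P(\Vert v_1\Vert^\alpha u^{-\alpha}>t)\,dt=m_\alpha u^{-\alpha}$. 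Hence $\P(X_{\rm L}\ne0)\le m_\alpha u^{-\alpha}$.
\item For the second term, apply Chebyshev's inequality. The signs are independent of everything else, so the cross terms vanish in the Hilbert norm and $\Exp\Vert X_{\rm S}\Vert^2=\int\!\!\int t^{-2/\alpha}\Vert v\Vert^2\mathbf 1_{t^{-1/\alpha}\Vert v\Vert\le u}\,dt\,d\mathcal L(v)$. Substituting $s=t^{-1/\alpha}\Vert v\Vert$, this equals $\Exp\Vert v_1\Vert^\alpha\int_0^u \alpha s^{1-\alpha}ds=\frac{\alpha}{2-\alpha}m_\alpha u^{2-\alpha}$. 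Dividing by $u^2$ gives $\frac{\alpha}{2-\alpha}m_\alpha u^{-\alpha}$.
\end{itemize}
Summing the two contributions gives $\big(1+\frac{\alpha}{2-\alpha}\big)m_\alpha u^{-\alpha}=\frac{2}{2-\alpha}m_\alpha u^{-\alpha}$, which is exactly the stated constant. This step is the heart of the lemma. The care needed is in justifying the Poisson/Campbell computation and the cancellation of cross terms, which rests on conditional independence of the signs given $(\Gamma,v)$.

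\emph{Moment bound.} Set $A:=\frac{2}{2-\alpha}m_\alpha$. Write
\[
\Exp[(\Vert X\Vert\wedge1)^\alpha]=\int_0^1\P(\Vert X\Vert^\alpha>s)\,ds\le\int_0^1\min(1,A/s)\,ds.
\]
If $A\ge1$, the integral is at most $1\le A=\psi(A)$. If $A<1$, it equals $A+A\log(1/A)=\psi(A)$, which concludes the proof.
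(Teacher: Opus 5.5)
Your proof is correct and follows the paper's argument. It uses the same split of the series according to whether $\Gamma_k^{-1/\alpha}\Vert v_k\Vert$ exceeds $u$ (your opening sentence mentions $\Gamma_k^{-1/\alpha}$ alone, but the body splits correctly), the Poisson-count bound $m_\alpha u^{-\alpha}$ for the large terms, Chebyshev with the Campbell computation $\frac{\alpha}{2-\alpha}m_\alpha u^{2-\alpha}$ for the small terms, and integration of the tail against $\min(1,A/s)$ for the moment bound. The only addition is your self-contained proof of almost sure convergence, via conditional Rademacher series and the three-series theorem, where the paper just cites the classical LePage--Woodroofe--Zinn and Ledoux--Talagrand results.
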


Applied to $H=L^2([0,T])$ and $v_k=S(y^k)$, with $\Vert v_k\Vert\leq\sqrt T\Vert S\Vert_\infty$, Lemma~\ref{lem:LePageTail} shows that $Z^Q$ is a well-defined $L^2([0,T])$-valued random variable whose law only depends on $Q$, and that
\begin{equation}\label{eq:ZTailUniform}
\P\big(\Vert Z^Q\Vert_{L^2(0,T)}>u\big)\leq\frac{2C_\alpha^\alpha}{2-\alpha}\big(T\Vert S\Vert_\infty^2\big)^{\frac\alpha2}u^{-\alpha}\qquad\text{uniformly in }Q.
\end{equation}
In particular $Z^Q\in L^1([0,T])$ almost surely and, $f$ being Lipschitz, the equation
\begin{equation}\label{eq:HeavyLimitFixedQ}
x_t=x_0+\int_0^t\big(f(x_s)+m_Q(s)\big)ds+\int_0^tZ^Q_sds+\lambda W_t
\end{equation}
has a unique continuous solution, pathwise. We denote by $\Phi(Q)$ its law on $\C$. A \emph{solution of the conjectured limit equation} is a fixed point $Q=\Phi(Q)$.

\begin{theorem}\label{thm:HeavyWellPosed}
\begin{enumerate}
\item[(i)] For every $\alpha\in(0,2)$, the map $\Phi$ has at least one fixed point.
\item[(ii)] If $\alpha\in[1,2)$ and $f$ is affine, $f(x)=-\kappa x+c$, the fixed point is unique.
\end{enumerate}
\end{theorem}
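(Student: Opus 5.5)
The plan is to treat existence by a compactness/fixed-point argument on $\M_1^+(\C)$ and uniqueness by a pathwise coupling of two candidate solutions. The coupling builds both LePage series on the same $(\Gamma_k,\varepsilon_k)$, $x_0$ and $W$, and couples the presynaptic trajectories $(y^k,y'^k)$ optimally. Throughout, Lemma~\ref{lem:LePageTail} and the uniform bound~\eqref{eq:ZTailUniform} are the only quantitative inputs on $Z^Q$.

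\emph{Existence (i).} I will apply the Schauder--Tychonoff theorem to $\Phi$, acting on a convex compact set $\mathcal K\subset\M_1^+(\C)$ viewed inside the locally convex space of finite signed measures with the weak topology. To define $\mathcal K$, I will bound the increments of a solution of~\eqref{eq:HeavyLimitFixedQ} by Gronwall. For $t,s\in[0,T]$, this gives $|x_t-x_s|\le C_T\big(1+|x_0|+\Vert Z^Q\Vert_{L^2}\big)|t-s|^{1/2}+\lambda|W_t-W_s|$, with $C_T$ independent of $Q$ since $|m_Q|\le|\mu|\Vert S\Vert_\infty$. By~\eqref{eq:ZTailUniform}, the tail of $\Vert Z^Q\Vert_{L^2}$ is bounded uniformly in $Q$. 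So for each $\eta>0$ there is a compact $A_\eta\subset\C$ with $\Phi(Q)(A_\eta^c)\le\eta$ for every $Q$. Let $\mathcal K$ be the set of all $Q$ with $Q(A_{1/m}^c)\le 1/m$ for all $m$. This set is convex, closed and tight, hence compact by Prokhorov, and $\Phi(\M_1^+(\C))\subset\mathcal K$.

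For continuity, let $Q_n\to Q$ weakly. By Skorokhod's representation I realise i.i.d.\ pairs $(y^k_n,y^k)$ with $y^k_n\to y^k$ uniformly a.s., all independent of $(\Gamma,\varepsilon)$. I split $Z^{Q_n}-Z^Q$ into the sum over $k\le N$, which tends to $0$ a.s.\ in $L^2$, and the tails $\sum_{k>N}$. The tails are small in probability uniformly in $n$: conditionally on $\Gamma_{N}$, they are again LePage-type series with coefficients $\Gamma_k^{-1/\alpha}\le\Gamma_N^{-1/\alpha}$ and bounded $v_k$. Hence $Z^{Q_n}\to Z^Q$ in probability in $L^1(0,T)$. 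Since also $m_{Q_n}\to m_Q$, a Gronwall estimate with the same $x_0,W$ gives $x^n\to x$ in probability in $\C$, so $\Phi(Q_n)\to\Phi(Q)$. Schauder--Tychonoff then yields a fixed point.

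\emph{Uniqueness (ii).} For $f(x)=-\kappa x+c$, the solution is an affine function of the forcing: $x=\mathcal L(m_Q+Z^Q)+\text{(terms in }x_0,W)$, where $(\mathcal L g)_t=\int_0^te^{-\kappa(t-s)}g_s\,ds$ and $\sup_{s\le t}|\mathcal L g_s|\le e^{|\kappa|T}\Vert g\Vert_{L^1(0,t)}$. Let $Q,Q'$ be two fixed points and fix $t\le T$. I work with the truncated cost
\[
\rho_t(Q,Q')=\inf_\pi \Exp_\pi\big[(\sup_{s\le t}|y_s-y'_s|\wedge1)^\alpha\big],
\]
and pick $(y^k,y'^k)$ i.i.d.\ from a near-optimal coupling $\pi$.

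Since $S$ is bounded and Lipschitz, $\Vert S(y)-S(y')\Vert_{L^2(0,t)}\le C\sqrt t\,(\sup|y-y'|\wedge1)$. Applying Lemma~\ref{lem:LePageTail} to $v_k=S(y^k)-S(y'^k)$ then gives $\Exp[(\Vert Z^Q-Z^{Q'}\Vert_{L^1(0,t)}\wedge1)^\alpha]\le C\,\psi(C t^{\alpha}\rho_t)$. The mean term satisfies $|m_Q-m_{Q'}|\le C\,\Exp_\pi[\sup|y-y'|\wedge1]\le C\rho_t^{1/\alpha}$ by Jensen, since $\alpha\ge1$. Its contribution to the $\alpha$-th power is therefore $\le Ct^\alpha\rho_t$. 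Combining the two contributions with $(a+b)\wedge1\le(a\wedge1)+(b\wedge1)$ and convexity of $u\mapsto u^\alpha$ (here $\alpha\ge1$ is used again), the coupled solutions satisfy
\[
\rho_t(Q,Q')\le C\,t^{\alpha/2}\rho_t\big(1+\log^+(1/\rho_t)\big).
\]
This is an Osgood-type inequality. For $t$ small, $Ct^{\alpha/2}$ is small, so either $\rho_t=0$ or $\rho_t\ge e^{-1/(2Ct^{\alpha/2})}$.

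\emph{Main obstacle.} Closing this alternative is the hardest step, because the $\log$ loss in $\psi$ prevents a plain contraction. I would first try to exclude the second branch by continuity in $t$: $\rho_t\to0$ as $t\to0$, because both processes start from the same $x_0$. I would then bootstrap on successive short intervals $[t_j,t_{j+1}]$, restarting from coinciding paths. If that fails, I would instead derive a differential inequality $\dot\rho\le C\rho\log(1/\rho)$ and invoke Osgood's lemma, which applies since $\int_0 dr/(r\log(1/r))=\infty$. This is also where the restriction to $\alpha\ge1$ and to affine $f$ enter: they are needed for the $\wedge1$ truncation to commute with the linear solution map and with the $\alpha$-th power.
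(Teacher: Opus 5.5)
Your existence argument is essentially the paper's: Schauder--Tychonoff on a tight, closed, convex set, and continuity of $\Phi$ via a Skorokhod coupling and the LePage tail bound. The paper applies Lemma~\ref{lem:LePageTail} directly to $v_k=S(y^k_\ell)-S(y^k)$ instead of splitting the series. Your increment bound should also carry $\sup_u|W_u|$ in the drift term, which is harmless.

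The uniqueness part has a genuine gap. Your inequality $\rho_t\le Ct^{\alpha/2}\rho_t(1+\log^+(1/\rho_t))$ holds at a single time, and you read it backwards. For $\rho_t>0$ it is equivalent to $1\le Ct^{\alpha/2}(1+\log(1/\rho_t))$, i.e.\ $\rho_t\le e^{1-1/(Ct^{\alpha/2})}$. It excludes large values and is automatically satisfied by every sufficiently small positive $\rho_t$; this is exactly the logarithmic loss. Consequences:
\begin{enumerate}
\item Continuity ($\rho_t\to0$ as $t\to0$) is consistent with the inequality and excludes nothing.
\item The bootstrap on short intervals never starts, since you never obtain $\rho=0$ on the first interval.
\item The inequality is not of Osgood type. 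Osgood's lemma needs a Volterra inequality $\rho(t)\le K\psi\big(K\int_0^t\rho(s)\,ds\big)$.
\end{enumerate}
Your fallback, deriving $\dot\rho\le C\rho\log(1/\rho)$, cannot be carried out with your cost. There $\rho_t$ is an infimum over couplings chosen separately for each $t$, while the output at time $t$ depends on a \emph{single} coupling of the whole input paths, evaluated at all $s\le t$. For a coupling near-optimal at time $t$, its cost at earlier times $s$ is not bounded by $\rho_s$.

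The missing idea is to replace the optimal coupling by a self-consistent one.
\begin{enumerate}
\item Let $\Pi$ be the convex compact set of couplings of $Q$ and $Q'$. Let $\mathcal T\pi$ be the law of $(x,x')$ driven by i.i.d.\ $(y^k,y'^k)\sim\pi$ with the same $(\Gamma_k,\varepsilon_k,x_0,W)$.
\item Since $Q,Q'$ are fixed points, $\mathcal T(\Pi)\subset\Pi$. $\mathcal T$ is continuous by the argument of (i), so Schauder--Tychonoff gives $\pi=\mathcal T\pi$.
\item Use the pointwise cost $\rho(t):=\Exp_\pi[(|y_t-y'_t|\wedge1)^\alpha]$, with no supremum and no infimum. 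The affine drift gives
\[
\delta x_t=D_t+C_\alpha\sum_k\varepsilon_k\Gamma_k^{-1/\alpha}b^k_t,\qquad b^k_t=\int_0^te^{-\kappa(t-s)}\big(S(y^k_s)-S(y'^k_s)\big)\,ds.
\]
\item Apply Lemma~\ref{lem:LePageTail} with $H=\R$ at fixed $t$. Jensen for $\alpha\ge1$ gives $|D_t|^\alpha+\Exp|b^1_t|^\alpha\le C\int_0^t\rho$; this convexity step is where $\alpha\ge1$ is really used.
\item Because $\mathcal T\pi=\pi$, the output cost is the same function $\rho$. This yields $\rho(t)\le K\psi\big(K\int_0^t\rho(s)\,ds\big)$, and Osgood's lemma gives $\rho\equiv0$.
\end{enumerate}
Alternatively, iterate $\mathcal T$ from any coupling and dominate the costs by the monotone Picard iterates of this Volterra inequality, which decrease to $0$. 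Either way, the key is a single coupling whose cost is controlled simultaneously at all times. Your time-by-time infimum cannot provide this.
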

The proof of Theorem~\ref{thm:HeavyWellPosed} is given in Appendix~\ref{app:ProofsHeavy}. The conjectured limit thus exists for every $\alpha\in(0,2)$, and is unique for $\alpha\in[1,2)$ when $f$ is affine; the simulations of Section~\ref{sec:Numerics} show that the network statistics agree with it. The convergence of the network to this limit remains to be proved.

\subsection{Open problems and possible strategies}\label{sec:HeavyOpen}
The conjecture is mathematically largely open. First, it seems relatively clear that heavy-tailed couplings challenge most of the existing approaches reviewed for the weakly or strongly heterogeneous cases. The approach based on large deviations for strongly heterogeneous couplings fails immediately, because it relies on integrating the couplings out of the Girsanov density $\E_J\exp\{\int G\,dW-\frac12\int G^2\}$, but these terms are not integrable for heavy-tailed couplings. Any possible large-deviation principle for these systems would be vastly distinct, with a different speed and a rate function of a different nature, because most deviations are due to a few large couplings (possibly similar to what has been established for the spectrum of heavy-tailed random matrices~\cite{bordenave-caputo:14}).

In the case of strongly heterogeneous couplings, assuming linear interactions was another method to make the problem more tractable~\cite{dembo-lubetzky-zeitouni:21}. However, it seems to us that for heavy-tailed couplings, the result of universality strongly relies on the boundedness of $S$, because the norm of a heavy-tailed coupling matrix, and for symmetric couplings its largest eigenvalues, grow with $n$~\cite{ben-arous-guionnet:08,bordenave-caputo-chafai:11} so that solutions may grow, on a fixed time interval, faster and faster as $n$ increases, with no bound uniform in $n$.

Coupling methods used for weakly heterogeneous synapses also present difficulties in the heavy-tailed setting. In particular, the Gronwall arguments of Appendix~\ref{app:ProofWeak} involve the operator norm of the coupling matrix and the row sums $\sum_j|J_{ij}|$ of its large entries, which would not be bounded uniformly in $n$.

Finally, well-posedness of the self-consistent equation is also unclear: a natural Lipschitz constant of $Q\mapsto Z^Q$ is $\sum_k\Gamma_k^{-1/\alpha}$, which is a.s. finite only for $\alpha<1$ and has no finite mean, so that the usual Wasserstein contraction arguments do not apply directly. This difficulty can be partly circumvented by exploiting the stable scaling of the LePage series instead of a Lipschitz constant (Section~\ref{sec:HeavyWellPosed}), and completing the proof of uniqueness is open. 

\medskip

As done in the weakly heterogeneous setting, a possible approach could be to use truncation arguments. In detail, a method could split the couplings at a level $\varepsilon>0$. The couplings larger than $\varepsilon$ form a sparse directed graph with Poisson$(\varepsilon^{-\alpha})$ in-degrees, which converges locally to a Poisson weighted infinite tree; at fixed $\varepsilon$, the theory of interacting diffusions on sparse graphs and their local limits should apply~\cite{lacker-ramanan-wu:23,lacker-ramanan-wu:23b}. The couplings smaller than $\varepsilon$ produce a stable input whose scale could vanish as $\varepsilon\to0$ from Lemma~\ref{lem:LePageTail}. The same lemma controls the convergence of the truncated self-consistent equation to the full one. This would give the convergence of the averaged law of a single neuron, and then of the empirical measure, since two given neurons have disjoint large-weight neighborhoods with high probability.

Finally, in cases where existence and uniqueness of the limit is assured, an interesting approach would be to explore the large-weight neighborhood of a neuron up to a given depth, and couple it with the Poisson weighted infinite tree. This approach would require controlling estimates of the error, with costs possibly similar to the term $\Exp[(|\delta x_t|\wedge1)^\alpha]$ that comes out in the proof of Theorem~\ref{thm:HeavyWellPosed}.

\section{Discussion}\label{sec:Discussion}
Forty years after Geman and Hwang, the McKean--Vlasov class for weakly heterogeneous couplings is, on finite time intervals, essentially established, and the Gaussian self-consistent class for strongly heterogeneous couplings is largely established for linear interactions, but only partially for the nonlinear interactions of neural networks. The heavy-tailed class constitutes a new frontier for this problem. In detail, for weakly heterogeneous couplings, the convergence of the empirical measure to the McKean--Vlasov limit is now established for every scaling $n=o(a_n)$, in probability with respect to the couplings under a fourth moment, almost surely for nested i.i.d.\ arrays (Proposition~\ref{prop:OpNorm}), and in probability under slightly more than a second moment (Proposition~\ref{prop:Truncation}). {The same estimate also gives the quenched statements of part~(B) for fixed neurons.} We expect that proving almost sure convergence under weaker moments is mostly technical, and improving rates of convergence constitute exciting avenues in particular in the perspective of the remarkable recent developments~\cite{lacker:23,lacker-leflem:23,arnese-lacker:26,lacker-yeung-zhou:24}. 

For strongly heterogeneous couplings, the universality is now established for linear interactions, in the quenched setting with exponential tails~\cite{dembo-lubetzky-zeitouni:21}, and for averaged observables~\cite{dembo-gheissari:21}, and we prove it here for nonlinear interactions of neural networks with independent sub-Gaussian couplings (Theorem~\ref{thm:Convergence4}); averaged propagation of chaos follows by exchangeability. The quenched statements of part~(B) for nonlinear interactions are open, even for Gaussian couplings, as are couplings with heavier tails but finite variance (log-normal or power-law, for which the simulations of Section~\ref{sec:Numerics} support the conjecture), symmetric or correlated couplings, and the fluctuations around the limit.

For heavy-tailed couplings, the mathematical formulation of the conjecture is original to this paper, and we have provided a conjectured limit given by a self-consistent equation driven by an $\alpha$-stable process. We have also shown that the limit has a solution for every $\alpha\in(0,2)$, and this solution is unique for $\alpha\in[1,2)$ when $f$ is affine (Theorem~\ref{thm:HeavyWellPosed}). The convergence of the network to this limit, averaged or quenched, is open, as are uniqueness for $\alpha<1$ or nonlinear $f$, and the description of the random quenched law of a single neuron predicted by part~(B), but our extensive numerical simulations support the conjecture. Finally, in all regimes, all rigorous results hold on fixed time intervals, and questions related to uniform-in-time convergence, the order of the limits $n\to\infty$ and $t\to\infty$ remain open to the best of our knowledge. 

The conjecture of Section~\ref{sec:Conjecture} covers couplings whose row sums obey a law of large numbers, a central limit theorem, or a generalized central limit theorem. This leaves out a whole world of scalings that support distinct regimes of behavior. An example which has received attention recently is that of strongly coupled networks. Contrasting with all the regimes discussed in the universality conjecture where couplings are scaled for the total input $\sum_jJ_{ij}S(x^j_t)$ to remain bounded, these networks are such that
\[
J_{ij}=\gamma(n)\,\frac{g_{ij}}{n},\qquad \gamma(n)\xrightarrow[n\to\infty]{}\infty,
\]
so that the law of large numbers no longer necessarily produces a finite interaction term: the mean input $\gamma(n)\,\bar g\,\Exp[S(x_t)]$ would diverge, except if $\Exp[S(x_t)]=0$. This regime lies outside the Geman--Hwang classification. In computational neuroscience, a central paper~\cite{vanvreeswijk-sompolinsky:96} (see also~\cite{vanvreeswijk-sompolinsky:98,brunel:00,renart-etal:10,rosenbaum-doiron:14,barral-reyes:16}) suggested that in these regimes the network quickly converges, under specific conditions, on a regime where the inputs are balanced, corresponding to $\Exp[S(x_t)]=0$. Mathematically, questions related to this regime were studied in \cite{quininao-touboul:20} for a particular example, and then generalized and formalized in~\cite{quininao-touboul:25}, both in the case of deterministic couplings. Under appropriate regularity and stability conditions, the system collapses at very fast timescales on a balance manifold, which is a set of measures where the interaction vanishes. Subsequent works relevant to this problem include the derivation of a nonlocal reaction--diffusion FitzHugh--Nagumo system from the mesoscopic kinetic equation {(see~\cite{mischler-quininao-touboul:16} for this kinetic model)} in the strong local interaction limit using relative entropy~\cite{crevat-faye-filbet:19},  explicit, uniform-in-time Wasserstein estimates~\cite{blaustein-filbet:22}, strong and quantitative convergence for spatially extended networks~\cite{blaustein:22}, or $L^\infty$ estimates~\cite{blaustein-bouin:22}.

Another regime that is largely open would be the regime $a_n=o(n)$ for couplings with finite variance, in which the fluctuations of the input diverge:
\[J_{ij}=\frac{\bar{J}}{n} + \frac{\gamma(n)}{\sqrt{n}}\,g_{ij},\qquad \gamma(n)\xrightarrow[n\to\infty]{}\infty.\] 
To our knowledge, it has not been studied.

Sparse graphs with bounded average degree also lie outside the conjecture, although $a_n$ is of order $n$ for them, because their row sums are asymptotically Poisson rather than Gaussian: the particles that are effectively interacting remain correlated in the limit, breaking the sto\ss zahlansatz. Local weak limits can however be derived~\cite{lacker-ramanan-wu:21,lacker-ramanan-wu:23,lacker-ramanan-wu:23b,ganguly-ramanan:24}, see~\cite{ramanan:23} for a survey. 

Forty years after its formulation, the program of Geman and Hwang has thus seen important advances, and keeps raising new questions.

{\bf Acknowledgments and AI statement.} The author is deeply grateful to Tanguy Cabana, whose PhD thesis~\cite{cabana2016large} laid the groundwork for the strongly heterogeneous part of this paper. Many of the ideas developed here grew out of our fun discussions and of his heroic preliminary work, and it was a pleasure to think about these questions together. The numerical simulations were first developed in custom Matlab code. The author then used the assistance of Anthropic's Claude (Opus 5.5) to translate this code to Python and optimize it to reduce its runtime. This made it possible to run the simulations presented here in a reasonable time (about 10 hours of parallel computations on a MacBook Pro (M5)). Claude was also used throughout the paper to improve the writing, find appropriate references, validate proofs, and propose refinements of some of the mathematical statements and of their proofs.

\appendix
\setcounter{figure}{0}
\renewcommand\thefigure{S.\arabic{figure}}

\bigskip 
\centerline{\Large \bf Appendix}
\section{Weakly Heterogeneous Weights}\label{app:ProofWeak}
\begin{proof}[Proof of Proposition~\ref{prop:OpNorm}]
This proof relies on controlling the distance between the network equation and the mean-field system through the operator norm of the random coupling part (in the spirit of coupling arguments for random graphs~\cite{delattre-giacomin-lucon:16,oliveira-reis:19}), which in turn is controlled using Lata{\l}a's inequality on the operator norm of matrices with independent centered entries~\cite{latala:05} and bounded fourth moments. 

{Throughout, $\Vert\cdot\Vert$ is the Euclidean norm on $\R^n$, $Q_0$ is the law of the McKean--Vlasov process~\eqref{eq:MVlimit}, and the systems are coupled: for every $n$, the initial conditions and Brownian motions of the $n$ neurons are the first $n$ elements of a fixed i.i.d.\ sequence $(X^i_0,W^i)_{i\geq1}$, independent of the couplings.} We use coupling techniques to compare the empirical measure $\hat \mu_n$ of the randomly coupled network to a mean-field limit, and introduce two coupled systems that make the comparisons easier: the empirical measure $\hat{\mu}_n^{mf}$ of a network with deterministic coupling:
\[
dX^{\mathrm{mf},i}_t=\Big(f(X^{\mathrm{mf},i}_t)+\frac{\bar J}{n}\sum_{j=1}^nS(X^{\mathrm{mf},j}_t)\Big)dt+dW^i_t,\qquad X^{\mathrm{mf},i}_0=X^i_0,
\]
and then compare the latter to the empirical measure of $n$ independent copies of the McKean--Vlasov process~\eqref{eq:MVlimit}, $\bar\mu_n$, defined through the coupling:
\[
d\bar X^i_t=\big(f(\bar X^i_t)+m_{{Q_0}}(t)\big)dt+dW^i_t,\qquad \bar X^i_0=X^i_0,\qquad m_{{Q_0}}(t)=\bar J\,\Exp\big[S(\bar X^1_t)\big].
\]
Pairing the $i$-th coordinates and using the Cauchy--Schwarz inequality, for any two such systems $\mathbf Y,\mathbf Z$ driven by the same initial conditions and Brownian motions, so that the noise cancels and $\mathbf Y-\mathbf Z$ is continuously differentiable in time, with $\mathbf Y_0-\mathbf Z_0=0$,
\begin{equation}\label{eq:OpNormPairing}
d^V_{T,2}\Big(\frac1n\sum_i\delta_{Y^i},\frac1n\sum_i\delta_{Z^i}\Big)^2\leq\frac1n\sum_{i=1}^n\sup_{t\leq T}|Y^i_t-Z^i_t|^2\leq\frac Tn\int_0^T\Big\Vert\frac{d}{dt}(\mathbf Y_t-\mathbf Z_t)\Big\Vert^2dt.
\end{equation}

Let $\Delta:=\mathbf X-\mathbf X^{\mathrm{mf}}$. Since $J=\frac{\bar J}{n}+\frac{\sigma}{\sqrt{a_n}}\hat J$ and both systems are driven by the same noise, the noise cancels and we have:
\[
\frac{d}{dt} \left(X^i_t-X^{mf,i}_t\right) =f(X^i_t)-f(X^{\mathrm{mf},i}_t)+\frac{\bar J}{n}\sum_{j=1}^n\big(S(X^j_t)-S(X^{\mathrm{mf},j}_t)\big)+\frac{\sigma}{\sqrt{a_n}}\big(\hat J\,S(\mathbf X_t)\big)_i .
\]
Since $f$ and $S$ are Lipschitz continuous and $S$ is bounded, we get $\Vert\dot\Delta_t\Vert\leq c\Vert\Delta_t\Vert+b$ with $b:= \sqrt{n}\,\frac{\sigma}{\sqrt{a_n}}\Vert S\Vert_\infty\Vert  \hat{J}\Vert_{\mathrm{op}}$ and $c=K (1+|\bar J|)$. Gronwall's lemma and $\Delta_0=0$ give $\Vert\Delta_t\Vert\leq bte^{ct}$, hence $\sup_{t\leq T}\Vert\dot\Delta_t\Vert\leq b(1+cT)e^{cT}\leq be^{2cT}$, and by~\eqref{eq:OpNormPairing}
\begin{equation}\label{eq:OpBound}
d^V_{T,2}\big(\hat\mu_n,\hat\mu_n^{\mathrm{mf}}\big)\leq \frac{T\sigma}{\sqrt{a_n}} \,\Vert S\Vert_\infty \,\Vert \hat{J}\Vert_{\mathrm{op}}e^{2cT},
\end{equation}
for every realization of the couplings and of the noise, proving the first point of the proposition. 

Classical results apply to the comparison of the constant coupling network and independent copies of the McKean--Vlasov process, which we briefly outline here. Thanks to the coupling of initial conditions and Brownian motions, the noise cancels again. 
\[
\begin{aligned}\frac{d}{dt} \left(X^{\mathrm{mf},i}_t-\bar X^i_t\right)&=f(X^{\mathrm{mf},i}_t)-f(\bar X^i_t)+\frac{\bar J}{n}\sum_{j=1}^n\big(S(X^{\mathrm{mf},j}_t)-S(\bar X^j_t)\big)\\&\quad+\frac{\bar J}{n} \sum_{j=1}^n\big(S(\bar X^j_t)-\Exp[S(\bar X^1_t)]\big)\end{aligned}
\]
and using Gronwall's lemma, similarly to the above estimates, yields:
\[
d^V_{T,2}\big(\hat\mu_n^{\mathrm{mf}},\bar\mu_n\big)\leq T|\bar J|e^{2cT}\sup_{t\leq T} \left | \frac1 n \sum_{j=1}^nS(\bar X^j_t)-\Exp[S(\bar X^1_t)]\right |=:T|\bar J|e^{2cT}\sup_{t\leq T} \vert R_n(t)\vert, 
\]
and classical results assure that the right hand side vanishes almost surely when $n\to \infty$ (e.g., by application of a functional strong law of large numbers~\cite{mourier:53} or a standard quadratic control). 

We are now in a position to conclude the proof. The triangular inequality gives:
\begin{equation}\label{eq:triang}
d^V_{T,2}\big(\hat\mu_n,{Q_0} \big)\leq d^V_{T,2}\big(\hat\mu_n,\hat\mu_n^{mf})+ d^V_{T,2}\big(\hat\mu_n^{mf},\bar{\mu}_n)+ d^V_{T,2}\big(\bar{\mu}_n,{Q_0}\big).
\end{equation}
The last term vanishes almost surely as $n\to\infty$. Indeed, we know that since $\bar X^j$ are i.i.d.\ with law ${Q_0}$, $\bar\mu_n\to {Q_0}$ weakly almost surely (Varadarajan's theorem). Moreover, noting that $\bar X^1$ is square integrable, the strong law of large numbers implies that $\int\sup_t|x_t|^2d\bar\mu_n\to\Exp[\sup_t|\bar X^1_t|^2]$ almost surely. Finally, since weak convergence and convergence of second moments is equivalent to convergence in $d^V_{T,2}$~\cite{villani:09}, $d^V_{T,2}(\bar\mu_n,{Q_0})\to0$ almost surely.

The last two terms of~\eqref{eq:triang} therefore do not depend on the couplings and tend to $0$ when $n\to \infty$ $\P$ almost surely. By~\eqref{eq:OpBound}, the first term is dominated by the operator norm of $\hat{J}$, and Lata{\l}a's inequality for matrices with independent centered entries~\cite{latala:05} gives
\[
\E_J\Vert\hat J\Vert_{\mathrm{op}}\leq C\Big(\max_i\Big(\sum_j\E_J\hat J_{ij}^2\Big)^{\frac12}+\max_j\Big(\sum_i\E_J\hat J_{ij}^2\Big)^{\frac12}+\Big(\sum_{i,j}\E_J\hat J_{ij}^4\Big)^{\frac14}\Big)\leq C\sqrt n,
\]
implying that $\E_J d^V_{T,2}\big(\hat\mu_n,\hat\mu_n^{mf})\leq C_T \sqrt{n/a_n}$, which tends to $0$ when $n=o(a_n)$, so $d^V_{T,2}\big(\hat\mu_n,\hat\mu_n^{mf})$ tends to $0$ in $\mathcal P_J$ probability. Altogether, we find that $d^V_{T,2}(\hat\mu_n,{Q_0})\to0$ in probability under the joint law of the couplings and the noise. Precisely, on the event $\{\Vert \sigma \hat{J}/\sqrt{a_n}\Vert_{\mathrm{op}}\leq\delta\}$, whose $\mathcal P_J$-probability tends to $1$ for every $\delta>0$, one has $d^V_{T,2}(\hat\mu_n,{Q_0})\leq C_T\delta+r_n$, where $r_n$ does not depend on the couplings and tends to $0$, $\P$-almost surely. If the $\hat J_{ij}$ are the entries of an infinite i.i.d.\ array with a law independent of $n$, the coupling matrices being the upper-left $n\times n$ submatrices of a doubly infinite array $(\hat J_{ij})_{i,j\geq1}$ of i.i.d.\ random variables, then $\Vert\hat J\Vert_{\mathrm{op}}/\sqrt n\to2$ almost surely~\cite{yin-bai-krishnaiah:88}, hence $\Vert \frac{\sigma}{\sqrt{a_n}}\hat J \Vert_{\mathrm{op}}\to0$, and the convergence holds $\P$-almost surely for $\mathcal P_J$-almost every sequence of coupling matrices (quenched convergence).

{Finally, the quenched statements for fixed neurons follow from the same estimates. The averaged law of the network is exchangeable, so that, by~\eqref{eq:OpNormPairing}, the proof of~\eqref{eq:OpBound} and the concavity of $x\mapsto x\wedge1$,
\[
\begin{aligned}\E_J\Exp\Big[\sup_{t\leq T}|X^{1}_t-X^{\mathrm{mf},1}_t|^2\wedge1\Big]&\leq\E_J\Exp\Big[\Big(\frac1n\sum_{i=1}^n\sup_{t\leq T}|X^{i}_t-X^{\mathrm{mf},i}_t|^2\Big)\wedge1\Big]\\&\leq\E_J\Big[\Big(C_T\frac{\sigma^2}{a_n}\Vert\hat J\Vert_{\mathrm{op}}^2\Big)\wedge1\Big]\xrightarrow[n\to\infty]{}0,\end{aligned}
\]
and similarly $\Exp\big[\sup_{t\leq T}|X^{\mathrm{mf},1}_t-\bar X^1_t|^2\wedge1\big]\to0$, independently of the couplings. Since $\bar X^1\sim Q_0$ is independent of the couplings, $\E_J\,d\big(Q^1_n(J),Q_0\big)\to0$, and the same argument applied to $(X^1,X^2)$ and $(\bar X^1,\bar X^2)$ gives $\E_J\,d\big(Q^{12}_n(J),Q_0\otimes Q_0\big)\to0$.}
\end{proof}

{\begin{proof}[Proof of Proposition~\ref{prop:Truncation}]
We use the notation of the proof of Proposition~\ref{prop:OpNorm}, and resume the proof from the inequality $d^V_{T,2}(\hat\mu_n,{Q_0})\leq d^V_{T,2}(\hat\mu_n,\hat\mu^{\mathrm{mf}}_n)+r_n$ with $r_n:=T|\bar J|e^{2cT}\sup_{t\leq T}|R_n(t)|+d^V_{T,2}(\bar\mu_n,{Q_0})$, which does not depend on the couplings and tends to $0$, $\P$-almost surely. Under a second moment condition only, the convergence of $d^V_{T,2}(\hat\mu_n,\hat\mu^{\mathrm{mf}}_n)$ is slightly more subtle, and we proceed by a truncation argument. 

In detail, using the arguments from the proof of Proposition~\ref{prop:OpNorm}, noting $\Gamma:=\sup_{t\leq T}\Vert {\tfrac{\sigma}{\sqrt{a_n}}\hat J}\,S(\mathbf X_t)\Vert$, we get by a simple application of Gronwall's lemma that $\Vert \mathbf X-\mathbf X^{\mathrm{mf}}\Vert\leq\Gamma te^{ct}$, so that $\sup_{t\leq T}\Vert\frac{d}{dt}(\mathbf X-\mathbf X^{\mathrm{mf}})\Vert\leq\Gamma(1+cTe^{cT})\leq\Gamma e^{2cT}$, and, by~\eqref{eq:OpNormPairing},
\[
d^V_{T,2}\big(\hat\mu_n,\hat\mu_n^{\mathrm{mf}}\big)\leq Te^{2cT}\,\frac{\Gamma}{\sqrt n}
\]
for every realization of the couplings and of the noise.

We then truncate the weights, and define
\[
A_{ij}:=\hat J_{ij}\mathbf 1_{|\hat J_{ij}|\leq K_n}-\E_J\big[\hat J_{ij}\mathbf 1_{|\hat J_{ij}|\leq K_n}\big],\qquad B_{ij}:=\hat J_{ij}\mathbf 1_{|\hat J_{ij}|>K_n}-\E_J\big[\hat J_{ij}\mathbf 1_{|\hat J_{ij}|>K_n}\big].
\]
so that (given that $\hat J_{ij}$ are centered), $\hat J=A+B$. For $v=S(\mathbf X_t)$, we control $\Vert \hat J v\Vert$ using this operator decomposition. We have $\Vert Av\Vert\leq\sqrt n\Vert S\Vert_\infty\Vert A\Vert_{\mathrm{op}}$, and $|(Bv)_i|\leq\Vert S\Vert_\infty\rho_i$ with $\rho_i:=\sum_{j}|B_{ij}|$, thus:
\[
d^V_{T,2}\big(\hat\mu_n,\hat\mu_n^{\mathrm{mf}}\big)\leq\Lambda_n:=Te^{2cT}\sigma\Vert S\Vert_\infty\bigg(\frac{\Vert A\Vert_{\mathrm{op}}}{\sqrt{a_n}}+\Big(\frac1{na_n}\sum_{i=1}^n\rho_i^2\Big)^{1/2}\bigg),
\]
which depends only on the couplings. We show that both terms tend to $0$ in $\mathcal P_J$-probability.

For the truncated part, we will use Bandeira and van Handel's sharp nonasymptotic bounds for the norm of $A$. We note that its coefficients, $A_{ij}$, are independent, centered random variables bounded by $2K_n$, and $\E_JA_{ij}^2\leq\E_J\hat J_{ij}^2\leq1$. Bandeira and van Handel~\cite{bandeira-vanhandel:16} provide sharp estimates in particular for symmetric matrices with bounded entries. To apply those results, we introduce a symmetric $2n\times2n$ matrix $D:=\left(\begin{smallmatrix}0&A\\A^\top&0\end{smallmatrix}\right)$, whose norm is also $\Vert A\Vert_{\mathrm{op}}$, and that has independent centered entries on and above the diagonal, and its parameters $\tilde\sigma:=\max_k\big(\sum_l\E_JD_{kl}^2\big)^{1/2}$ and $\tilde\sigma_*:=\max_{k,l}\Vert D_{kl}\Vert_\infty$ satisfy $\tilde\sigma\leq\sqrt n$ and $\tilde\sigma_*\leq2K_n$. By~\cite[Corollary~3.12 and Remark~3.13]{bandeira-vanhandel:16} with $\varepsilon=1/2$, there is a universal constant $\tilde c$ such that, for all $t\geq0$,
\[
\mathcal P_J\big(\Vert A\Vert_{\mathrm{op}}\geq3\sqrt2\,\sqrt n+t\big)\leq2n\,e^{-t^2/(4\tilde cK_n^2)}.
\]
With $t=2K_n\sqrt{2\tilde c\log(2n)}$, we obtain that, with $\mathcal P_J$-probability at least $1-(2n)^{-1}$,
\[
\frac{\Vert A\Vert_{\mathrm{op}}}{\sqrt{a_n}}\leq3\sqrt2\sqrt{\frac n{a_n}}+2\sqrt{2\tilde c}\,\frac{K_n\sqrt{\log(2n)}}{\sqrt{a_n}},
\]
and the right-hand side tends to $0$ by $n=o(a_n)$ and~\eqref{eq:TruncCond}.

For the control of the term arising with large couplings, we note $\eta_{ij}:=|\hat J_{ij}|\mathbf 1_{|\hat J_{ij}|>K_n}\leq\hat J_{ij}^2\mathbf 1_{|\hat J_{ij}|>K_n}/K_n$, thus $\E_J\eta_{ij}\leq\varepsilon_n(K_n)/K_n$ and $\E_J\eta_{ij}^2\leq\varepsilon_n(K_n)$. Moreover $|B_{ij}|\leq\eta_{ij}+\E_J\eta_{ij}$, so that $\rho_i\leq\sum_j\eta_{ij}+n\varepsilon_n(K_n)/K_n$. By independence of the entries of a row,
\[
\E_J\Big[\Big(\sum_j\eta_{ij}\Big)^2\Big]=\sum_j\E_J\eta_{ij}^2+\sum_{j\neq k}\E_J\eta_{ij}\,\E_J\eta_{ik}\leq n\varepsilon_n(K_n)+\frac{n^2\varepsilon_n(K_n)^2}{K_n^2},
\]
hence $\E_J\rho_i^2\leq2n\varepsilon_n(K_n)+4n^2\varepsilon_n(K_n)^2/K_n^2$ and
\[
\E_J\Big[\frac1{na_n}\sum_{i=1}^n\rho_i^2\Big]\leq2\varepsilon_n(K_n)\,\frac n{a_n}+4\bigg(\frac{n\,\varepsilon_n(K_n)}{K_n\sqrt{a_n}}\bigg)^2.
\]
The first term tends to $0$ since $\varepsilon_n\leq1$ and $n=o(a_n)$, and the second by assumption~\eqref{eq:TruncCond}. Markov's inequality then gives $\frac1{na_n}\sum_i\rho_i^2\to0$ in $\mathcal P_J$-probability, so $\Lambda_n\to0$ in $\mathcal P_J$-probability, which proves the first part of the proposition.

Finally, we confirm that the sufficient conditions provided in the proposition are valid. (i) Since $\varepsilon_n\leq1$, the choice $K_n:=n^{1/2}(\log n)^{-1/4}$ gives $K_n\sqrt{\log n}/\sqrt{a_n}=(n\sqrt{\log n}/a_n)^{1/2}$ and $n\varepsilon_n(K_n)/(K_n\sqrt{a_n})\leq(n\sqrt{\log n}/a_n)^{1/2}$, which both tend to $0$ when $a_n/(n\sqrt{\log n})\to\infty$. (ii) Let $K_n:=(n/\log n)^{1/2}$. On $\{|\hat J_{ij}|>K_n\}$, $\sqrt{\log(e+|\hat J_{ij}|)}\geq\sqrt{\log(e+K_n)}$, so that $\varepsilon_n(K_n)\leq M/\sqrt{\log(e+K_n)}$, and we get $K_n\sqrt{\log n}/\sqrt{a_n}=(n/a_n)^{1/2}\to0$. Moreover, for $n\geq3$, $\log(e+K_n)\geq\log K_n=\frac12(\log n-\log\log n)\geq\frac14\log n$, since $x/2-\log x\geq1-\log2>0$ for all $x>0$; hence $\varepsilon_n(K_n)\leq2M(\log n)^{-1/2}$ and $n\varepsilon_n(K_n)/(K_n\sqrt{a_n})\leq2M(n/a_n)^{1/2}\to0$.
\end{proof}}

\section{Proof of Theorem~\ref{thm:Convergence4}}\label{sec:NonGaussian}\label{app:ProofStrong}

Throughout this section, $J_{ij}=\frac{\bar J}{n}+\frac{\sigma}{\sqrt n}\hat J_{ij}$, where the $\hat J_{ij}$ satisfy~\eqref{hyp:sub-Gaussian-tail}, {$f$ and $S$ are $K$-Lipschitz}, that $S$ is bounded (and, without loss of generality, up to a rescaling of $\sigma$, assume that $\Vert S\Vert_\infty\leq1$) and $\mu_0$ has a finite second moment. We denote by $\hat Q^n_\sigma$ the averaged law of the network on $\C^n$, $\C:=C([0,T],\R)$, by $\tilde J_{ij}$ i.i.d.\ standard Gaussian variables, and by $\tilde Q^n_\sigma$ the averaged law of the network with couplings $\frac{\bar J}{n}+\frac{\sigma}{\sqrt n}\tilde J_{ij}$. Let $P$ be the law on $\C$ of the {uncoupled process}
\begin{equation}\label{eq:Uncoupled}
dX_t={f(X_t)}\,dt+dW_t,\qquad X_0\sim\mu_0,
\end{equation}
and, for $x\in\C$,
\begin{equation}\label{def:W}
W_t(x):=x_t-x_0{-\int_0^tf(x_s)\,ds},
\end{equation}
which is the Brownian motion driving~\eqref{eq:Uncoupled} under $P$. Throughout, $C$ denotes a constant depending only on $T,\sigma,\bar J,K$ and on the constants $\theta_0,D$ of~\eqref{hyp:sub-Gaussian-tail} (and on $\epsilon$ in the conclusion), which may change from line to line. The principle of the proof relies on showing that the Radon-Nikodym derivatives of the $n$-neurons network with general weights is arbitrarily close to that of the network with Gaussian weights, and to this end we use exact expressions of the densities and an original argument based on their relative entropy. These estimates, together with an exponential tightness property, provide the convergence result. 

\paragraph{\emph{Preliminary estimates on sub-Gaussian random variables:}}

We start by a few Gaussian calculus results that will be useful for the proof, and that are summarized in the following lemma. The proofs rely on~\cite{vershynin:18} (specifically, Prop.~2.5.2 in particular) together with elementary calculus. 

Consider $X$ a centered random variable with sub-Gaussian tails $\E e^{\theta_0X^2}\leq D$ ($\E$ denotes the expectation, because we have in mind the couplings here) and $g\sim N(0,1)$. These are referred to as preliminary formula (1-4) hereafter. 
\begin{enumerate}
\item There is $\kappa=\kappa(\theta_0,D)\geq\frac12$ such that $\E e^{sX}\leq e^{\kappa s^2}$ and $\E e^{sg}\leq e^{\kappa s^2}$ for all $s\in\R$.
\item If $Z_1,\dots,Z_m$ are independent, each satisfying (1), and $\beta\in\R^m$, then $Y=\sum_k\beta_kZ_k$ satisfies $\E e^{sY}\leq e^{\kappa vs^2}$ for all $s$, with $v=\Vert\beta\Vert^2$.
\item If $\E e^{sY}\leq e^{ws^2}$ for all $s$, then $\E e^{\lambda Y^2}\leq(1-4w\lambda)^{-1/2}$ for $0\leq\lambda<\frac1{4w}$, and $\E|Y|^p\leq C_pw^{p/2}$.
\item For $k\geq0$ and $b\geq0$, $\E\big[|Z|^ke^{b|Z|}\big]\leq C_ke^{2\kappa b^2}$ for $Z\in\{X,g\}$.
\end{enumerate}

Moreover, by~\cite[Thm.~4.4.5]{vershynin:18}, there is $C_0$ depending only on the sub-Gaussian norm of the entries such that $\Vert\hat J\Vert_{\mathrm{op}}\leq C_0(2\sqrt n+t)$ with probability at least $1-2e^{-t^2}$. This implies that, with $t=s\sqrt n$, $\mathcal P_J(\Vert\hat J\Vert_{\mathrm{op}}/\sqrt n>C_0(2+s))\leq2e^{-s^2n}\leq2e^{-s^2}$, so that for all $\lambda>0$, 
\[\sup_n\E\exp\big(\lambda\Vert\hat J\Vert_{\mathrm{op}}/\sqrt n\big)<\infty.\] 
The same holds for Gaussian couplings, but also $\E\Vert\tilde J\Vert_{\mathrm{op}}\leq2\sqrt n$, by~\cite[Thm.~2.13]{davidson-szarek:01}. Gaussian concentration thus implies that $\mathcal P_J\big(\Vert\tilde J\Vert_{\mathrm{op}}\geq(2+s)\sqrt n\big)\leq e^{-s^2n/2}$ for all $s\geq0$.

\medskip

The rest of the proof relies on estimates of the densities of $\hat Q^n_\sigma$ and $\tilde Q^n_\sigma$ with respect to $P^{\otimes n}$, which have an explicit structure. For paths $\mathbf x\in\C^n$, outside a $P^{\otimes n}$-null set, let
\[
A^i_j:=\int_0^TS(x^j_t)\,dW_t(x^i),\qquad M_{jk}:=\int_0^TS(x^j_t)S(x^k_t)\,dt,
\]
and let $J_i=(J_{i1},\dots,J_{in})$ be the $i$-th row of the coupling matrix. By Girsanov's theorem,
\[
\frac{dQ(J)}{dP^{\otimes n}}(\mathbf x)=\prod_{i=1}^n\exp\Big(\langle J_i,A^i\rangle-\frac12J_i^\top MJ_i\Big).
\]
In this product, the factors are independent with respect to the law of the couplings, because the term $i$ only depends on the $i$-th row of the coupling matrix. The averaged density thus conserves this product structure over $i$ of the averages of the factors. We want to compare densities for $J_i=\frac{\bar J}{n}\mathbf 1+\frac{\sigma}{\sqrt n}\xi$, where $\xi$ is either distributed as a row of $\hat J$ (for $\hat Q^n_\sigma$) or $\xi=g$ is a standard Gaussian vector (for $\tilde Q^n_\sigma$). In the density, the terms that do not involve the random part $\xi$ are identical. They thus disappear when considering the ratio of densities, which becomes:
\begin{equation}\label{eq:Bproduct}
\frac{d\hat Q^n_\sigma}{d\tilde Q^n_\sigma}(\mathbf x)=\prod_{i=1}^n\frac{\E\big[\Phi_i(\xi)\big]}{\E\big[\Phi_i(g)\big]},\qquad\text{where}\quad\Phi_i(z):=e^{\langle z,\alpha^i\rangle-\frac12z^\top\Gamma z},\quad z\in\R^n
\end{equation}
with {$\alpha^i:=\frac{\sigma}{\sqrt{n}}\big(A^i-\frac{\bar J}{n}M\mathbf 1\big)$, that is, $\alpha^i_j=\frac{\sigma}{\sqrt n}\big(A^i_j-\frac{\bar J}{n}\sum_{k}M_{jk}\big)$,} and $\Gamma=\frac{\sigma^2}{n}M$. 

In particular, $\hat Q^n_\sigma\ll\tilde Q^n_\sigma$, and to show that they converge to the same limit, it suffices to show that this density is close to $1$, which we quantify through the relative entropy $H(\hat Q^n_\sigma|\tilde Q^n_\sigma)$. In the sequel, we write $\Lambda_Z(\alpha,\Gamma):=\E\exp\big(\langle Z,\alpha\rangle-\frac12Z^\top\Gamma Z\big)$ for a random vector $Z$, so that the $i$-th factor is $\Lambda_\xi(\alpha^i,\Gamma)/\Lambda_g(\alpha^i,\Gamma)$.

\begin{lemma}\label{lem:Brow}
Let $\xi$ have independent coordinates satisfying~\eqref{hyp:sub-Gaussian-tail}, and let $g$ be a standard Gaussian vector. There exist $C,c<\infty$, depending only on $T$, $\sigma$, $\theta_0$ and $D$, such that, for every $n$, every $\alpha\in\R^n$ and every symmetric positive semi-definite matrix $\Gamma$ with $|\Gamma_{jk}|\leq\sigma^2T/n$,
\[
\begin{gathered}\Big|\log\frac{\Lambda_\xi(\alpha,\Gamma)}{\Lambda_g(\alpha,\Gamma)}\Big|\leq C\big(1+\Vert\alpha\Vert^2\big)\qquad\text{and}\\\Big|\frac{\Lambda_\xi(\alpha,\Gamma)}{\Lambda_g(\alpha,\Gamma)}-1\Big|\leq Ce^{c\Vert\alpha\Vert^2}\Big(\sum_{j=1}^n|\alpha_j|^3+n^{-1/2}\Big).\end{gathered}
\]
\end{lemma}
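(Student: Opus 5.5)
Both bounds will come from a Lindeberg replacement, done one coordinate at a time, applied to the functional $\Phi(z)=\exp(\langle z,\alpha\rangle-\frac12 z^\top\Gamma z)$. Two facts about $\Gamma$ will be used throughout. Positive semi-definiteness gives $\Phi(z)\le e^{\langle z,\alpha\rangle}$. The entry bound gives $|\Gamma_{jk}|\le\sigma^2T/n$, so each diagonal term $\Gamma_{jj}z_j^2$ is $O(z_j^2/n)$.

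\textbf{Step 1: the lower bound on $\Lambda_g$.} This is the easy half of the first inequality. Jensen's inequality gives $\log\Lambda_g\ge \E[\langle g,\alpha\rangle-\frac12 g^\top\Gamma g]=-\frac12\operatorname{tr}\Gamma\ge-\sigma^2T/2$. The same argument gives $\log\Lambda_\xi\ge-\sigma^2T/2$, since $\xi$ is centered with unit variance.

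\textbf{Step 2: the upper bound on $\Lambda_g$ and $\Lambda_\xi$.} Dropping the quadratic term and using preliminary formula (2), $\Lambda_Z\le\E e^{\langle Z,\alpha\rangle}\le e^{\kappa\Vert\alpha\Vert^2}$ for $Z\in\{\xi,g\}$. Combining with Step 1, both $\log\Lambda_\xi$ and $\log\Lambda_g$ lie in $[-C,\,\kappa\Vert\alpha\Vert^2]$. This gives the first inequality, $|\log(\Lambda_\xi/\Lambda_g)|\le C(1+\Vert\alpha\Vert^2)$.

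\textbf{Step 3: the hybrid decomposition.} For the second inequality, introduce the hybrid vectors $Z^{(k)}=(g_1,\dots,g_k,\xi_{k+1},\dots,\xi_n)$. Then
\[
\Lambda_\xi-\Lambda_g=\sum_{k=1}^n\E\big[\Phi(Z^{(k-1)})-\Phi(Z^{(k)})\big],
\]
and the $k$-th summand differs only through coordinate $k$.

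\textbf{Step 4: the one-coordinate estimate.} Write $w$ for the other coordinates, fixed at their hybrid values. As a function of $u=z_k$,
\[
\Phi=\exp\big(u\,(\alpha_k-(\Gamma w)_k)-\tfrac12\Gamma_{kk}u^2\big)\,\Phi(w\text{ with }u=0)=:F(u)\,R(w).
\]
Expand $F$ to second order around $u=0$. Since $\xi_k$ and $g_k$ share mean $0$ and variance $1$, the terms of order $0$, $1$ and $2$ cancel in expectation. Only the third-order remainder is left, and it is bounded by
\[
\E\big[|u|^3\,|\beta_k|^3\,e^{|u||\beta_k|}\big]\quad\text{plus terms in }\Gamma_{kk},
\]
where $\beta_k:=\alpha_k-(\Gamma w)_k$. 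The terms in $\Gamma_{kk}$ are of order $|\beta_k|\,\Gamma_{kk}$ and $\Gamma_{kk}^2$, and are treated the same way. This is the natural place for preliminary formula (4), which gives $\E[|u|^3e^{b|u|}]\le C e^{2\kappa b^2}$.

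\textbf{Step 5: the main obstacle.} The main difficulty is the random shift $(\Gamma w)_k=\sum_{j\ne k}\Gamma_{kj}w_j$. It is not small pointwise, only sub-Gaussian, with variance proxy $\sum_j\Gamma_{kj}^2\le\sigma^4T^2/n$. So $|\beta_k|^3\lesssim|\alpha_k|^3+|(\Gamma w)_k|^3$, where the second piece has moments $O(n^{-3/2})$. Summing over $k$ produces the $\sum_j|\alpha_j|^3$ term and an $n\cdot n^{-3/2}=n^{-1/2}$ term. The $\Gamma_{kk}$ contributions are $O(n^{-2})$ per coordinate, hence lower order.

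All these quantities are multiplied by $R(w)\le e^{\langle w,\alpha\rangle}$ and by the exponential factor $e^{|u||\beta_k|}$. I would control the expectation by Hölder's inequality together with preliminary formulas (2)–(3). This uses the sub-Gaussianity of $\langle w,\alpha\rangle$ and of $(\Gamma w)_k$ jointly. The resulting factor is $e^{c\Vert\alpha\Vert^2}$, with $c$ depending on $\kappa$ and $\sigma^2T$, since $\Vert\Gamma\Vert_{\mathrm{op}}\le\sigma^2T$.

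\textbf{Step 6: conclusion.} Steps 3–5 give $|\Lambda_\xi-\Lambda_g|\le Ce^{c\Vert\alpha\Vert^2}\big(\sum_j|\alpha_j|^3+n^{-1/2}\big)$. Dividing by $\Lambda_g\ge e^{-\sigma^2T/2}$ from Step 1 yields the second inequality.
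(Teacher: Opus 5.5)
Your proposal is correct and follows essentially the same route as the paper. The first bound comes from Jensen and positive semi-definiteness of $\Gamma$; the second from a one-coordinate Lindeberg replacement with the third-order Taylor remainder of $s\mapsto e^{s\beta-\frac12 s^2\Gamma_{kk}}$, preliminary estimate (4), sub-Gaussian moment bounds on the shift $(\Gamma w)_k$, Cauchy--Schwarz against $\E e^{2\langle w,\alpha\rangle}\le e^{4\kappa\Vert\alpha\Vert^2}$, and division by $\Lambda_g\ge e^{-\sigma^2T/2}$.

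Two bookkeeping points need fixing, and the paper handles both. First, the term $|\alpha_k|\Gamma_{kk}$ is not $O(n^{-2})$ per coordinate; it sums to at most $\sigma^2T\Vert\alpha\Vert/\sqrt n$, which is absorbed into $Ce^{c\Vert\alpha\Vert^2}n^{-1/2}$ using $\Vert\alpha\Vert\le e^{\Vert\alpha\Vert^2}$. Second, the bound $\E e^{\lambda(\Gamma w)_k^2}\le 2$ needed after H\"older only holds for $n$ large; the finitely many small $n$ are covered by the first inequality after enlarging $C$ and $c$.
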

\begin{proof}
Let $Z\in\{\xi,g\}$. We note that the matrix $\Gamma$ is symmetric positive semi-definite, and since  $|S|\leq1$, $|\Gamma_{jk}|\leq\sigma^2T/n$ and $\mathrm{tr}\,\Gamma\leq\sigma^2T$. Since $\Gamma$ is positive semi-definite, $\Lambda_Z\leq\E e^{\langle Z,\alpha\rangle}\leq e^{\kappa\Vert\alpha\Vert^2}$ by the preliminary estimates (1)--(2). By Jensen's inequality, and since the coordinates of $Z$ are independent, centered, with unit variance, $\log\Lambda_Z\geq\E\big[\langle Z,\alpha\rangle-\frac12Z^\top\Gamma Z\big]=-\frac12\mathrm{tr}\,\Gamma\geq-\frac{\sigma^2T}2$. This gives the first bound, and in particular $\Lambda_g\geq e^{-\sigma^2T/2}$.

The second bound will be crucial to show that the ratio of densities goes to $1$. It is proven using a classical Lindeberg replacement method. Let $F(z):=e^{\langle z,\alpha\rangle-\frac12z^\top\Gamma z}$, so that $\Lambda_Z=\E F(Z)$. We replace the coordinates of $\xi$ by those of $g$ one at a time: with $Z^{(j)}:=(g_1,\dots,g_j,\xi_{j+1},\dots,\xi_n)$,
\[
\Lambda_\xi-\Lambda_g=\sum_{j=1}^n\big(\E F(Z^{(j-1)})-\E F(Z^{(j)})\big).
\]
The vectors $Z^{(j-1)}$ and $Z^{(j)}$ only differ in their $j$-th coordinate, $\xi_j$ and $g_j$, which simplifies our estimates because a single modification of a coefficient can be easily estimated as follows. For $W$ an arbitrary vector with $W_j=0$ and for $e_j$ the $j$-th vector of the canonical basis, we have $F(W+se_j)=F(W)\,\psi(s)$, with
\[
\psi(s):=e^{s\beta-\frac12s^2\gamma},\qquad\beta:=\alpha_j-(\Gamma W)_j,\qquad\gamma:=\Gamma_{jj}\in[0,\sigma^2T/n].
\]
Since $\xi_j$ and $g_j$ are independent of $W$ and have the same first two moments, the terms of order at most $2$ of the Taylor expansion of $\psi(\xi_j)$ and $\psi(g_j)$ at $0$ cancel, and
\[
\big|\E F(Z^{(j-1)})-\E F(Z^{(j)})\big|\leq\E\Big[F(W)\big(\E_{\xi_j}|\mathcal R(\xi_j)|+\E_{g_j}|\mathcal R(g_j)|\big)\Big],
\]
where $\mathcal R$ is the Taylor remainder of order $3$ of $\psi$ at $0$. As $\psi'''(u)=\psi(u)\big[(\beta-u\gamma)^3-3\gamma(\beta-u\gamma)\big]$ and $0<\psi(u)\leq e^{|u||\beta|}$, the preliminary estimate (4) and $\gamma\leq\sigma^2T/n$ give $\E|\mathcal R(Z_j)|\leq Ce^{C\beta^2}\big(|\beta|^3+|\beta|/n+n^{-2}\big)$ for $Z_j\in\{\xi_j,g_j\}$. Now $\beta=\alpha_j-Y$, where $Y:=\sum_{k\neq j}\Gamma_{jk}W_k$ is a linear combination of independent sub-Gaussian variables with coefficients at most $\sigma^2T/n$; by the preliminary estimates (2)--(3), for every $\lambda>0$, $\E e^{\lambda Y^2}\leq2$ for $n$ large enough, and $\E|Y|^p\leq C_pn^{-p/2}$. Moreover, $\E F(W)^2\leq\E e^{2\langle W,\alpha\rangle}\leq e^{4\kappa\Vert\alpha\Vert^2}$. By Cauchy--Schwarz, for $n$ large enough, the $j$-th term of the sum is therefore at most $Ce^{c\Vert\alpha\Vert^2}\big(|\alpha_j|^3+|\alpha_j|/n+n^{-3/2}\big)$. Summing over $j$, using $\frac1n\sum_j|\alpha_j|\leq\Vert\alpha\Vert/\sqrt n\leq n^{-1/2}e^{\Vert\alpha\Vert^2}$, and dividing by $\Lambda_g\geq e^{-\sigma^2T/2}$ gives the second bound for $n$ large enough. We conclude by treating the finitely many remaining values of $n$ from the first bound by possibly increasing $C$ and $c$.
\end{proof}

The above bounds involve the second and third powers of $\alpha^i_j$, which are rescaled from $A^i$ with a factor $n^{-1/2}$. To control the terms $A^i$, we now prove the following estimate. 

\begin{lemma}\label{lem:Bmoments}
For every $T>0$, $\sup_n\max_{i,j\leq n}\E_{\hat Q^n_\sigma}\big[{n^2(\alpha^i_j)^4}\big]\leq C<\infty$.
\end{lemma}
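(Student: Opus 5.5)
The plan is to work on the joint probability space of couplings and noise, since $\E_{\hat Q^n_\sigma}$ is just the expectation of a functional of the network solution averaged over both. First remove the deterministic part. Since $|S|\leq1$, we have $|M_{jk}|\leq T$, hence $\big|\frac{\bar J}{n}\sum_k M_{jk}\big|\leq|\bar J|T$. Because $n^2(\alpha^i_j)^4=\sigma^4\big(A^i_j-\frac{\bar J}{n}\sum_kM_{jk}\big)^4$, it suffices to bound $\E\big[(A^i_j)^4\big]$ uniformly in $n,i,j$, where $A^i_j$ is evaluated along the network trajectory.

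Along the network, $W_t(X^i)=W^i_t+\int_0^tG_i(s)\,ds$, with $G_i(s):=\sum_kJ_{ik}S(X^k_s)$. Hence
\[
A^i_j=\int_0^TS(X^j_t)\,dW^i_t+\int_0^TS(X^j_t)G_i(t)\,dt .
\]
\begin{itemize}
\item The stochastic integral has an adapted bounded integrand, so Burkholder--Davis--Gundy gives a fourth moment at most $CT^2$.
\item For the drift term, $\big(\int_0^T S(X^j_t)G_i(t)\,dt\big)^4\leq T^3\int_0^TG_i(t)^4\,dt$. The task therefore reduces to $\sup_{t\leq T}\E[G_i(t)^4]\leq C$.
\item The mean part of $G_i$ is at most $|\bar J|$ in absolute value. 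The difficulty is the fluctuation $\frac{\sigma}{\sqrt n}\sum_k\hat J_{ik}S(X^k_t)$: the $X^k$ depend on the row $\hat J_{i\cdot}$, so one cannot condition and use sub-Gaussianity directly.
\item Crude operator-norm or exchangeability bounds lose a factor $n$ here, so they are not enough.
\end{itemize}

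The key step, which I expect to be the main obstacle, is a leave-one-out decoupling. Let $X^{(i)}$ be the network driven by the same initial conditions and Brownian motions, but with the $i$-th column of the coupling matrix set to zero, i.e.\ $J_{ki}=0$ for $k\neq i$.

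\emph{Independence.} For $k\neq i$, the trajectories $X^{(i),k}$ do not depend on row $i$. Row $i$ is independent of the other rows, so $(X^{(i),k})_{k\neq i}$ is independent of $(\hat J_{ik})_k$.

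\emph{Decoupled sum.} Conditionally on everything else, $\frac1{\sqrt n}\sum_{k\neq i}\hat J_{ik}S(X^{(i),k}_t)$ is a linear combination of independent sub-Gaussian variables with coefficient vector of Euclidean norm at most $1$. Preliminary estimates (2)--(3) then bound its fourth moment by a constant, uniformly in $t$. The single term $k=i$ is $O(n^{-1/2}|\hat J_{ii}|)$ and is harmless.

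\emph{Remainder.} By Cauchy--Schwarz, the remainder is bounded by
\[
\Big|\sum_kJ_{ik}\big(S(X^k_t)-S(X^{(i),k}_t)\big)\Big|\leq K\Vert J_{i\cdot}\Vert\,\Vert\mathbf X_t-\mathbf X^{(i)}_t\Vert .
\]
Here $\Vert J_{i\cdot}\Vert\leq|\bar J|/\sqrt n+\frac{\sigma}{\sqrt n}\Vert\hat J_{i\cdot}\Vert$, which has all moments bounded since $\frac1n\sum_k\hat J_{ik}^2$ concentrates.

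\emph{Gronwall on the difference.} The two systems differ by a drift perturbation whose Euclidean norm is at most $\Vert J_{\cdot i}\Vert$. The drift is Lipschitz in $\mathbf x$ with constant at most $K(1+|\bar J|+\sigma\Vert\hat J\Vert_{\mathrm{op}}/\sqrt n)$. Gronwall's lemma therefore gives
\[
\sup_{t\leq T}\Vert\mathbf X_t-\mathbf X^{(i)}_t\Vert\leq T\Vert J_{\cdot i}\Vert\exp\Big(KT\big(1+|\bar J|+\sigma\Vert\hat J\Vert_{\mathrm{op}}/\sqrt n\big)\Big).
\]

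It remains to bound the fourth moment of this product of $\Vert J_{i\cdot}\Vert$, $\Vert J_{\cdot i}\Vert$ and the exponential factor. Apply Hölder to separate the three factors. The first two have all moments bounded uniformly in $n$. For the third, use the previously stated uniform bound $\sup_n\E\exp(\lambda\Vert\hat J\Vert_{\mathrm{op}}/\sqrt n)<\infty$ for every $\lambda>0$, which is where the sub-Gaussian assumption~\eqref{hyp:sub-Gaussian-tail} and the operator-norm concentration enter. Combining the three bullets gives $\sup_{t\leq T}\E[G_i(t)^4]\leq C$, hence the lemma. No finite-$n$ issues arise, since all constants are uniform in $n$.
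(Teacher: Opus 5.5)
Your proposal is correct and follows essentially the same route as the paper. Both arguments use a cavity (leave-one-out) system that is independent of row $i$, a sub-Gaussian fourth-moment bound for the decoupled sum, and a Cauchy--Schwarz plus Gronwall estimate for the remainder, closed by the uniform exponential moments of $\Vert\hat J\Vert_{\mathrm{op}}/\sqrt n$. The only differences are cosmetic: you zero column $i$ instead of deleting neuron $i$, you bound $\sup_{t\leq T}\Exp[G_i(t)^4]$ instead of $\Exp\big(\int_0^T(u^i_t)^2dt\big)^2$, and you use the column norm $\Vert J_{\cdot i}\Vert$ where the paper uses $\Vert J\Vert_{\mathrm{op}}$.
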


\begin{proof}
We work under the coupling conditions discussed above: $\hat Q^n_\sigma$ is the law of the network driven by independent Brownian motions $W^i$, with couplings and initial conditions independent of the $W^i$. From the analysis above, each quenched law $Q(J)$ is equivalent to the measure $P^{\otimes n}$, so $A^i_j$ evaluated at the network paths gives:
\[ \frac{\sqrt n}{\sigma}\alpha^i_j=\int_0^TS(X^j_t)\,dW^i_t+\int_0^TS(X^j_t)\left(\sum_kJ_{ik}S(X^k_t)-\frac{\bar J}{n}\sum_kS(X^k_t)\right)\,dt.\] 
Since $|S|\leq1$, we have $\big|\frac{\bar J}{n}\sum_kS(X^k_t)\big|\leq|\bar J|$, and the Burkholder--Davis--Gundy inequality reduces the claim to $\sup_{n,i}\E\big[(\int_0^T(u^i_t)^2dt)^2\big]<\infty$ with $u^i_t=\sum_kJ_{ik}S(X^k_t)$.

This is the sum of $n$ terms that are not independent: the $i$-th row $(J_{ik})_k$ drives $X^i$, which in turn acts on all the other neurons through the couplings $J_{ki}$, so that the $X^k$ depend on the very weights $J_{ik}$ by which they are multiplied. Conditionally on the trajectories, these weights are neither centered nor independent, which would have been a convenient property for the proof. To exploit the properties of sums of iid random variables, we compare the network with a network where neuron $i$ is removed (a sort of cavity method). In detail, we define a system $(Y^k)_{k\neq i}$ which is a network deprived of neuron $i$, $dY^k_t=\big({f(Y^k_t)}+\sum_{l\neq i}J_{kl}S(Y^l_t)\big)dt+dW^k_t$, $Y^k_0=X^k_0$. We write: 
\[u^i_t=\frac{\bar J}n\sum_kS(X^k_t)+\frac{\sigma}{\sqrt n}\hat J_{ii}S(X^i_t)+U_t+E_t\] 
with
\[
U_t:=\frac{\sigma}{\sqrt n}\sum_{k\neq i}\hat J_{ik}S(Y^k_t),\qquad E_t:=\frac{\sigma}{\sqrt n}\sum_{k\neq i}\hat J_{ik}\big(S(X^k_t)-S(Y^k_t)\big).
\]
The first two terms are bounded by $|\bar J|$ and {$\sigma|\hat J_{ii}|/\sqrt n$}. The process $Y$ is a function of $(J_{kl})_{k,l\neq i}$ and of $(W^k,X^k_0)_{k\neq i}$, hence independent of the $i$-th row; conditionally on $Y$, $U_t$ is a sum of independent centered sub-Gaussian variables with coefficients bounded by $\sigma/\sqrt n$, so $\E U_t^4\leq C$ by the preliminary estimates (2)--(3), and $\E(\int_0^TU_t^2dt)^2\leq T\int_0^T\E U_t^4dt\leq C$. For $E_t$, Cauchy--Schwarz gives $|E_t|\leq\sigma K\big(\frac1n\sum_k\hat J_{ik}^2\big)^{1/2}\Vert D_t\Vert$, with $D^k:=X^k-Y^k$, $k\neq i$. Now
\[
\dot D^k_t={f(X^k_t)-f(Y^k_t)}+\sum_{l\neq i}J_{kl}\big(S(X^l_t)-S(Y^l_t)\big)+J_{ki}S(X^i_t),
\]
and $\Vert(\sum_{l\neq i}J_{kl}(S(X^l_t)-S(Y^l_t)))_k\Vert\leq K\Vert J\Vert_{\mathrm{op}}\Vert D_t\Vert$, $\Vert(J_{ki}S(X^i_t))_k\Vert\leq\Vert J\Vert_{\mathrm{op}}$ (a column norm is at most the operator norm). Since $D_0=0$ {and $\Vert(f(X^k_t)-f(Y^k_t))_k\Vert\leq K\Vert D_t\Vert$}, Gronwall's lemma gives $\Vert D_t\Vert\leq\Vert J\Vert_{\mathrm{op}}\,t\,e^{{K(1+\Vert J\Vert_{\mathrm{op}})t}}$. Therefore
\[
\E\Big(\int_0^TE_t^2dt\Big)^2\leq C\,\E\Big[\Big(\frac1n\sum_k\hat J_{ik}^2\Big)^2\Vert J\Vert_{\mathrm{op}}^4e^{4{K(1+\Vert J\Vert_{\mathrm{op}})}T}\Big],
\]
which is bounded uniformly in $n$ by Cauchy--Schwarz, $\E(\frac1n\sum_k\hat J_{ik}^2)^4\leq\max_k\E\hat J_{ik}^8\leq C$, $\Vert J\Vert_{\mathrm{op}}\leq|\bar J|+\sigma\Vert\hat J\Vert_{\mathrm{op}}/\sqrt n$, and the bound on $\E\exp(\lambda\Vert\hat J\Vert_{\mathrm{op}}/\sqrt n)$ of the preliminary estimates.
\end{proof}

This will allow controlling terms where $\Vert\alpha^i\Vert^2$ is large, but we also need to assure an upper-bound of the probability for that quantity to be small, which requires proving an upper-bound on closed sets and not only on compacts as in~\cite{cabana-touboul:15}. This is however a simple generalization, that we include below for completeness. 
\begin{lemma}\label{lem:Bgauss}
Let $T>0$ and let $F\subset\M_1^+(\C)$ be closed with $Q_\sigma\notin F$. Then $\limsup_n\frac1n\log\tilde Q^n_\sigma(\hat\mu_n\in F)<0$.
\end{lemma}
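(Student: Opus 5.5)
The plan is to upgrade the weak large deviation principle for the averaged Gaussian law (Cabana--Touboul) to a full upper bound on closed sets. The tool is exponential tightness of $\big(\tilde Q^n_\sigma(\hat\mu_n\in\cdot)\big)_n$, combined with the fact that the good rate function $H_\sigma$ vanishes only at $Q_\sigma$.

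By \cite{cabana-touboul:15} we may take for granted that $\hat\mu_n$ under $\tilde Q^n_\sigma$ satisfies the upper bound $\limsup_n\frac1n\log\tilde Q^n_\sigma(\hat\mu_n\in K)\leq-\inf_K H_\sigma$ for every compact $K$. We also take for granted that $H_\sigma$ is a good rate function whose unique zero is $Q_\sigma$.

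\textbf{Exponential tightness.} The first step is to show that for every $L>0$ there is a compact $K_L\subset\M_1^+(\C)$ with $\limsup_n\frac1n\log\tilde Q^n_\sigma(\hat\mu_n\notin K_L)\leq-L$.
\begin{itemize}
\item I would obtain this by a change of measure. Let $\Lambda_n:=\frac{d\tilde Q^n_\sigma}{dP^{\otimes n}}$. For any event $A$, H\"older's inequality gives $\tilde Q^n_\sigma(A)\leq \big(\Exp_{P^{\otimes n}}\Lambda_n^{p}\big)^{1/p}P^{\otimes n}(A)^{1-1/p}$.
\item Under $P^{\otimes n}$, $\hat\mu_n$ is the empirical measure of i.i.d.\ paths. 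By Sanov's theorem its law is exponentially tight at arbitrary speed: take $K_L=\{\mu:\mu(\C\setminus\mathcal K_m)\leq 1/m\ \forall m\}$ with $\mathcal K_m$ compacts of $\C$ chosen so that $P(\C\setminus\mathcal K_m)\leq e^{-m^2L}$.
\item It then suffices to show $\Exp_{P^{\otimes n}}\Lambda_n^p\leq e^{Cn}$ for some fixed $p>1$.
\item Using the explicit Gaussian density $\Lambda_n=\prod_i \E_J\exp\big(\langle J_i,A^i\rangle-\frac12J_i^\top MJ_i\big)$ and integrating the Gaussian rows, each factor is an explicit Gaussian integral. Raising to the power $p$ and using Jensen in $J$ reduces the bound to exponential moments of stochastic integrals $\int S(x^j)\,dW(x^i)$ with bounded integrands. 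These are controlled by the Novikov/exponential-martingale bound, uniformly, giving $e^{Cn}$. Alternatively, the $P$-marginal estimate of \cite{cabana-touboul:15} can be quoted directly if it is stated there.
\end{itemize}

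\textbf{From compacts to closed sets.} Given $F$ closed with $Q_\sigma\notin F$, write $F\subset(F\cap K_L)\cup K_L^c$.
\begin{itemize}
\item $F\cap K_L$ is compact, so $\limsup\frac1n\log\tilde Q^n_\sigma(\hat\mu_n\in F\cap K_L)\leq-\inf_{F\cap K_L}H_\sigma$.
\item Since $H_\sigma$ is lower semicontinuous, it attains its infimum on the compact $F\cap K_L$. That infimum is strictly positive, because $H_\sigma$ vanishes only at $Q_\sigma\notin F$.
\item Choosing $L>0$ and using $\limsup\frac1n\log(a_n+b_n)=\max(\limsup\frac1n\log a_n,\limsup\frac1n\log b_n)$ gives a negative bound: $-\min\big(\inf_{F\cap K_L}H_\sigma,L\big)<0$.
\end{itemize}
The case $F\cap K_L=\emptyset$ is immediate.

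\textbf{Main obstacle.} The hard step is the exponential tightness estimate, specifically the uniform bound $\Exp_{P^{\otimes n}}\Lambda_n^p\leq e^{Cn}$. After integrating out the Gaussian couplings the density is not a product of independent factors under $P^{\otimes n}$, because $M$ couples all neurons. I would first try the following route.
\begin{itemize}
\item Use the representation $\Lambda_n=\E_J\big[\frac{dQ(J)}{dP^{\otimes n}}\big]$ and Jensen's inequality: $\Lambda_n^p\leq\E_J\big[(dQ(J)/dP^{\otimes n})^p\big]$.
\item For fixed $J$, $\Exp_{P^{\otimes n}}(dQ(J)/dP^{\otimes n})^p\leq\exp\big(\frac{p(p-1)}2\int_0^T\sum_i(J_iS(x_t))^2dt\big)$ holds in the sup sense, by the standard exponential martingale argument.
\item Since $\sum_i(J_iS(x_t))^2\leq n\Vert J\Vert_{\mathrm{op}}^2$, the quantity to control becomes $\E_J e^{cn\Vert J\Vert_{\mathrm{op}}^2}$.
\item The Gaussian concentration bound on $\Vert\tilde J\Vert_{\mathrm{op}}$ recalled in the preliminaries shows this is $\leq e^{Cn}$ once $p-1$ is taken small enough.
\end{itemize}
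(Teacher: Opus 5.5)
Your proof is correct and follows the paper's argument. Both use the compact upper bound from the weak LDP of Cabana--Touboul, then exponential tightness via a change of measure to $P^{\otimes n}$ with $\Exp_{P^{\otimes n}}\big(dQ(J)/dP^{\otimes n}\big)^p\leq e^{\frac{p(p-1)}{2}Tn\Vert J\Vert_{\mathrm{op}}^2}$ together with exponential tightness of i.i.d.\ empirical measures, and finally the standard passage from compacts to closed sets. The only difference is technical: you average over $J$ first (Jensen) and take $p$ close to $1$, so that $\E_J e^{cn\Vert J\Vert_{\mathrm{op}}^2}\leq e^{Cn}$ follows from Gaussian concentration, whereas the paper takes $p=2$ for fixed $J$ and splits on the event $\{\Vert\tilde J\Vert_{\mathrm{op}}\leq(2+s)\sqrt n\}$, which avoids any smallness condition on $p$.
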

\begin{proof}
A weak large deviation principle was proven for the system (see e.g.,~\cite[Thms.~4 and~6]{cabana-touboul:15}, applied with a single population and without delays): there exists a good rate function $H_\sigma$ such that $\limsup\frac1n\log\tilde Q^n_\sigma(\hat\mu_n\in K')\leq-\inf_{K'}H_\sigma$ for every compact $K'$, and $H_\sigma$ vanishes only at $Q_\sigma$. We can also show exponential tightness for the sequence. Indeed, for fixed couplings $J$, the density $L_J$ of the quenched law $Q(J)$ with respect to $P^{\otimes n}$ satisfies
\[
L_J^2=\exp\Big(\sum_i\int_0^T2u^i_t\,dW_t(x^i)-\frac12\sum_i\int_0^T(2u^i_t)^2dt\Big)\exp\Big(\int_0^T\Vert J\,S(\mathbf x_t)\Vert^2dt\Big),
\]
where $u^i_t=\sum_jJ_{ij}S(x^j_t)$. By Novikov, the first factor is a $P^{\otimes n}$-martingale with expectation $1$, and $\Vert J\,S(\mathbf x_t)\Vert^2\leq\Vert J\Vert_{\mathrm{op}}^2\,n$. Hence $\E_{P^{\otimes n}}L_J^2\leq e^{T\Vert J\Vert^2_{\mathrm{op}}n}$, and, for any event $E$, $Q(J)(E)\leq e^{T\Vert J\Vert_{\mathrm{op}}^2n/2}P^{\otimes n}(E)^{1/2}$ by Cauchy--Schwarz. {By exponential tightness of the empirical measures of i.i.d.\ variables in a Polish space~\cite[Lemma~6.2.6]{dembo-zeitouni:09}}, for every $M$ there is a compact $K_M\subset\M_1^+(\C)$ with $P^{\otimes n}(\hat\mu_n\notin K_M)\leq e^{-Mn}$ {for all $n$}. Since $\Vert J\Vert_{\mathrm{op}}\leq|\bar J|+\sigma\Vert\tilde J\Vert_{\mathrm{op}}/\sqrt n$, the preliminary estimates imply, with $R_s:=|\bar J|+(2+s)\sigma$,
\[
\tilde Q^n_\sigma(\hat\mu_n\notin K_M)\leq e^{(TR_s^2-M)n/2}+e^{-s^2n/2}.
\]
Given $L>0$, choosing $s$ with $s^2\geq2L$ and then $M\geq TR_s^2+2L$ gives $\limsup\frac1n\log\tilde Q^n_\sigma(\hat\mu_n\notin K_M)\leq-L$. This exponential tightness holds for every $T>0$.

Now that this is established for a compact, let us show our result for any closed set. Let $F$ be closed with $Q_\sigma\notin F$, and let $K_1$ be a compact set as constructed above associated with $L=1$, so that $\tilde Q^n_\sigma(\hat\mu_n\notin K_1)\leq e^{-n/2}$ for $n$ large. The set $F\cap K_1$ is compact and does not contain $Q_\sigma$; since $H_\sigma$ is lower semicontinuous and vanishes only at $Q_\sigma$, its infimum $b$ over $F\cap K_1$ is attained, hence $b>0$ (with $b=+\infty$ if $F\cap K_1=\emptyset$). We thus have $\tilde Q^n_\sigma(\hat\mu_n\in F\cap K_1)\leq e^{-\min(b,1)n/2}$ for $n$ large. Hence
\[
\tilde Q^n_\sigma(\hat\mu_n\in F)\leq\tilde Q^n_\sigma(\hat\mu_n\in F\cap K_1)+\tilde Q^n_\sigma(\hat\mu_n\notin K_1)\leq2e^{-\min(b,1)n/2}
\]
for $n$ large, and $\limsup_n\frac1n\log\tilde Q^n_\sigma(\hat\mu_n\in F)\leq-\frac{\min(b,1)}2<0$.\end{proof}

We can now put all of these estimates together to conclude the proof. 

\begin{proof}[Proof of Theorem~\ref{thm:Convergence4}]
By~\eqref{eq:Bproduct}, $H(\hat Q^n_\sigma|\tilde Q^n_\sigma)\leq\sum_i\E_{\hat Q^n_\sigma}|\log(\Lambda^i_\xi/\Lambda^i_g)|$, where $\Lambda^i_Z:=\Lambda_Z(\alpha^i,\Gamma)$. Fix $i$ and $L\geq1$ and {$V_i:=\sqrt n\sum_j|\alpha^i_j|^3$, so that $\sum_j|\alpha^i_j|^3=n^{-1/2}V_i$}. By the second bound of Lemma~\ref{lem:Brow}, on $\{\Vert\alpha^i\Vert^2 \leq L\}$, the relative error $\varepsilon_i:=|\Lambda^i_\xi/\Lambda^i_g-1|$ is at most $\eta_i:=Ce^{cL}n^{-1/2}(1+V_i)$. We use $|\log(1+x)|\leq2|x|$ on $\{\varepsilon_i\leq\frac12\}$, and the first bound of Lemma~\ref{lem:Brow} elsewhere; since $\mathbf 1_{\{\varepsilon_i>1/2,\,\Vert\alpha^i\Vert^2\leq L\}}\leq2\eta_i$, this gives
\[
\Big|\log\frac{\Lambda^i_\xi}{\Lambda^i_g}\Big|\leq C(1+L)\,\eta_i+C(1+\Vert\alpha^i\Vert^2)\mathbf 1_{\{\Vert\alpha^i\Vert^2>L\}}.
\]
By Lemma~\ref{lem:Bmoments} and Jensen's inequality, $\E_{\hat Q^n_\sigma}V_i\leq C$ and $\E_{\hat Q^n_\sigma}(1+\Vert\alpha^i\Vert^2)^2\leq2+{2n^2\max_j\E_{\hat Q^n_\sigma}(\alpha^i_j)^4}\leq C$. Since $(1+\Vert\alpha^i\Vert^2)\mathbf 1_{\{\Vert\alpha^i\Vert^2>L\}}\leq(1+\Vert\alpha^i\Vert^2)^2/L$,
\[
\E_{\hat Q^n_\sigma}\Big|\log\frac{\Lambda^i_\xi}{\Lambda^i_g}\Big|\leq C\Big((1+L)e^{cL}n^{-1/2}+\frac1L\Big).
\]
With $L:=\max(1,\frac{\log n}{4c})$, $e^{cL}\leq e^cn^{1/4}$, so the right-hand side is at most $C/\log n$, and $H(\hat Q^n_\sigma|\tilde Q^n_\sigma)\leq Cn/\log n$. Let $\epsilon>0$ and $A:=\{\hat\mu_n\notin B(Q_\sigma,\epsilon)\}$. By Lemma~\ref{lem:Bgauss}, there is $b_0>0$ with $\tilde Q^n_\sigma(A)\leq e^{-b_0n}$ for $n$ large. The Donsker--Varadhan variational formula, $\int f\,d\mu\leq H(\mu|\nu)+\log\int e^f\,d\nu$, applied to $f={\ell}\mathbf 1_A$ with ${\ell}=\log(1/\tilde Q^n_\sigma(A))$ {(if $\tilde Q^n_\sigma(A)=0$, then $\hat Q^n_\sigma(A)=0$ since $\hat Q^n_\sigma\ll\tilde Q^n_\sigma$)}, gives ${\ell}\,\hat Q^n_\sigma(A)\leq H(\hat Q^n_\sigma|\tilde Q^n_\sigma)+\log2$, hence $\hat Q^n_\sigma(A)\leq(Cn/\log n+\log2)/(b_0n)\leq C/\log n$ for $n$ large enough; increasing $C$ covers the finitely many remaining values of $n$.
\end{proof}

\section{Proofs of the results on heavy-tailed couplings}\label{app:ProofsHeavy}
\begin{proof}[Proof of Lemma~\ref{lem:LePageTail}]
{The almost sure convergence of $X$ and the fact that its law only depends on the law of $v_1$ are classical~\cite{lepage-woodroofe-zinn:81}, \cite[Chapter~5]{ledoux-talagrand:91}; we only check the explicit tail bound. The points $r_k:=\Gamma_k^{-1/\alpha}\Vert v_k\Vert$ form a Poisson point process on $(0,\infty)$ with mean measure $\nu((u,\infty))=m_\alpha u^{-\alpha}$. Let us now split $X=X_{>u}+X_{\leq u}$ according to whether $r_k>u$ or $r_k\leq u$. Then $\P(X_{>u}\neq0)\leq m_\alpha u^{-\alpha}$, and since the signs are independent and centered, $\Exp[\varepsilon_k\varepsilon_l]=\delta_{kl}$, so that, conditionally on $(\Gamma_k,v_k)_k$, $\Exp\Vert X_{\leq u}\Vert^2=\sum_kr_k^2\mathbf 1_{r_k\leq u}$; taking expectations (Campbell's formula), $\Exp\Vert X_{\leq u}\Vert^2=\int_0^ur^2\,\nu(dr)=\frac{\alpha}{2-\alpha}m_\alpha u^{2-\alpha}$, so that Chebyshev's inequality gives $\P(\Vert X\Vert>u)\leq m_\alpha u^{-\alpha}+\frac{\alpha}{2-\alpha}m_\alpha u^{-\alpha}=\frac{2}{2-\alpha}m_\alpha u^{-\alpha}$. Finally, with $c:=\frac{2}{2-\alpha}m_\alpha$, $\Exp[(\Vert X\Vert\wedge1)^\alpha]=\int_0^1\P(\Vert X\Vert>v^{1/\alpha})\,dv\leq\int_0^1\min(1,c/v)\,dv\leq\psi(c)$.}
\end{proof}

\begin{proof}[Proof of Theorem~\ref{thm:HeavyWellPosed}]
\emph{(i) Existence.} The existence relies on Schauder–Tychonoff's theorem that assures existence of a fixed point for \emph{continuous} function of a non-empty, convex and compact set $\mathcal K$ into itself. Here, the equation of the conjectured limit is a fixed point equation on $\M_1^+(\C)$, $Q=\Phi(Q)$. 

We start by identifying a suitable space $\mathcal K$. By Gronwall's lemma, the solution $x$ of~\eqref{eq:HeavyLimitFixedQ} satisfies, for $s,t\leq T$,
\[
\begin{gathered}\sup_{u\leq T}|x_u|\leq C\Big(1+|x_0|+\Vert Z^Q\Vert_{L^2}+\sup_{u\leq T}|W_u|\Big),\\|x_t-x_s|\leq C\Big(1+\sup_{u\leq T}|x_u|\Big)|t-s|+\Vert Z^Q\Vert_{L^2}|t-s|^{\frac12}+\lambda|W_t-W_s|,\end{gathered}
\]
with $C$ depending only on $T$, $\lambda$, $f$, $\mu$ and $S$. Since the tail of $\Vert Z^Q\Vert_{L^2}$ is bounded uniformly in $Q$ by~\eqref{eq:ZTailUniform}, the family $\{\Phi(Q):Q\in\M_1^+(\C)\}$ is tight, by the Ascoli-Arzel\`a criterion~\cite[Theorem~7.3]{billingsley:99}. Convex combinations and weak limits of a tight family remain tight, so its closed convex hull $\mathcal K$ is a convex compact subset of the space of finite signed measures on $\C$ with the weak topology, and $\Phi(\mathcal K)\subset\mathcal K$. 
%\newtext{Indeed, let $(K_\varepsilon)_{\varepsilon>0}$ be compact sets with $\Phi(Q)(K_\varepsilon^c)\leq\varepsilon$ for all $Q$ and $\varepsilon$. The set of probability measures $\nu$ such that $\nu(K_\varepsilon^c)\leq\varepsilon$ for all $\varepsilon>0$ is convex, closed for the topology of weak convergence (the $K_\varepsilon^c$ are open), hence compact by Prokhorov's theorem. Since the weak topology $\sigma(\mathcal M,C_b)$ of finite signed measures induces on $\M_1^+(\C)$ the topology of weak convergence, this set is also compact, hence closed, among signed measures, and it contains $\mathcal K$. Finally, this topology is metrizable on $\M_1^+(\C)$, so that sequential continuity of $\Phi$ suffices.}

We thus have found a suitable space $\mathcal{K}$ to apply Schauder–Tychonoff's theorem, and now show that $\Phi$ is continuous on $\mathcal{K}$. Let us consider $Q_\ell\to Q$ in $\mathcal K$; we want to show that $\Phi(Q_\ell)\to \Phi(Q)$, and to this purpose it is more convenient to work with almost sure convergence. By Skorokhod's representation theorem, we can take i.i.d.\ pairs $(y^k_\ell,y^k)_k$ with $y^k_\ell\sim Q_\ell$, $y^k\sim Q$ and $\sup_t|y^k_{\ell,t}-y^k_t|\to0$ almost surely. We define $Z^{Q_\ell}$ and $Z^Q$ with the same $(\Gamma_k,\varepsilon_k)$, and $x^\ell$ and $x$ with the same $(x_0,W)$. Lemma~\ref{lem:LePageTail} in $L^2(0,T)$, with $v_k=S(y^k_\ell)-S(y^k)$, and dominated convergence give $\Vert Z^{Q_\ell}-Z^Q\Vert_{L^2}\to0$ in probability; moreover $m_{Q_\ell}\to m_Q$ uniformly. By Gronwall's lemma, $\sup_t|x^\ell_t-x_t|\to0$ in probability, hence $\Phi(Q_\ell)\to\Phi(Q)$. This proves that $\Phi$ is continuous on $\mathcal K$, and the Schauder--Tychonoff theorem ensures the existence of a fixed point.

\emph{(ii) Uniqueness.} Let $Q,Q'$ be two fixed points, and $\Pi$ the set of couplings of $Q$ and $Q'$, a convex compact set. For $\pi\in\Pi$, let $(y^k,y'^k)_k$ be i.i.d.\ with law $\pi$, and let $x$ and $x'$ solve~\eqref{eq:HeavyLimitFixedQ} with inputs $(y^k)$ and $(y'^k)$ respectively, the same $(\Gamma_k,\varepsilon_k)$ and the same $(x_0,W)$. Since $Q$ and $Q'$ are fixed points, the law $\mathcal T\pi$ of $(x,x')$ belongs to $\Pi$. The argument of (i), applied to pairs of paths, shows that $\mathcal T$ is continuous on $\Pi$, so that it has a fixed point $\pi=\mathcal T\pi$. Let $\rho(t):=\Exp_\pi\big[(|y_t-y'_t|\wedge1)^\alpha\big]$.

As $f$ is affine, $\delta x:=x-x'$ solves a linear equation without noise:
\[
\begin{gathered}\delta x_t=D_t+C_\alpha\sum_k\varepsilon_k\Gamma_k^{-1/\alpha}b^k_t,\\ D_t:=\int_0^te^{-\kappa(t-s)}\big(m_Q(s)-m_{Q'}(s)\big)ds,\quad b^k_t:=\int_0^te^{-\kappa(t-s)}\big(S(y^k_s)-S(y'^k_s)\big)ds.\end{gathered}
\]
The integral and the series can be exchanged, since the series $\sum_k\varepsilon_k\Gamma_k^{-1/\alpha}\big(S(y^k)-S(y'^k)\big)$ converges in $L^2(0,T)$ by Lemma~\ref{lem:LePageTail}, and $z\mapsto\int_0^te^{-\kappa(t-s)}z_s\,ds$ is continuous on $L^2(0,T)$. Since $|S(u)-S(v)|\leq C(|u-v|\wedge1)$, $|m_Q(s)-m_{Q'}(s)|\leq|\mu|\,\Exp_\pi|S(y_s)-S(y'_s)|$ and $\alpha\geq1$, Jensen's inequality gives $|D_t|^\alpha+\Exp|b^1_t|^\alpha\leq C\int_0^t\rho(s)\,ds$. Lemma~\ref{lem:LePageTail} with $H=\R$, applied to the series at fixed $t$, together with $(u+v)\wedge1\leq u\wedge1+v\wedge1$ and $u\leq\psi(u)$, and the fact that $\mathcal T\pi=\pi$, yields a constant $K$ depending only on $(\alpha,T,\kappa,\mu,S)$ such that
\begin{equation}\label{eq:OsgoodStep}
\rho(t)\leq K\,\psi\Big(K\int_0^t\rho(s)\,ds\Big),\qquad t\in[0,T].
\end{equation}
Hence $F(t):=K\int_0^t\rho(s)\,ds$ satisfies $F(t)\leq K^2\int_0^t\psi(F(s))\,ds$. Since $\psi$ is continuous and nondecreasing, with $\int_0\frac{du}{\psi(u)}=+\infty$, Osgood's lemma (see, e.g.,~\cite[Chapter~3]{bahouri-chemin-danchin:11}) gives $F\equiv0$, hence $\rho\equiv0$. Under $\pi$, $y_t=y'_t$ almost surely for every $t$, hence $y=y'$ by continuity of the paths, and $Q=Q'$.
\end{proof}

\section{Numerical methods}\label{app:numerics}

All simulations concern the network
\begin{equation}\label{eq:app-network}
dX^i_t=\Big(-X^i_t+I+\sum_{j=1}^n J_{ij}\tanh(X^j_t)\Big)\,dt+\lambda\,dW^i_t,\qquad X^i_0\overset{\text{i.i.d.}}{\sim}\mathcal N(0,1),
\end{equation}
with $\lambda=1$, {$I=1$ for light-tailed and $I=0$ for heavy-tailed couplings (Section~\ref{sec:Numerics}),} discretized by the Euler--Maruyama scheme with time step $\Delta t=10^{-2}$ on $[0,T]$, $T=20$ unless stated otherwise. States are recorded every $10$ steps. The code is written in Python (NumPy, SciPy). We describe here the algorithms and the choices that matter for the comparisons.

\subsection{Coupling distributions}
\emph{Light-tailed couplings (weakly and strongly heterogeneous cases).} We set $J_{ij}=\bar J/n+\sigma g_{ij}/\sqrt{a_n}$, where the $g_{ij}$ are i.i.d., centered, with unit variance with laws chosen among: Gaussian; Rademacher; uniform (that is, $\sqrt{12}(U-\tfrac12)$ where $U$ is uniform in $[0,1]$); centered exponential ($E-1$ where $E$ is an exponential random variable with rate 1); or a centered and normalized log-normal ($(e^{s\xi}-e^{s^2/2})/\sqrt{(e^{s^2}-1)e^{s^2}}$ with $\xi\sim\mathcal N(0,1)$ and $s=1$). The strongly heterogeneous regime corresponds to $a_n=n$; in the weakly heterogeneous regime we use $a_n\in\{n\log n,n^{3/2},n^2\}$.

\emph{Heavy-tailed couplings.} We set $J_{ij}=g_{ij}/n^{1/\alpha}$ with $g_{ij}$ symmetric and normalized so that
\begin{equation}\label{eq:app-tail}
\P(|g_{ij}|>x)\sim x^{-\alpha},\qquad x\to\infty,
\end{equation}
i.e. with tail constant equal to $1$. This normalization is essential: with it, $n^{-1/\alpha}\sum_j g_{ij}V_j$ converges, for i.i.d. bounded $V_j$ independent of the $g_{ij}$, to the LePage series $\sum_k\varepsilon_k\Gamma_k^{-1/\alpha}V_k$ with constant $1$, so that all laws below share the same conjectured limit. We use
\begin{itemize}
\item symmetric Pareto: $g=\varepsilon\,U^{-1/\alpha}$, for which~\eqref{eq:app-tail} holds exactly for $x\ge 1$;
\item symmetric $\alpha$-stable with characteristic function $e^{-|\theta|^\alpha}$, divided by $C_\alpha^{1/\alpha}$ with $C_\alpha=\frac2\pi\Gamma(\alpha)\sin\frac{\pi\alpha}2$;
\item Student $t_\nu$ with $\nu=\alpha$, divided by $c^{1/\alpha}$ with $c=\frac{2}{\nu}\,\frac{\Gamma(\frac{\nu+1}2)}{\sqrt{\nu\pi}\,\Gamma(\frac\nu2)}\,\nu^{\frac{\nu+1}2}$.
\end{itemize}
For finite variance but heavy tails ($\alpha\in(2,4)$), symmetric Pareto weights are normalized to unit variance and used with $J_{ij}=\bar J/n+\sigma g_{ij}/\sqrt n$.

\subsection{Simulations of limit equations}
{
The three limit equations are self-consistent: an input $u$ received by a neuron is built from the law of the solution itself (through $\Exp[S(x_t)]$, through the covariance $\Exp[S(x_t)S(x_s)]$, or through the $\alpha$-stable process $Z^x$). We approximate the law of the solution by the empirical law of a population of $M$ particles, and compute the fixed point by Picard iterations. Each iteration consists of two steps:
\begin{enumerate}
\item \emph{Simulation.} Given the current statistics of the input, each particle $p\le M$ receives an input $u^p$ sampled from these statistics, and is simulated on the grid $t_l=l\Delta t$ by the Euler--Maruyama scheme
\[
x^p_{t_{l+1}}=x^p_{t_l}+\Delta t\,\big(-x^p_{t_l}+I+u^p(t_l)\big)+\lambda\sqrt{\Delta t}\,\xi^p_l,
\]
with initial conditions $x^p_0$ i.i.d.\ $\mathcal N(0,1)$ and i.i.d.\ standard Gaussian increments $\xi^p_l$.
\item \emph{Update.} The statistics of the input are recomputed from the empirical law of the $M$ simulated trajectories.
\end{enumerate}
The initial conditions and the increments $\xi^p_l$ are drawn once and kept fixed across iterations. For light tails, we also use the damped (Krasnosel'skii--Mann~\cite{mann:53}) version of the iteration, in which the new statistics are the average of the current statistics and of the updated ones. 

\emph{McKean--Vlasov limit.} The input is deterministic, $u=m$ with $m(t)=\bar J\,\Exp[\tanh(x_t)]$, and the iterated quantity is the function $m$ on the time grid. Starting from $m\equiv0$, each iteration simulates the particles with input $m$ and computes $\tilde m(t_l)=\bar J\,\frac1M\sum_{p}\tanh(x^p_{t_l})$; the new input is $\frac12(m+\tilde m)$. We use $M=40\,000$ particles, and stop when $\max_l|\tilde m(t_l)-m(t_l)|<10^{-4}$, or after $15$ iterations.

\emph{Gaussian self-consistent limit.} The input is $u=m+G$, where $G$ is a centered Gaussian process with covariance $K(t,s)=\sigma^2\Exp[\tanh(x_t)\tanh(x_s)]$, and the iterated quantity is the pair $(m,K)$ on the time grid. To sample $G$, we compute the Cholesky factor $L$ of the matrix $\big(K(t_l,t_{l'})\big)_{l,l'}+10^{-8}\,\mathrm{Id}$, and set $G^p=Lz^p$, where the $z^p$ are standard Gaussian vectors, drawn once and kept fixed across iterations. Starting from $m\equiv0$ and $K\equiv\sigma^2/2$, each iteration simulates the particles with inputs $m+G^p$, computes the empirical mean $\tilde m$ as above and the empirical covariance $\tilde K(t_l,t_{l'})=\sigma^2\frac1M\sum_p\tanh(x^p_{t_l})\tanh(x^p_{t_{l'}})$, and sets the new statistics to $\frac12\big((m,K)+(\tilde m,\tilde K)\big)$. We use $M=16\,000$ particles, and stop when $\max_l|\tilde m(t_l)-m(t_l)|$ and $\max_{l,l'}|\tilde K(t_l,t_{l'})-K(t_l,t_{l'})|$ are below $2\cdot10^{-3}$, or after $60$ iterations; in practice, the iteration stops after about $45$ steps.

\emph{Conjectured $\alpha$-stable limit.} The input is $u=\mu\,\Exp[\tanh(x_t)]+Z_t$, where $Z$ is the LePage series $Z_t=\sum_{k\geq1}\varepsilon_k\Gamma_k^{-1/\alpha}\tanh(y^k_t)$ of the conjecture, with $(\Gamma_k)$ the arrival times of a unit-rate Poisson process, $(\varepsilon_k)$ independent uniform signs, and $(y^k)$ independent copies of the solution. Here, the iterated object is the population $(y^q)_{q\le M}$ itself, which serves as a sample of the law of the solution. For each particle $p$ and at each iteration, we draw independently:
\begin{itemize}
\item the first $k_0$ arrival times $\Gamma^p_1<\dots<\Gamma^p_{k_0}$ of a unit-rate Poisson process, and signs $\varepsilon^p_k$;
\item indices $q^p_k$ uniform in $\{1,\dots,M\}$, so that $y^{q^p_k}$ plays the role of an independent copy of the solution;
\item a centered Gaussian process $R^p$ replacing the remainder of the series, $\sum_{k>k_0}\varepsilon_k\Gamma_k^{-1/\alpha}\tanh(y^k_t)$. Since the signs are independent and centered, this remainder has covariance $r_{k_0}\,\Exp[\tanh(y_t)\tanh(y_s)]$, with
\[
r_{k_0}=\sum_{k>k_0}\Exp\big[\Gamma_k^{-2/\alpha}\big]=\sum_{k>k_0}\frac{\Gamma(k-2/\alpha)}{\Gamma(k)};
\]
we give $R^p$ the covariance $r_{k_0}\frac1M\sum_q\tanh(y^q_t)\tanh(y^q_s)$ (plus $10^{-8}$ on the diagonal), and compute $r_{k_0}$ exactly up to $k=10^5$, the rest of the sum being replaced by $\int_{10^5}^\infty x^{-2/\alpha}\,dx$.
\end{itemize}
Particle $p$ is then simulated with input
\[
u^p_t=\mu\,\frac1M\sum_{q}\tanh(y^q_t)+\sum_{k=1}^{k_0}\varepsilon^p_k(\Gamma^p_k)^{-1/\alpha}\tanh\big(y^{q^p_k}_t\big)+R^p_t,
\]
and the population $(y^q)$ is replaced by the $M$ new trajectories, without damping. The population is initialized by simulating the particles with input $u\equiv0$. Since the series is resampled at each iteration, the iteration is random, and the fixed point is only reached in law, up to Monte Carlo fluctuations. We monitor convergence through the Kolmogorov--Smirnov distance between successive populations (Figure~\ref{fig:numerics}a), and use $k_0=300$ ($k_0=200$ for Figure~\ref{fig:universality}), $M=8\,000$ particles and $12$ iterations.
}

\begin{figure}
	\includegraphics[width=\textwidth]{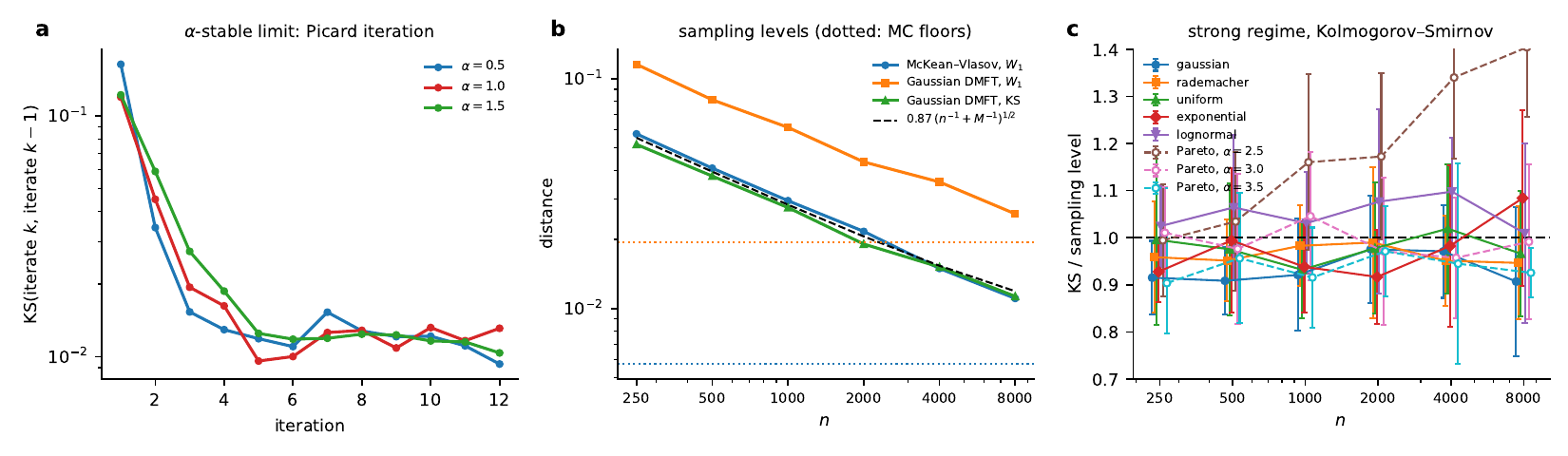}
	\caption{{Numerical controls. (a)~Kolmogorov--Smirnov distance between successive iterates of the fixed-point iteration for the $\alpha$-stable limit. (b)~Sampling levels: distance between $n$ trajectories drawn from an independent solution of the limit and the reference solution, for the McKean--Vlasov and Gaussian limits; dashed: $0.87\,(n^{-1}+M^{-1})^{1/2}$ with $M=16\,000$; dotted: Monte Carlo floors. (c)~Strong regime: Kolmogorov--Smirnov distance between the network and the limit, divided by the sampling level.}}
	\label{fig:numerics}
\end{figure}

\subsection{Comparison with the limit: observables and distances}\label{app:distances}
For a network of size $n$, we compare, at a given time $t$, the empirical law of the $n$ values $(x^1_t,\dots,x^n_t)$ with the empirical law of the $M$ samples of the limit. For light tails we use the Wasserstein distance $W_1$; for heavy tails, whose first moment may be infinite, we use the Kolmogorov--Smirnov distance~\cite{kolmogorov:33,smirnov:39} (the largest difference between the two distribution functions). Distances are averaged over $8$ equally spaced times in $[T/10,T]$, and each point in the figures is the mean and standard deviation over {$10$} independent realizations of the couplings. All laws of the couplings use the same initial conditions and Brownian motions, so that differences between them are due to the couplings only. 

A reference distance was estimated by drawing $n$ trajectories from a second, independent numerical solution of the limit, and measuring their distance to the reference solution with exactly the same procedure as for the network. For the Kolmogorov--Smirnov distance it is also given by an explicit formula $\sqrt{\pi/2}\ln(2)\,(n^{-1}+M^{-1})^{1/2}$. This formula assumes that the $n$ neurons are independent, which only holds asymptotically; its agreement with the numerical estimate (Figure~\ref{fig:numerics}b) is therefore a consistency check. These two estimates give us a natural comparison: if the distance between the network and the limit is on the same order as the sampling level, the network cannot be distinguished from $n$ independent copies of the limit at this size.

For a network trajectory $(X^i_{t_k})$ and a sample $(x^p_{t_k})$ of the limit, we compute:
\begin{itemize}
\item the Wasserstein-$1$ distance $W_1$ between the empirical laws of $(X^i_{t})_{i\le n}$ and $(x^p_t)_{p\le M}$ (light tails);
\item the Kolmogorov--Smirnov distance between the same marginals (heavy tails, for which $W_1$ is dominated by extreme values);
\item the stationary correlation $C(\tau)=\langle\tanh(x_t)\tanh(x_{t+\tau})\rangle$, averaged over neurons and over $t\in[T/2,T]$, and the $L^2$ distance between network and limit;
\item the quenched fluctuations, i.e. the standard deviation of $\frac1n\sum_i\tanh(X^i_T)$ over realizations of $J$;
\item for heavy tails, the per-neuron quantities $\sum_jJ_{ij}^2$ and the integrated correlation time of each $X^i$.
\end{itemize}
Each limit is computed twice with independent Monte Carlo samples; the distance between the two copies defines a \emph{Monte Carlo floor}, below which differences are not significant. Network sizes are $n\in\{250,500,\dots,8000\}$.

\begin{figure}
	\includegraphics[width=\textwidth]{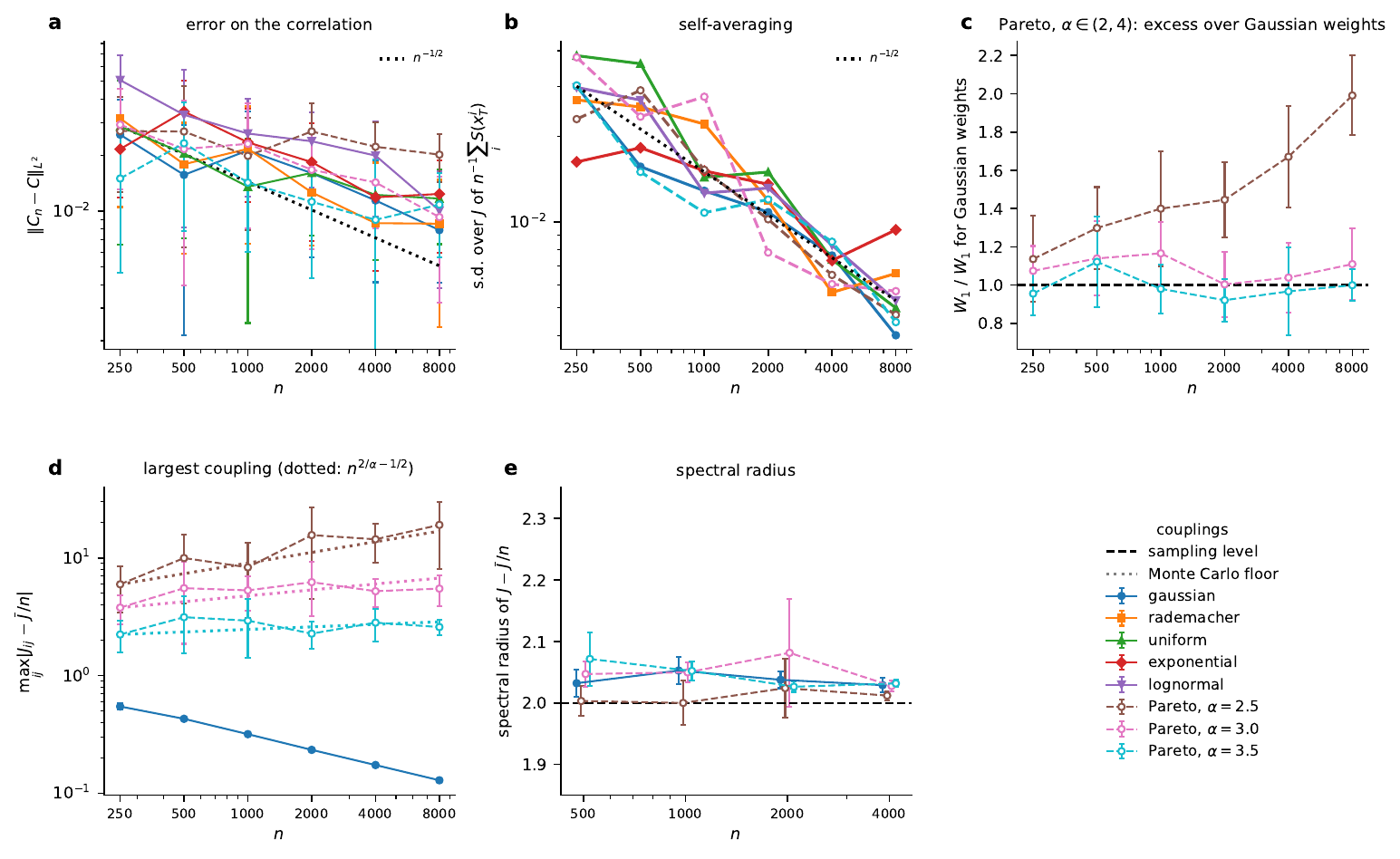}
	\caption{{Strongly heterogeneous regime, further analyses (panels a--d: same simulations as Figure~\ref{fig:lighttails}c--e; panel e: separate simulations with $I=0$).} (a)~$L^2$ error on the correlation function, $\tau\in[0,5]$; (b)~standard deviation, over realizations of the couplings, of the population average $n^{-1}\sum_iS(x^i_T)$; (c)~distance for Pareto couplings divided by the distance for Gaussian couplings; (d)~largest coupling, dotted: $n^{2/\alpha-1/2}$; (e)~spectral radius of $J-\bar J/n$ ($5$ realizations, $n\leq4\,000$), dashed: $\sigma$.}
	\label{fig:strongsupp}
\end{figure}


\begin{thebibliography}{999}

\bibitem{aldous-steele:04}
D.~Aldous and J.~M.~Steele. \newblock The objective method: probabilistic combinatorial optimization and local weak convergence. \newblock In H.~Kesten (ed.), {\em Probability on Discrete Structures}, Encyclopaedia Math. Sci. 110, pages 1--72. Springer, 2004.

\bibitem{aljadeff-stern-sharpee:15}
J.~Aljadeff, M.~Stern and T.~Sharpee. \newblock Transition to chaos in random networks with cell-type-specific connectivity. \newblock {\em Phys. Rev. Lett.}, 114:088101, 2015.

\bibitem{allesina-tang:12}
{S.~Allesina and S.~Tang. \newblock Stability criteria for complex ecosystems. \newblock {\em Nature}, 483(7388):205--208, 2012.}

\bibitem{amari:71}
S.-I.~Amari. \newblock Characteristics of randomly connected threshold-element networks and network systems. \newblock {\em Proceedings of the IEEE}, 59(1):35--47, 1971.

\bibitem{arnese-lacker:26}
M.~Arnese and D.~Lacker. \newblock Sharp propagation of chaos for mean field Langevin dynamics, control, and games. \newblock {\em arXiv:2603.10988}, 2026.

\bibitem{auffinger-benarous-peche:09}
A.~Auffinger, G.~Ben~Arous and S.~P\'ech\'e. \newblock Poisson convergence for the largest eigenvalues of heavy tailed random matrices. \newblock {\em Ann. Inst. H. Poincar\'e Probab. Statist.}, 45(3):589--610, 2009.

\bibitem{ayi-pouradierduteil:21}
{N.~Ayi and N.~Pouradier~Duteil. \newblock Mean-field and graph limits for collective dynamics models with time-varying weights. \newblock {\em J. Differential Equations}, 299:65--110, 2021.}

\bibitem{ayi-pouradierduteil:24}
{N.~Ayi and N.~Pouradier~Duteil. \newblock Large-population limits of non-exchangeable particle systems. \newblock In {\em Active Particles, Volume~4}, pages 79--133. Birkh\"auser, 2024.}

\bibitem{bahouri-chemin-danchin:11}
H.~Bahouri, J.-Y.~Chemin and R.~Danchin. \newblock {\em Fourier Analysis and Nonlinear Partial Differential Equations}. \newblock Grundlehren Math. Wiss. 343, Springer, Heidelberg, 2011.

\bibitem{bai:97}
Z.~D.~Bai. \newblock Circular law. \newblock {\em Ann. Probab.}, 25(1):494--529, 1997.

\bibitem{bandeira-vanhandel:16}
A.~S.~Bandeira and R.~van~Handel. \newblock Sharp nonasymptotic bounds on the norm of random matrices with independent entries. \newblock {\em Ann. Probab.}, 44(4):2479--2506, 2016.

\bibitem{barral-reyes:16}
J.~Barral and A.~D.~Reyes. \newblock Synaptic scaling rule preserves excitatory--inhibitory balance and salient neuronal network dynamics. \newblock {\em Nature Neurosci.}, 19(12):1690--1696, 2016.

\bibitem{bayraktar-chakraborty-wu:23}
E.~Bayraktar, S.~Chakraborty and R.~Wu. \newblock Graphon mean field systems. \newblock {\em Ann. Appl. Probab.}, 33(5):3587--3619, 2023.

\bibitem{ben-arous-dembo-guionnet:01}
G.~Ben~Arous, A.~Dembo and A.~Guionnet. \newblock Aging of spherical spin glasses. \newblock {\em Probab. Theory Related Fields}, 120(1):1--67, 2001.

\bibitem{ben-arous-dembo-guionnet:06}
G.~Ben~Arous, A.~Dembo and A.~Guionnet. \newblock Cugliandolo--Kurchan equations for dynamics of spin-glasses. \newblock {\em Probab. Theory Related Fields}, 136(4):619--660, 2006.

\bibitem{ben-arous-guionnet:95}
G.~Ben~Arous and A.~Guionnet. \newblock Large deviations for Langevin spin glass dynamics. \newblock {\em Probab. Theory Related Fields}, 102(4):455--509, 1995.

\bibitem{ben-arous-guionnet:97}
G.~Ben~Arous and A.~Guionnet. \newblock Symmetric Langevin spin glass dynamics. \newblock {\em Ann. Probab.}, 25(3):1367--1422, 1997.

\bibitem{ben-arous-guionnet:08}
G.~Ben~Arous and A.~Guionnet. \newblock The spectrum of heavy tailed random matrices. \newblock {\em Comm. Math. Phys.}, 278:715--751, 2008.

\bibitem{ben-arous-zeitouni:99}
G.~Ben~Arous and O.~Zeitouni. \newblock Increasing propagation of chaos for mean field models. \newblock {\em Ann. Inst. H. Poincar\'e Probab. Statist.}, 35(1):85--102, 1999.

\bibitem{bhamidi-budhiraja-wu:19}
S.~Bhamidi, A.~Budhiraja and R.~Wu. \newblock Weakly interacting particle systems on inhomogeneous random graphs. \newblock {\em Stochastic Process. Appl.}, 129(6):2174--2206, 2019.

\bibitem{billingsley:99}
P.~Billingsley. \newblock {\em Convergence of Probability Measures}. \newblock Wiley, New York, 2nd edition, 1999.

\bibitem{blaustein:22}
{A.~Blaustein. \newblock Large coupling in a FitzHugh--Nagumo neural network: quantitative and strong convergence results. \newblock {\em J. Differential Equations}, 374:218--266, 2023.}

\bibitem{blaustein-bouin:22}
{A.~Blaustein and E.~Bouin. \newblock Concentration profiles in FitzHugh--Nagumo neural networks: a Hopf--Cole approach. \newblock {\em Discrete Contin. Dyn. Syst. Ser. B}, 29(4):2018--2042, 2024.}

\bibitem{blaustein-filbet:22}
{A.~Blaustein and F.~Filbet. \newblock Concentration phenomena in FitzHugh--Nagumo equations: a mesoscopic approach. \newblock {\em SIAM J. Math. Anal.}, 55(1):367--404, 2023.}

\bibitem{bordenave-caputo:14}
C.~Bordenave and P.~Caputo. \newblock A large deviation principle for {W}igner matrices without {G}aussian tails. \newblock {\em Ann. Probab.}, 42(6):2454--2496, 2014.

\bibitem{bordenave-caputo-chafai:11}
C.~Bordenave, P.~Caputo and D.~Chafa\"{\i}. \newblock Spectrum of non-Hermitian heavy tailed random matrices. \newblock {\em Comm. Math. Phys.}, 307(2):513--560, 2011.

\bibitem{bordenave-caputo-chafai-tikhomirov:18}
C.~Bordenave, P.~Caputo, D.~Chafa\"{\i} and K.~Tikhomirov. \newblock On the spectral radius of a random matrix: an upper bound without fourth moment. \newblock {\em Ann. Probab.}, 46(4):2268--2286, 2018.

\bibitem{bordenave-guionnet:13}
{C.~Bordenave and A.~Guionnet. \newblock Localization and delocalization of eigenvectors for heavy-tailed random matrices. \newblock {\em Probab. Theory Related Fields}, 157:885--953, 2013.}

\bibitem{brunel:00}
N.~Brunel. \newblock Dynamics of sparsely connected networks of excitatory and inhibitory spiking neurons. \newblock {\em J. Comput. Neurosci.}, 8(3):183--208, 2000.

\bibitem{budhiraja-dupuis-fischer:12}
A.~Budhiraja, P.~Dupuis and M.~Fischer. \newblock Large deviation properties of weakly interacting processes via weak convergence methods. \newblock {\em Ann. Probab.}, 40(1):74--102, 2012.

\bibitem{bunin:17}
G.~Bunin. \newblock Ecological communities with Lotka--Volterra dynamics. \newblock {\em Phys. Rev. E}, 95:042414, 2017.

\bibitem{buzsaki-mizuseki:14}
{G.~Buzs\'aki and K.~Mizuseki. \newblock The log-dynamic brain: how skewed distributions affect network operations. \newblock {\em Nature Rev. Neurosci.}, 15:264--278, 2014.}

\bibitem{cabana2016large}
T.~Cabana. \newblock {\em Large deviations for the dynamics of heterogeneous neural networks}. \newblock PhD dissertation, Universit\'e Pierre et Marie Curie--Paris~VI, 2016.

\bibitem{cabana-touboul:13}
T.~Cabana and J.~Touboul. \newblock Large deviations, dynamics and phase transitions in large stochastic and disordered neural networks. \newblock {\em J. Stat. Phys.}, 153(2):211--269, 2013.

\bibitem{cabana-touboul:15}
T.~Cabana and J.~Touboul. \newblock Large deviations for randomly connected neural networks: I. Spatially extended systems. \newblock {\em Adv. Appl. Probab.}, 50(3):944--982, 2018.

\bibitem{cabana-touboul:16}
T.~Cabana and J.~Touboul. \newblock Large deviations for randomly connected neural networks: II. State-dependent interactions. \newblock {\em Adv. Appl. Probab.}, 50(3):983--1004, 2018.

\bibitem{carmona-hu:06}
P.~Carmona and Y.~Hu. \newblock Universality in Sherrington--Kirkpatrick's spin glass model. \newblock {\em Ann. Inst. H. Poincar\'e Probab. Statist.}, 42(2):215--222, 2006.

\bibitem{castellano-fortunato-loreto:09}
C.~Castellano, S.~Fortunato and V.~Loreto. \newblock Statistical physics of social dynamics. \newblock {\em Rev. Mod. Phys.}, 81(2):591--646, 2009.

\bibitem{celentano-cheng-montanari:21}
M.~Celentano, C.~Cheng and A.~Montanari. \newblock The high-dimensional asymptotics of first order methods with random data. \newblock {\em arXiv:2112.07572}, 2021.

\bibitem{cessac:10}
B.~Cessac. \newblock A view of neural networks as dynamical systems. \newblock {\em Internat. J. Bifur. Chaos}, 20(6):1585--1629, 2010.

\bibitem{chaintron-diez:22}
L.-P.~Chaintron and A.~Diez. \newblock Propagation of chaos: a review of models, methods and applications. I. Models and methods. \newblock {\em Kinet. Relat. Models}, 15(6):895--1015, 2022.

\bibitem{chatterjee:06}
S.~Chatterjee. \newblock A generalization of the Lindeberg principle. \newblock {\em Ann. Probab.}, 34(6):2061--2076, 2006.

\bibitem{chen-kim-sen:23}
{W.-K.~Chen, H.~Kim and A.~Sen. \newblock Some rigorous results on the L\'evy spin glass model. \newblock {\em arXiv:2303.06084}, 2023.}

\bibitem{bouchaud-cizeau:94}
{P.~Cizeau and J.-P.~Bouchaud. \newblock Theory of L\'evy matrices. \newblock {\em Phys. Rev. E}, 50(3):1810--1822, 1994.}

\bibitem{coppini-dietert-giacomin:20}
F.~Coppini, H.~Dietert and G.~Giacomin. \newblock A law of large numbers and large deviations for interacting diffusions on Erd\H{o}s--R\'enyi graphs. \newblock {\em Stoch. Dyn.}, 20(2):2050010, 2020.

\bibitem{crevat-faye-filbet:19}
J.~Crevat, G.~Faye and F.~Filbet. \newblock Rigorous derivation of the nonlocal reaction--diffusion FitzHugh--Nagumo system. \newblock {\em SIAM J. Math. Anal.}, 51(1):346--373, 2019.

\bibitem{crisanti-sompolinsky:87}
A.~Crisanti and H.~Sompolinsky. \newblock Dynamics of spin systems with randomly asymmetric bonds: Langevin dynamics and a spherical model. \newblock {\em Phys. Rev. A}, 36:4922--4939, 1987.

\bibitem{crisanti-sompolinsky:18}
A.~Crisanti and H.~Sompolinsky. \newblock Path integral approach to random neural networks. \newblock {\em Phys. Rev. E}, 98(6):062120, 2018.

\bibitem{cugliandolo-kurchan:93}
L.~F.~Cugliandolo and J.~Kurchan. \newblock Analytical solution of the off-equilibrium dynamics of a long-range spin-glass model. \newblock {\em Phys. Rev. Lett.}, 71:173--176, 1993.

\bibitem{daipra-denhollander:96}
P.~Dai~Pra and F.~den~Hollander. \newblock McKean--Vlasov limit for interacting random processes in random media. \newblock {\em J. Stat. Phys.}, 84:735--772, 1996.

\bibitem{davidson-szarek:01}
K.~R.~Davidson and S.~J.~Szarek. \newblock Local operator theory, random matrices and Banach spaces. \newblock In {\em Handbook of the Geometry of Banach Spaces}, Vol.~I, pages 317--366. North-Holland, Amsterdam, 2001.

\bibitem{dawson-gartner:87}
D.~A.~Dawson and J.~G\"artner. \newblock Large deviations from the McKean--Vlasov limit for weakly interacting diffusions. \newblock {\em Stochastics}, 20(4):247--308, 1987.

\bibitem{delattre-fournier-hoffmann:16}
S.~Delattre, N.~Fournier and M.~Hoffmann. \newblock Hawkes processes on large networks. \newblock {\em Ann. Appl. Probab.}, 26(1):216--261, 2016.

\bibitem{delattre-giacomin-lucon:16}
S.~Delattre, G.~Giacomin and E.~Lu\c{c}on. \newblock A note on dynamical models on random graphs and Fokker--Planck equations. \newblock {\em J. Stat. Phys.}, 165:785--798, 2016.

\bibitem{dembo-gheissari:21}
A.~Dembo and R.~Gheissari. \newblock Diffusions interacting through a random matrix: universality via stochastic Taylor expansion. \newblock {\em Probab. Theory Related Fields}, 180(3--4):1057--1097, 2021.

\bibitem{dembo-lubetzky-zeitouni:21}
A.~Dembo, E.~Lubetzky and O.~Zeitouni. \newblock Universality for Langevin-like spin glass dynamics. \newblock {\em Ann. Appl. Probab.}, 31(6):2864--2880, 2021.

\bibitem{dembo-zeitouni:09}
A.~Dembo and O.~Zeitouni. \newblock {\em Large Deviations Techniques and Applications}. \newblock Stochastic Modelling and Applied Probability 38, Springer, 2nd edition (corrected printing), 2010.

\bibitem{dudeja-lu-sen:23}
{R.~Dudeja, Y.~M.~Lu and S.~Sen. \newblock Universality of approximate message passing with semirandom matrices. \newblock {\em Ann. Probab.}, 51(5), 2023. doi:10.1214/23-AOP1628.}

\bibitem{faugeras-tanre:25}
O.~Faugeras and E.~Tanr\'e. \newblock Universality of the mean-field equations of networks of Hopfield-like neurons. \newblock {\em arXiv:2408.14290}, 2025.

\bibitem{ganguly-ramanan:24}
A.~Ganguly and K.~Ramanan. \newblock Hydrodynamic limits of non-Markovian interacting particle systems on sparse graphs. \newblock {\em Electron. J. Probab.}, 29:1--63, 2024.

\bibitem{geman:82}
S.~Geman. \newblock Almost sure stable oscillations in a large system of randomly coupled equations. \newblock {\em SIAM J. Appl. Math.}, 42(4):695--703, 1982.

\bibitem{geman:1982b}
S.~Geman and C.-R.~Hwang. \newblock A chaos hypothesis for some large systems of random equations. \newblock {\em Z. Wahrscheinlichkeitstheorie verw. Gebiete}, 60(3):291--314, 1982.

\bibitem{gerbelot-etal:24}
{C.~Gerbelot, E.~Troiani, F.~Mignacco, F.~Krzakala and L.~Zdeborov\'a. \newblock Rigorous dynamical mean-field theory for stochastic gradient descent methods. \newblock {\em SIAM J. Math. Data Sci.}, 2024. doi:10.1137/23M1594388.}

\bibitem{ginibre:65}
J.~Ginibre. \newblock Statistical ensembles of complex, quaternion, and real matrices. \newblock {\em J. Math. Phys.}, 6:440--449, 1965.

\bibitem{gnedenko-kolmogorov:54}
B.~V.~Gnedenko and A.~N.~Kolmogorov. \newblock {\em Limit Distributions for Sums of Independent Random Variables}. \newblock Addison-Wesley, 1954.

\bibitem{grunwald:96}
M.~Grunwald. \newblock Sanov results for Glauber spin-glass dynamics. \newblock {\em Probab. Theory Related Fields}, 106(2):187--232, 1996.

\bibitem{guionnet:97}
A.~Guionnet. \newblock Averaged and quenched propagation of chaos for spin glass dynamics. \newblock {\em Probab. Theory Related Fields}, 109:183--215, 1997.

\bibitem{hanin:23}
B.~Hanin. \newblock Random neural networks in the infinite width limit as Gaussian processes. \newblock {\em Ann. Appl. Probab.}, 33(6A):4798--4819, 2023.

\bibitem{helias-dahmen:20}
M.~Helias and D.~Dahmen. \newblock {\em Statistical Field Theory for Neural Networks}. \newblock Lecture Notes in Physics 970, Springer, 2020.

\bibitem{hermann-touboul:12}
G.~Hermann and J.~Touboul. \newblock Heterogeneous connections induce oscillations in large-scale networks. \newblock {\em Phys. Rev. Lett.}, 109:018702, 2012.

\bibitem{jabin-poyato-soler:25}
P.-E.~Jabin, D.~Poyato and J.~Soler. \newblock Mean-field limit of non-exchangeable systems. \newblock {\em Comm. Pure Appl. Math.}, 78(4):651--741, 2025.

\bibitem{jabin-wang:17}
{P.-E.~Jabin and Z.~Wang. \newblock Mean field limit for stochastic particle systems. \newblock In {\em Active Particles, Vol.~1}, pages 379--402. Birkh\"auser, 2017.}

\bibitem{jung-lee-lee-yang:23}
{P.~Jung, H.~Lee, J.~Lee and H.~Yang. \newblock $\alpha$-Stable convergence of heavy-/light-tailed infinitely wide neural networks. \newblock {\em Adv. Appl. Probab.}, 55(4):1415--1441, 2023.}

\bibitem{kac:56}
M.~Kac. \newblock Foundations of kinetic theory. \newblock In {\em Proc. Third Berkeley Symp. Math. Statist. Probab.}, vol.~3, pages 171--197. University of California Press, 1956.

\bibitem{kadmon-sompolinsky:15}
J.~Kadmon and H.~Sompolinsky. \newblock Transition to chaos in random neuronal networks. \newblock {\em Phys. Rev. X}, 5(4):041030, 2015.

\bibitem{kolmogorov:33}
A.~N.~Kolmogorov. \newblock Sulla determinazione empirica di una legge di distribuzione. \newblock {\em Giornale dell'Istituto Italiano degli Attuari}, 4:83--91, 1933.

\bibitem{kuehn-xu:22}
C.~Kuehn and C.~Xu. \newblock Vlasov equations on digraph measures. \newblock {\em J. Differential Equations}, 339:261--349, 2022.

\bibitem{kuramoto-arakai}
Y.~Kuramoto. \newblock Self-entrainment of a population of coupled non-linear oscillators. \newblock In H.~Araki (ed.), {\em International Symposium on Mathematical Problems in Theoretical Physics}, Lecture Notes in Physics 39, page 420. Springer, 1975.

\bibitem{kusmierz-ogawa-toyoizumi:20}
{\leavevmode\setbox0\hbox{L}\hbox to\wd0{\hss\char32L}}.~Ku\'smierz, S.~Ogawa and T.~Toyoizumi. \newblock Edge of chaos and avalanches in neural networks with heavy-tailed synaptic weight distribution. \newblock {\em Phys. Rev. Lett.}, 125:028101, 2020.

\bibitem{lacker:23}
D.~Lacker. \newblock Hierarchies, entropy, and quantitative propagation of chaos for mean field diffusions. \newblock {\em Probab. Math. Phys.}, 4(2):377--432, 2023.

\bibitem{lacker-leflem:23}
D.~Lacker and L.~Le~Flem. \newblock Sharp uniform-in-time propagation of chaos. \newblock {\em Probab. Theory Related Fields}, 187(1--2):443--480, 2023.

\bibitem{lacker-ramanan-wu:21}
D.~Lacker, K.~Ramanan and R.~Wu. \newblock Locally interacting diffusions as Markov random fields on path space. \newblock {\em Stochastic Process. Appl.}, 140:81--114, 2021.

\bibitem{lacker-ramanan-wu:23}
D.~Lacker, K.~Ramanan and R.~Wu. \newblock Local weak convergence for sparse networks of interacting processes. \newblock {\em Ann. Appl. Probab.}, 33(2):643--688, 2023.

\bibitem{lacker-ramanan-wu:23b}
D.~Lacker, K.~Ramanan and R.~Wu. \newblock Marginal dynamics of interacting diffusions on unimodular Galton--Watson trees. \newblock {\em Probab. Theory Related Fields}, 187:817--884, 2023.

\bibitem{lacker-yeung-zhou:24}
D.~Lacker, L.~C.~Yeung and F.~Zhou. \newblock Quantitative propagation of chaos for non-exchangeable diffusions via first-passage percolation. \newblock {\em arXiv:2409.08882}, 2024.

\bibitem{latala:05}
R.~Lata{\l}a. \newblock Some estimates of norms of random matrices. \newblock {\em Proc. Amer. Math. Soc.}, 133(5):1273--1282, 2005.

\bibitem{ledoux-talagrand:91}
M.~Ledoux and M.~Talagrand. \newblock {\em Probability in Banach Spaces. Isoperimetry and Processes}. \newblock Springer, Berlin, 1991.

\bibitem{lepage-woodroofe-zinn:81}
R.~LePage, M.~Woodroofe and J.~Zinn. \newblock Convergence to a stable distribution via order statistics. \newblock {\em Ann. Probab.}, 9(4):624--632, 1981.

\bibitem{lovasz:12}
L.~Lov\'asz. \newblock {\em Large Networks and Graph Limits}. \newblock AMS Colloquium Publications 60, 2012.

\bibitem{lucon:20}
E.~Lu\c{c}on. \newblock Quenched asymptotics for interacting diffusions on inhomogeneous random graphs. \newblock {\em Stochastic Process. Appl.}, 130(11):6783--6842, 2020.

\bibitem{lucon-stannat:14}
E.~Lu\c{c}on and W.~Stannat. \newblock Mean field limit for disordered diffusions with singular interactions. \newblock {\em Ann. Appl. Probab.}, 24(5):1946--1993, 2014.

\bibitem{lynch1987large}
J.~Lynch and J.~Sethuraman. \newblock Large deviations for processes with independent increments. \newblock {\em Ann. Probab.}, 15(2):610--627, 1987.

\bibitem{mann:53}
W.~R.~Mann. \newblock Mean value methods in iteration. \newblock {\em Proc. Amer. Math. Soc.}, 4:506--510, 1953.

\bibitem{martin-siggia-rose:73}
{P.~C.~Martin, E.~D.~Siggia and H.~A.~Rose. \newblock Statistical dynamics of classical systems. \newblock {\em Phys. Rev. A}, 8:423--437, 1973.}

\bibitem{may:72}
R.~M.~May. \newblock Will a large complex system be stable? \newblock {\em Nature}, 238:413--414, 1972.

\bibitem{mckean:66}
H.~P.~McKean. \newblock A class of Markov processes associated with nonlinear parabolic equations. \newblock {\em Proc. Natl. Acad. Sci. USA}, 56:1907--1911, 1966.

\bibitem{medvedev:14}
G.~S.~Medvedev. \newblock The nonlinear heat equation on dense graphs and graph limits. \newblock {\em SIAM J. Math. Anal.}, 46(4):2743--2766, 2014.

\bibitem{mei-montanari-nguyen:18}
{S.~Mei, A.~Montanari and P.-M.~Nguyen. \newblock A mean field view of the landscape of two-layer neural networks. \newblock {\em Proc. Natl. Acad. Sci. USA}, 115(33):E7665--E7671, 2018.}

\bibitem{mezard1987spin}
M.~M\'ezard, G.~Parisi and M.~A.~Virasoro. \newblock {\em Spin Glass Theory and Beyond: An Introduction to the Replica Method and Its Applications}. \newblock World Scientific Lecture Notes in Physics 9, World Scientific, 1987.

\bibitem{mischler-quininao-touboul:16}
S.~Mischler, C.~Qui\~ninao and J.~Touboul. \newblock On a kinetic FitzHugh--Nagumo model of neuronal network. \newblock {\em Comm. Math. Phys.}, 342(3):1001--1042, 2016.

\bibitem{mourier:53}
E.~Mourier. \newblock \'El\'ements al\'eatoires dans un espace de Banach. \newblock {\em Ann. Inst. H. Poincar\'e}, 13(3):161--244, 1953.

\bibitem{moynot-samuelides:02}
O.~Moynot and M.~Samuelides. \newblock Large deviations and mean-field theory for asymmetric random recurrent neural networks. \newblock {\em Probab. Theory Related Fields}, 123(1):41--75, 2002.

\bibitem{oliveira-reis:19}
R.~I.~Oliveira and G.~H.~Reis. \newblock Interacting diffusions on random graphs with diverging degrees: hydrodynamics and large deviations. \newblock {\em J. Stat. Phys.}, 176:1057--1087, 2019.

\bibitem{ostojic:14}
S.~Ostojic. \newblock Two types of asynchronous activity in networks of excitatory and inhibitory spiking neurons. \newblock {\em Nature Neurosci.}, 17:594--600, 2014.

\bibitem{paul-trelat:24}
T.~Paul and E.~Tr\'elat. \newblock Mean field, hydrodynamic and graph limits for deterministic interacting particle systems: a survey with quantitative estimates. \newblock Preprint, arXiv:2209.08832, 2026.

\bibitem{favaro-fortini-peluchetti:20}
S.~Peluchetti, S.~Favaro and S.~Fortini. \newblock Stable behaviour of infinitely wide deep neural networks. \newblock In {\em Proc. 23rd Int. Conf. Artificial Intelligence and Statistics (AISTATS)}, PMLR 108:1137--1146, 2020.

\bibitem{pukhalskii}
A.~A.~Pukhalskii. \newblock On functional principle of large deviations. \newblock In {\em New Trends in Probability and Statistics}, pages 198--218. VSP/Mokslas, 1991.

\bibitem{quininao-touboul:15}
C.~Qui\~ninao and J.~Touboul. \newblock Limits and dynamics of randomly connected neuronal networks. \newblock {\em Acta Appl. Math.}, 136(1):167--192, 2015.

\bibitem{quininao-touboul:20}
C.~Qui\~ninao and J.~Touboul. \newblock Clamping and synchronization in the strongly coupled FitzHugh--Nagumo model. \newblock {\em SIAM J. Appl. Dyn. Syst.}, 19(2), 2020. doi:10.1137/19M1283884.

\bibitem{quininao-touboul:25}
{C.~Qui\~ninao and J.~Touboul. \newblock Balanced dynamics in strongly coupled networks. \newblock {\em SIAM J. Appl. Dyn. Syst.}, 25(2), 2026. doi:10.1137/25M173140X.}

\bibitem{rajan-abbott:06}
K.~Rajan and L.~F.~Abbott. \newblock Eigenvalue spectra of random matrices for neural networks. \newblock {\em Phys. Rev. Lett.}, 97:188104, 2006.

\bibitem{ramanan:23}
K.~Ramanan. \newblock Interacting processes on sparse graphs. \newblock In {\em Proceedings of the International Congress of Mathematicians 2022}, Vol.~6, pages 4394--4425. EMS Press, 2023.

\bibitem{renart-etal:10}
{A.~Renart, J.~de~la~Rocha, P.~Bartho, L.~Hollender, N.~Parga, A.~Reyes and K.~D.~Harris. \newblock The asynchronous state in cortical circuits. \newblock {\em Science}, 327(5965):587--590, 2010.}

\bibitem{rosenbaum-doiron:14}
{R.~Rosenbaum and B.~Doiron. \newblock Balanced networks of spiking neurons with spatially dependent recurrent connections. \newblock {\em Phys. Rev. X}, 4:021039, 2014.}

\bibitem{samorodnitsky-taqqu:94}
G.~Samorodnitsky and M.~S.~Taqqu. \newblock {\em Stable Non-Gaussian Random Processes: Stochastic Models with Infinite Variance}. \newblock Chapman \& Hall, 1994.

\bibitem{cessac-samuelides:07b}
M.~Samuelides and B.~Cessac. \newblock Random recurrent neural networks dynamics. \newblock {\em Eur. Phys. J. Special Topics}, 142(1):89--122, 2007.

\bibitem{sherrington-kirkpatrick:75}
D.~Sherrington and S.~Kirkpatrick. \newblock Solvable model of a spin-glass. \newblock {\em Phys. Rev. Lett.}, 35(26):1792--1796, 1975.

\bibitem{shkolnikov-yeung:26}
M.~Shkolnikov and L.~C.~Yeung. \newblock Universal central limit theorem for non-exchangeable interacting diffusions. \newblock {\em arXiv:2607.07598}, 2026.

\bibitem{sirignano-spiliopoulos:20}
J.~Sirignano and K.~Spiliopoulos. \newblock Mean field analysis of neural networks: a law of large numbers. \newblock {\em SIAM J. Appl. Math.}, 80(2):725--752, 2020.

\bibitem{smirnov:39}
N.~V.~Smirnov. \newblock On the estimation of the discrepancy between empirical curves of distribution for two independent samples. \newblock {\em Bulletin Math\'ematique de l'Universit\'e de Moscou}, 2(2):3--14, 1939.

\bibitem{sompolinsky-crisanti-etal:88}
H.~Sompolinsky, A.~Crisanti and H.~J.~Sommers. \newblock Chaos in random neural networks. \newblock {\em Phys. Rev. Lett.}, 61(3):259--262, 1988.

\bibitem{sompolinsky-zippelius:82}
H.~Sompolinsky and A.~Zippelius. \newblock Relaxational dynamics of the Edwards--Anderson model and the mean-field theory of spin-glasses. \newblock {\em Phys. Rev. B}, 25(11):6860--6875, 1982.

\bibitem{song-etal:05}
S.~Song, P.~J.~Sj\"ostr\"om, M.~Reigl, S.~Nelson and D.~B.~Chklovskii. \newblock Highly nonrandom features of synaptic connectivity in local cortical circuits. \newblock {\em PLoS Biol.}, 3(3):e68, 2005.

\bibitem{soshnikov:04}
A.~Soshnikov. \newblock Poisson statistics for the largest eigenvalues of Wigner random matrices with heavy tails. \newblock {\em Electron. Commun. Probab.}, 9:82--91, 2004.

\bibitem{stern-sompolinsky-abbott:14}
M.~Stern, H.~Sompolinsky and L.~F.~Abbott. \newblock Dynamics of random neural networks with bistable units. \newblock {\em Phys. Rev. E}, 90(6):062710, 2014.

\bibitem{strogatz2000kuramoto}
S.~H.~Strogatz. \newblock From Kuramoto to Crawford: exploring the onset of synchronization in populations of coupled oscillators. \newblock {\em Physica D}, 143:1--20, 2000.

\bibitem{strogatzSync}
S.~H.~Strogatz. \newblock {\em Sync: The Emerging Science of Spontaneous Order}. \newblock Hyperion, 2003.

\bibitem{sznitman:89}
A.-S.~Sznitman. \newblock Topics in propagation of chaos. \newblock In {\em \'Ecole d'\'Et\'e de Probabilit\'es de Saint-Flour XIX---1989}, Lecture Notes in Math. 1464, pages 165--251. Springer, 1991.

\bibitem{talagrand:11}
M.~Talagrand. \newblock {\em Mean Field Models for Spin Glasses}, Vols.~I--II. \newblock Springer, 2011.

\bibitem{tao-vu:10}
T.~Tao and V.~Vu. \newblock Random matrices: universality of ESDs and the circular law. \newblock {\em Ann. Probab.}, 38(5):2023--2065, 2010.

\bibitem{touboul-multiscale}
J.~Touboul. \newblock Spatially extended networks with singular multi-scale connectivity patterns. \newblock {\em J. Stat. Phys.}, 156(3):546--573, 2014.

\bibitem{touboul:14}
J.~Touboul. \newblock Propagation of chaos in neural fields. \newblock {\em Ann. Appl. Probab.}, 24(3):1298--1328, 2014.

\bibitem{vanmeegen-kuhn-helias:21}
A.~van~Meegen, T.~K\"uhn and M.~Helias. \newblock Large-deviation approach to random recurrent neuronal networks: parameter inference and fluctuation-induced transitions. \newblock {\em Phys. Rev. Lett.}, 127(15):158302, 2021.

\bibitem{vanvreeswijk-sompolinsky:96}
C.~van~Vreeswijk and H.~Sompolinsky. \newblock Chaos in neuronal networks with balanced excitatory and inhibitory activity. \newblock {\em Science}, 274:1724--1726, 1996.

\bibitem{vanvreeswijk-sompolinsky:98}
C.~van~Vreeswijk and H.~Sompolinsky. \newblock Chaotic balanced state in a model of cortical circuits. \newblock {\em Neural Comput.}, 10:1321--1371, 1998.

\bibitem{vershynin:18}
R.~Vershynin. \newblock {\em High-Dimensional Probability: An Introduction with Applications in Data Science}. \newblock Cambridge University Press, 2018.

\bibitem{villani:09}
C.~Villani. \newblock {\em Optimal Transport: Old and New}. \newblock Grundlehren Math. Wiss. 338, Springer, 2009.

\bibitem{wardak-gong:21}
A.~Wardak and P.~Gong. \newblock Fractional diffusion theory of balanced heterogeneous neural networks. \newblock {\em Phys. Rev. Research}, 3(1):013083, 2021.

\bibitem{wardak-gong:22}
A.~Wardak and P.~Gong. \newblock Extended Anderson criticality in heavy-tailed neural networks. \newblock {\em Phys. Rev. Lett.}, 129:048103, 2022.

\bibitem{xie-mihalas-kusmierz:25}
E.~Xie, S.~Mihalas and {\leavevmode\setbox0\hbox{L}\hbox to\wd0{\hss\char32L}}.~Ku\'smierz. \newblock Slow transition to low-dimensional chaos in heavy-tailed recurrent neural networks. \newblock In {\em Advances in Neural Information Processing Systems (NeurIPS 2025)}, 2025. arXiv:2505.09816.

\bibitem{yin-bai-krishnaiah:88}
{Y.~Q.~Yin, Z.~D.~Bai and P.~R.~Krishnaiah. \newblock On the limit of the largest eigenvalue of the large dimensional sample covariance matrix. \newblock {\em Probab. Theory Related Fields}, 78(4):509--521, 1988.}

\end{thebibliography}
\end{document}